\theoremstyle{plain}
\newtheorem{theorem}{Theorem}[section]
\newtheorem{lemma}[theorem]{Lemma}
\theoremstyle{remark}
\newtheorem{remark}[theorem]{Remark}
\numberwithin{equation}{section}
\begin{document}

\title[Heat equation in a moving thin domain]{Error estimate for classical solutions to the heat equation in a moving thin domain and its limit equation}

\author[T.-H. Miura]{Tatsu-Hiko Miura}
\address{Graduate School of Science and Technology, Hirosaki University, 3, Bunkyo-cho, Hirosaki-shi, Aomori, 036-8561, Japan}
\email{thmiura623@hirosaki-u.ac.jp}

\subjclass[2020]{Primary: 35K05, Secondary: 35B25, 35R01, 35R37}

\keywords{heat equation, moving thin domain, uniform a priori estimate}

\begin{abstract}
  We consider the Neumann type problem of the heat equation in a moving thin domain around a given closed moving hypersurface.
  The main result of this paper is an error estimate in the sup-norm for classical solutions to the thin domain problem and a limit equation on the moving hypersurface which appears in the thin-film limit of the heat equation.
  To prove the error estimate, we show a uniform a priori estimate for a classical solution to the thin domain problem based on the maximum principle.
  Moreover, we construct a suitable approximate solution to the thin domain problem from a classical solution to the limit equation based on an asymptotic expansion of the thin domain problem and apply the uniform a priori estimate to the difference of the approximate solution and a classical solution to the thin domain problem.
\end{abstract}

\maketitle
\section{Introduction} \label{S:Intro}

\subsection{Problem settings and main results} \label{SS:In_Prob}
For $t\in[0,T]$, $T>0$, let $\Gamma(t)$ be a given closed moving hypersurface in $\mathbb{R}^n$, $n\geq2$ with unit outward normal vector field $\nu(\cdot,t)$.
Also, let $g_0(\cdot,t)$ and $g_1(\cdot,t)$ be functions on $\Gamma(t)$ such that $g(y,t)=g_1(y,t)-g_0(y,t)\geq c$ for all $y\in\Gamma(t)$ with some constant $c>0$ independent of $t$.
For a sufficiently small $\varepsilon>0$, we define a moving thin domain
\begin{align*}
  \Omega_\varepsilon(t) = \{y+r\nu(y,t) \mid y\in\Gamma(t), \, \varepsilon g_0(y,t)<r<\varepsilon g_1(y,t)\}, \quad t\in[0,T]
\end{align*}
and consider the Neumann type problem of the heat equation (a linear diffusion equation)
\begin{align} \label{E:Heat_MTD}
  \left\{
  \begin{alignedat}{3}
    \partial_t\rho^\varepsilon-k_d\Delta\rho^\varepsilon &= f^\varepsilon &\quad &\text{in} &\quad &Q_{\varepsilon,T}, \\
    \partial_{\nu_\varepsilon}\rho^\varepsilon+k_d^{-1}V_\varepsilon\rho^\varepsilon &= 0 &\quad &\text{on} &\quad &\partial_\ell Q_{\varepsilon,T}, \\
    \rho^\varepsilon|_{t=0} &= \rho_0^\varepsilon &\quad &\text{in} &\quad &\Omega_\varepsilon(0).
  \end{alignedat}
  \right.
\end{align}
Here $Q_{\varepsilon,T}$ and $\partial_\ell Q_{\varepsilon,T}$ are a space-time domain and its lateral boundary given by
\begin{align*}
  Q_{\varepsilon,T} = \bigcup_{t\in(0,T]}\Omega_\varepsilon(t)\times\{t\}, \quad \partial_\ell Q_{\varepsilon,T} = \bigcup_{t\in(0,T]}\partial\Omega_\varepsilon(t)\times\{t\},
\end{align*}
and $\nu_\varepsilon$ and $V_\varepsilon$ are the unit outward normal vector field and the (scalar) outer normal velocity of $\partial\Omega_\varepsilon(t)$.
Moreover, $k_d>0$ is a diffusion coefficient independent of $\varepsilon$ and $f^\varepsilon$ and $\rho_0^\varepsilon$ are a given source term and initial data which may depend on $\varepsilon$.
Note that the term $k_d^{-1}V_\varepsilon\rho^\varepsilon$ is added in the boundary condition so that the conservation law
\begin{align*}
  \frac{d}{dt}\int_{\Omega_{\varepsilon}(t)}\rho^\varepsilon\,dx = \int_{\Omega_\varepsilon(t)}f^\varepsilon\,dx, \quad t\in(0,T)
\end{align*}
holds by the Reynolds transport theorem and integration by parts.
It can be also seen that the heat (or a substance) moves along the boundary so that it does not go out of or come into the moving thin domain through the boundary.

The purpose of this paper is to compare a solution to \eqref{E:Heat_MTD} with a solution to
\begin{align} \label{E:Limit}
  \left\{
  \begin{alignedat}{3}
    \partial^\circ(g\eta)-gV_\Gamma H\eta-k_d\,\mathrm{div}_\Gamma(g\nabla_\Gamma\eta) &= gf &\quad &\text{on} &\quad &S_T, \\
    \eta|_{t=0} &= \eta_0 &\quad &\text{on} &\quad &\Gamma(0),
  \end{alignedat}
  \right.
\end{align}
which we call the limit equation of \eqref{E:Heat_MTD}.
Here
\begin{align*}
  S_T = \bigcup_{t\in(0,T]}\Gamma(t)\times\{t\}
\end{align*}
is a space-time hypersurface, $\partial^\circ=\partial_t+V_\Gamma\nu\cdot\nabla$ is the normal time derivative, $V_\Gamma$ is the (scalar) outer normal velocity of $\Gamma(t)$, $H$ is the mean curvature of $\Gamma(t)$ which is just the sum of the principal curvatures of $\Gamma(t)$, and $\mathrm{div}_\Gamma$ and $\nabla_\Gamma$ are the surface divergence and the tangential gradient on $\Gamma(t)$.
Also, $f$ and $\eta_0$ are a given source term and initial data.

In our previous work \cite{Miu17}, we studied the thin-film limit problem for \eqref{E:Heat_MTD} with $k_d=1$ and $f^\varepsilon\equiv0$ in the $L^2$-framework and derived \eqref{E:Limit} from \eqref{E:Heat_MTD} rigorously by means of the convergence of a solution and the characterization of the limit.
For an $L^2$-weak solution $\rho^\varepsilon$ to \eqref{E:Heat_MTD}, we proved that the average of $\rho^\varepsilon$ in the thin direction converges weakly to a function $\eta$ on $S_T$ in an appropriate function space on $S_T$ as $\varepsilon\to0$.
Moreover, we derived a weak form on $S_T$ satisfied by $\eta$ from the average of the weak form of \eqref{E:Heat_MTD} and obtained the limit equation \eqref{E:Limit} as an equation to which the weak limit $\eta$ is a unique $L^2$-weak solution.
We also estimated the difference of $\eta$ and the average of $\rho^\varepsilon$ on $S_T$ by a standard energy method and used it to get an $L^2$-error estimate in $Q_{\varepsilon,T}$ for $\rho^\varepsilon$ and the constant extension of $\eta$ in the normal direction of $\Gamma(t)$.

The above result shows that a solution to the limit equation \eqref{E:Limit} approximates a solution to the thin domain problem \eqref{E:Heat_MTD} in the $L^2$-framework.
However, the error estimate in the $L^2(Q_{\varepsilon,T})$-norm may have some ambiguity since the volume of $Q_{\varepsilon,T}$ is of order $\varepsilon$.
One approach to avoid such an ambiguity is to divide the $L^2(Q_{\varepsilon,T})$-norm by $\varepsilon^{1/2}$.
In fact, the error estimate obtained in \cite[Theorem 6.12]{Miu17} shows that
\begin{align} \label{E:Error_L2}
  \varepsilon^{-1/2}\|\rho^\varepsilon-\bar{\eta}\|_{L^2(Q_{\varepsilon,T})} \leq c\Bigl(\varepsilon^{-1/2}\|\rho_0^\varepsilon-\bar{\eta}_0\|_{L^2(\Omega_\varepsilon(0))}+\varepsilon\|\eta_0\|_{L^2(\Gamma(0))}\Bigr)
\end{align}
for weak solutions $\rho^\varepsilon$ to \eqref{E:Heat_MTD} and $\eta$ to \eqref{E:Limit} when $k_d=1$, $f^\varepsilon\equiv0$, and $f\equiv0$, where $\bar{\eta}(\cdot,t)$ is the constant extension of $\eta(\cdot,t)$ in the normal direction of $\Gamma(t)$ for each $t\in[0,T]$.
Hence we can say that $\eta$ approximates $\rho_\varepsilon$ of order $\varepsilon$ in the $L^2$-framework.

The purpose of this paper is to give another approach to avoid the ambiguity due to the volume of $Q_{\varepsilon,T}$: an error estimate in the sup-norm.
We estimate the difference of classical solutions to \eqref{E:Heat_MTD} and \eqref{E:Limit} in the sup-norm on $Q_{\varepsilon,T}$.
It seems that such an attempt for a curved thin domain around a hypersurface is first done in this paper even if a curved thin domain does not move in time, although a similar idea has already appeared in the case of a stationary flat thin domain around a lower dimensional domain (see e.g. \cite{MaNaPl00_02}).

To state our main results, we give some definitions and notations.
For given data $\rho_0^\varepsilon$ and $f^\varepsilon$, we say that $\rho^\varepsilon$ is a classical solution to \eqref{E:Heat_MTD} if
\begin{align*}
  \rho^\varepsilon \in C(\overline{Q_{\varepsilon,T}}), \quad \partial_i\rho^\varepsilon \in C(Q_{\varepsilon,T}\cup\partial_\ell Q_{\varepsilon,T}), \quad \partial_t\rho^\varepsilon, \partial_i\partial_j\rho^\varepsilon \in C(Q_{\varepsilon,T})
\end{align*}
for all $i,j=1,\dots,n$ and $\rho^\varepsilon$ satisfies \eqref{E:Heat_MTD} at each point of $\overline{Q_{\varepsilon,T}}$.
Also, for given data $\eta_0$ and $f$, we say that $\eta$ is a classical solution to \eqref{E:Limit} if $\eta\in C(\overline{S_T})\cap C^{2,1}(S_T)$ and $\eta$ satisfies \eqref{E:Limit} at each point of $\overline{S_T}$, where
\begin{align*}
  C^{2,1}(S_T) = \{\zeta\in C(S_T) \mid \text{$\partial^\circ\zeta,\underline{D}_i\zeta,\underline{D}_i\underline{D}_j\zeta\in C(S_T)$ for all $i,j=1,\dots,n$}\}
\end{align*}
and $\underline{D}_i$ is the $i$-th component of $\nabla_\Gamma$ (see Section \ref{SS:FixSurf}).
Here we do not touch the problem of the existence of classical solutions.
It is known that there exists a classical solution to \eqref{E:Heat_MTD} if given data have a sufficient H\"{o}lder regularity (see e.g. \cite{LaSoUr68,Lie96}).
Also, the existence of a classical solution to \eqref{E:Limit} for sufficiently smooth given data can be shown by a standard localization argument and the existence result in the case of a flat domain, although there seems to be no literature giving the procedure explicitly in the case of a moving surface.
We also note that classical solutions to \eqref{E:Heat_MTD} and \eqref{E:Limit} are unique by the maximum principle.

Let us fix some more notations.
Let $\Omega$ be a spacial domain or hypersurface in $\mathbb{R}^n$ or a space-time domain or hypersurface in $\mathbb{R}^{n+1}$.
For a continuous function $\rho$ on $\Omega$ or on the closure $\overline{\Omega}$, we write
\begin{align*}
  \|\rho\|_{C(\Omega)} = \sup_{\Omega}|\rho|, \quad \|\rho\|_{C(\overline{\Omega})} = \sup_{\overline{\Omega}}|\rho|.
\end{align*}
Note that $\overline{\Omega}$ includes the initial time $t=0$ when $\Omega=Q_{\varepsilon,T},S_T$.
Also, we set
\begin{align} \label{E:C21_Norm}
  \|\eta\|_{C^{2,1}(S_T)} = \|\eta\|_{C(S_T)}+\|\partial^\circ\eta\|_{C(S_T)}+\sum_{i=1}^n\|\underline{D}_i\eta\|_{C(S_T)}+\sum_{i,j=1}^n\|\underline{D}_i\underline{D}_j\eta\|_{C(S_T)}
\end{align}
for $\eta\in C^{2,1}(S_T)$.
We take a constant $\delta>0$ such that $\Gamma(t)$ has a tubular neighborhood of radius $\delta$ in $\mathbb{R}^n$ for all $t\in[0,T]$ (see Section \ref{SS:MovSurf}), and fix $\varepsilon_0\in(0,1)$ such that $\varepsilon|g_i|\leq\delta$ on $\overline{S_T}$ for all $\varepsilon\in(0,\varepsilon_0)$ and $i=0,1$.
Moreover, for a function $\zeta$ on $\overline{S_T}$ and each $t\in[0,T]$, we denote by $\bar{\zeta}(\cdot,t)$ the constant extension of $\zeta(\cdot,t)$ in the normal direction of $\Gamma(t)$.

Now let us state our main results.
First we give an error estimate when $\Omega_\varepsilon(t)$ is a thin tubular neighborhood for each $t\in[0,T]$.

\begin{theorem} \label{T:Error_Spe}
  Let $\varepsilon\in(0,\varepsilon_0)$ and
  \begin{align*}
    \rho_0^\varepsilon\in C(\overline{\Omega_\varepsilon(0)}), \quad f^\varepsilon\in C(Q_{\varepsilon,T}), \quad \eta_0\in C(\Gamma(0)), \quad f\in C(S_T).
  \end{align*}
  Also, let $\rho^\varepsilon$ and $\eta$ be classical solutions to \eqref{E:Heat_MTD} and \eqref{E:Limit}.
  Suppose that
  \begin{itemize}
    \item[(a)] $g_0=g_0(t)$ and $g_1=g_1(t)$ depend only on $t\in[0,T]$.
  \end{itemize}
  Then there exists a constant $c_T>0$ depending on $T$ but independent of $\varepsilon$ such that
  \begin{align} \label{E:Error_Spe}
    \begin{aligned}
      \|\rho^\varepsilon-\bar{\eta}\|_{C(\overline{Q_{\varepsilon,T}})} &\leq c_T\Bigl(\|\rho_0^\varepsilon-\bar{\eta}_0\|_{C(\overline{\Omega_\varepsilon(0)})}+\|f^\varepsilon-\bar{f}\|_{C(Q_{\varepsilon,T})}\Bigr) \\
      &\qquad +\varepsilon c_T\Bigl(\|\eta_0\|_{C(\Gamma(0))}+\|\eta\|_{C^{2,1}(S_T)}\Bigr)
    \end{aligned}
  \end{align}
  provided that the right-hand side is finite.
\end{theorem}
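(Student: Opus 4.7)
My plan is to construct an approximate solution $\rho^{\mathrm{app}}$ to \eqref{E:Heat_MTD} from the classical solution $\eta$ of \eqref{E:Limit} whose residuals in the equation, the boundary condition and the initial condition are of order $\varepsilon\|\eta\|_{C^{2,1}(S_T)}$, then apply the uniform a priori estimate (established earlier in the paper from the maximum principle) to the difference $\rho^\varepsilon - \rho^{\mathrm{app}}$, and finally combine this with $\|\rho^{\mathrm{app}} - \bar{\eta}\|_{C(\overline{Q_{\varepsilon,T}})} = O(\varepsilon)\|\eta\|_{C(S_T)}$ via the triangle inequality to obtain \eqref{E:Error_Spe}.

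Explicitly I take
\[
  \rho^{\mathrm{app}}(x,t) = \bar{\eta}(x,t) + r(x,t)\,\bar{\phi}(x,t) + \frac{1}{2}\, r(x,t)^2\,\bar{\psi}(x,t),
\]
where $r(x,t)$ is the signed distance from $x$ to $\Gamma(t)$ (so $\varepsilon g_0(t) < r < \varepsilon g_1(t)$ in $\Omega_\varepsilon(t)$) and $\phi,\psi$ are functions on $S_T$ to be chosen. Assumption~(a) is decisive here: since $g_0,g_1$ depend only on $t$, each lateral face $\{r = \varepsilon g_i(t)\}$ is a parallel hypersurface to $\Gamma(t)$, so $\nu_\varepsilon = \pm\nu$ and $V_\varepsilon = \pm V_\Gamma + O(\varepsilon)$ on $\partial_\ell Q_{\varepsilon,T}$. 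Thanks to this symmetry, the leading-order boundary-residual conditions on the inner and outer lateral faces agree, both forcing $\phi = -k_d^{-1}\,V_\Gamma\,\eta$. Having fixed $\phi$, I then pick $\psi$ to kill the remaining $O(1)$ part of the interior residual: using \eqref{E:Limit}, which under assumption~(a) simplifies to $\partial^\circ\eta + (g'/g - V_\Gamma H)\eta - k_d\Delta_\Gamma\eta = f$, a direct computation yields $\psi = k_d^{-1}\bigl(2 V_\Gamma H - g'/g + k_d^{-1} V_\Gamma^2\bigr)\eta$.

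The residual bounds themselves rest on standard identities in the tubular neighborhood of the moving surface $\Gamma(t)$: $\partial_t r = -\overline{V_\Gamma}$; $\partial_t\bar{\zeta} = \overline{\partial^\circ\zeta} + O(r)\|\zeta\|_{C^1(S_T)}$ for the constant normal extension of any $\zeta$; and the tubular decomposition $\Delta = \partial_r^2 + H_r\,\partial_r + \Delta_{\Gamma_r}$ with $H_r = H + O(r)$, which gives $\Delta\bar{\zeta} = \overline{\Delta_\Gamma\zeta} + O(r)\|\zeta\|_{C^2(S_T)}$. Substituting $\rho^{\mathrm{app}}$ into \eqref{E:Heat_MTD}, expanding each contribution from $\bar{\eta}$, $r\bar{\phi}$, and $\frac{1}{2} r^2\bar{\psi}$ term by term, and invoking the limit equation for $\eta$ to cancel the leading $\bar{\eta}$-part, one obtains
\[
  \partial_t\rho^{\mathrm{app}} - k_d\Delta\rho^{\mathrm{app}} = \bar{f} + O(\varepsilon)\|\eta\|_{C^{2,1}(S_T)} \quad \text{in } Q_{\varepsilon,T},
\]
\[
  \partial_{\nu_\varepsilon}\rho^{\mathrm{app}} + k_d^{-1} V_\varepsilon \rho^{\mathrm{app}} = O(\varepsilon)\|\eta\|_{C^{2,1}(S_T)} \quad \text{on } \partial_\ell Q_{\varepsilon,T},
\]
and $\rho^{\mathrm{app}}(\cdot,0) = \bar{\eta}_0 + O(\varepsilon)\|\eta_0\|_{C(\Gamma(0))}$ in $\Omega_\varepsilon(0)$. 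Applying the uniform a priori estimate to $w = \rho^\varepsilon - \rho^{\mathrm{app}}$ then bounds $\|w\|_{C(\overline{Q_{\varepsilon,T}})}$ by $c_T$ times the sum of the initial, source and boundary residuals, and the triangle inequality together with $\|\rho^{\mathrm{app}} - \bar{\eta}\|_{C(\overline{Q_{\varepsilon,T}})} \leq c\,\varepsilon\,\|\eta\|_{C(S_T)}$ then yields \eqref{E:Error_Spe}.

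I expect the principal technical obstacle to be the tubular-coordinate bookkeeping: differentiating the products $r\bar{\phi}$ and $r^2\bar{\psi}$ in both space and time up to $O(\varepsilon)$ accuracy using the identities above, and checking the delicate consistency of the boundary-residual cancellation on the two lateral faces. That consistency is exactly what assumption~(a) buys: if $g_i$ depended on $y$, the lateral faces would no longer be parallel to $\Gamma(t)$, the unit normal $\nu_\varepsilon$ would acquire a tangential component of size $O(\varepsilon)$, and the inner and outer leading-order conditions on $\phi$ would become incompatible, so that a richer ansatz would be needed in the general case.
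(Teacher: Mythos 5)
Your high-level plan (build an approximate solution from $\eta$, apply the maximum-principle a priori estimate to the difference, finish by the triangle inequality) matches the paper's strategy exactly, and the leading-order coefficient $\phi=-k_d^{-1}V_\Gamma\eta$ is correct. But there is a quantitative gap that invalidates the conclusion as written: the uniform a priori estimate from Theorem~\ref{T:Uni_Heat} carries a factor $\varepsilon^{-1}$ on the boundary-residual norm, and Remark~\ref{R:PM_UapE} shows that this factor cannot be removed. Hence a boundary residual of size $O(\varepsilon)\|\eta\|_{C^{2,1}(S_T)}$, which is what you claim for $\rho^{\mathrm{app}}$, only yields an $O(1)$ bound on $\|\rho^\varepsilon-\rho^{\mathrm{app}}\|_{C(\overline{Q_{\varepsilon,T}})}$ after division by $\varepsilon$. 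You need the boundary residual to be $O(\varepsilon^2)$.

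The ansatz $\rho^{\mathrm{app}}=\bar{\eta}+r\bar{\phi}+\tfrac12 r^2\bar{\psi}$ with $\phi,\psi$ independent of $\varepsilon$ cannot deliver that. Indeed, with $\phi=-k_d^{-1}V_\Gamma\eta$ fixed by the $\varepsilon^0$ boundary condition and $\psi$ fixed by the $\varepsilon^0$ interior equation (which, via the limit equation, forces $\psi=k_d^{-1}(-g'/g+k_d^{-1}V_\Gamma^2)\eta$; your extra $2V_\Gamma H$ is a sign slip from the paper's convention $H=-\mathrm{div}_\Gamma\nu$, which gives $\Delta=\partial_r^2-H_r\partial_r+\Delta_{\Gamma_r}$, not $+H_r\partial_r$), the order-$\varepsilon$ part of the boundary residual on $\Gamma_\varepsilon^i(t)$ is $(-1)^{i+1}\varepsilon\,k_d^{-1}\bigl(g_i'-g_i g'/g\bigr)\eta = (-1)^{i+1}\varepsilon\,k_d^{-1}\tfrac{g_1g_0'-g_0g_1'}{g}\eta$. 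This is generically nonzero under assumption~(a) (it vanishes only when $g_1/g_0$ is constant in $t$). The paper absorbs exactly this term by enlarging the ansatz: in \eqref{E:PM_ee12} the linear-in-$d$ coefficient is not merely $-k_d^{-1}\overline{V_\Gamma\eta}$ but $-k_d^{-1}\overline{V_\Gamma\eta}+\varepsilon\bigl(\bar g_1\bar\zeta_0-\bar g_0\bar\zeta_1\bigr)$, where $\zeta_i$ is given in \eqref{E:PM_zeta}; under~(a) this correction equals $-\varepsilon k_d^{-1}\tfrac{g_1g_0'-g_0g_1'}{g}\eta$ and its normal derivative cancels the offending boundary term. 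Once you add that $\varepsilon$-weighted linear piece (and use the corrected $\psi$), the rest of your outline goes through and reproduces the paper's proof.
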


Note that we leave the $C^{2,1}(S_T)$-norm of $\eta$ in the right-hand side of \eqref{E:Error_Spe}, which cannot be estimated just by the sup-norms of $\eta_0$ and $f$.
It can be estimated by the H\"{o}lder norms of $\eta_0$ and $f$ if they are sufficiently smooth by a localization argument and a regularity estimate for a classical solution in the case of a flat domain (see e.g. \cite{LaSoUr68,Lie96}).

When $\Omega_\varepsilon(t)$ is not a tubular neighborhood, i.e. $g_0$ and $g_1$ are not constant in the variable $y\in\Gamma(t)$, we require an additional regularity assumption on $\eta$.

\begin{theorem} \label{T:Error_Gen}
  Under the settings of Theorem \ref{T:Error_Spe}, suppose instead of \emph{(a)} that
  \begin{itemize}
    \item[(b)] $h_i=\nabla_\Gamma g_i\cdot\nabla_\Gamma\eta \in C(\overline{S_T})\cap C^{2,1}(S_T)$ for $i=0,1$.
  \end{itemize}
  Then there exists a constant $c_T>0$ depending on $T$ but independent of $\varepsilon$ such that
  \begin{align} \label{E:Error_Gen}
    \begin{aligned}
      \|\rho^\varepsilon-\bar{\eta}\|_{C(\overline{Q_{\varepsilon,T}})} &\leq c_T\Bigl(\|\rho_0^\varepsilon-\bar{\eta}_0\|_{C(\overline{\Omega_\varepsilon(0)})}+\|f^\varepsilon-\bar{f}\|_{C(Q_{\varepsilon,T})}\Bigr) \\
      &\qquad +\varepsilon c_T\Bigl(\|\eta_0\|_{C(\Gamma(0))}+\|\eta\|_{C^{2,1}(S_T)}\Bigr) \\
      &\qquad +\varepsilon c_T\sum_{i=0,1}\Bigl(\|h_i(\cdot,0)\|_{C(\Gamma(0))}+\|h_i\|_{C^{2,1}(S_T)}\Bigr)
    \end{aligned}
  \end{align}
  provided that the right-hand side is finite.
\end{theorem}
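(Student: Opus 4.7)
The plan is to follow the same three-step strategy already used for Theorem~\ref{T:Error_Spe}: (i) construct a suitable approximate solution $\rho^\varepsilon_{\mathrm{app}}$ of \eqref{E:Heat_MTD} out of $\eta$, (ii) estimate the residuals it leaves in the PDE, in the lateral boundary condition, and in the initial condition, and (iii) apply the uniform sup-norm a priori estimate established earlier in the paper to the difference $\rho^\varepsilon-\rho^\varepsilon_{\mathrm{app}}$. Because that a priori estimate carries a factor of order $\varepsilon^{-1}$ in front of the lateral boundary datum, a useful approximate solution must leave an $O(\varepsilon^2)$ lateral boundary residual together with $O(\varepsilon)$ residuals in the PDE and in the initial data, all of them controlled by the norms appearing on the right-hand side of \eqref{E:Error_Gen}.

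The reason the construction used for Theorem~\ref{T:Error_Spe} no longer suffices is that the outer unit normal of $\partial\Omega_\varepsilon(t)$ at the lateral boundary $r=\varepsilon g_i$ expands as $\nu_\varepsilon = \pm\nu\mp\varepsilon\,\overline{\nabla_\Gamma g_i}+O(\varepsilon^2)$, so on the constant normal extension $\bar\eta$ one has $\partial_{\nu_\varepsilon}\bar\eta = \mp\varepsilon\,\bar h_i+O(\varepsilon^2)$ at $r=\varepsilon g_i$, and unless $h_i\equiv0$ this spoils the required $O(\varepsilon^2)$ boundary residual. To absorb the $\bar h_i$ contributions I would take, in the tubular coordinates of Section~\ref{SS:MovSurf}, the ansatz
\begin{align*}
  \rho^\varepsilon_{\mathrm{app}}(y+r\nu(y,t),t) = \bar\eta + r\bar\phi_1 + \tfrac{r^2}{2}\bar\phi_2 + \varepsilon r\bar\phi_3,
\end{align*}
with $\phi_1,\phi_2,\phi_3$ functions on $\overline{S_T}$ to be determined order by order. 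The $O(1)$ part of the boundary condition, which is the same at $r=\varepsilon g_0$ and at $r=\varepsilon g_1$, fixes $\phi_1 = -k_d^{-1}V_\Gamma\eta$; the $O(\varepsilon)$ parts at $r=\varepsilon g_0$ and $r=\varepsilon g_1$ constitute two independent linear equations in $(\phi_2,\phi_3)$ whose coefficients involve $g_i$, $h_i$, $\partial^\circ g_i$, $V_\Gamma$ and $\eta$, and which are uniquely solvable thanks to $g=g_1-g_0\geq c>0$. One then has to check that this $\phi_2$ coincides with the one forced by the leading-order interior equation $\partial^\circ\eta-k_d(\phi_2+H\phi_1+\Delta_\Gamma\eta)=f$; this compatibility condition turns out to be precisely the limit equation \eqref{E:Limit}, so the ansatz closes automatically.

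With $\rho^\varepsilon_{\mathrm{app}}$ so defined, direct computation in tubular coordinates yields $\|\rho^\varepsilon_{\mathrm{app}}-\bar\eta\|_{C(\overline{Q_{\varepsilon,T}})} \leq \varepsilon c\bigl(\|\eta\|_{C^{2,1}(S_T)}+\sum_{i=0,1}\|h_i\|_{C^{2,1}(S_T)}\bigr)$ together with bounds on the three residuals
\begin{align*}
  R_1 := \partial_t\rho^\varepsilon_{\mathrm{app}}-k_d\Delta\rho^\varepsilon_{\mathrm{app}}-\bar f, \quad R_2 := \partial_{\nu_\varepsilon}\rho^\varepsilon_{\mathrm{app}}+k_d^{-1}V_\varepsilon\rho^\varepsilon_{\mathrm{app}}, \quad R_3 := \rho^\varepsilon_{\mathrm{app}}|_{t=0}-\bar\eta_0,
\end{align*}
of sizes $\|R_1\|=O(\varepsilon)$, $\|R_2\|=O(\varepsilon^2)$, $\|R_3\|=O(\varepsilon)$ in the relevant sup-norms, with constants controlled by $\|\eta\|_{C^{2,1}(S_T)}$, $\|\eta_0\|_{C(\Gamma(0))}$, $\sum_i\|h_i\|_{C^{2,1}(S_T)}$ and $\sum_i\|h_i(\cdot,0)\|_{C(\Gamma(0))}$. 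The difference $w^\varepsilon := \rho^\varepsilon-\rho^\varepsilon_{\mathrm{app}}$ is a classical solution of \eqref{E:Heat_MTD} with interior source $f^\varepsilon-\bar f-\varepsilon R_1$, boundary datum $-\varepsilon^2 R_2$ and initial datum $\rho_0^\varepsilon-\bar\eta_0-\varepsilon R_3$; applying the a priori estimate turns the $\varepsilon^2$ in front of $R_2$ into a single $\varepsilon$, and a triangle inequality with the bound on $\rho^\varepsilon_{\mathrm{app}}-\bar\eta$ delivers \eqref{E:Error_Gen}.

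The main obstacle I anticipate is the bookkeeping of the tubular-coordinate expansion at order $\varepsilon^2$: the Laplacian has to be expanded with the mean curvature of $\Gamma(t)$ and its first $r$-derivative, the ambient time derivative on normal extensions has to be related to $\partial^\circ$ through $\partial_t=\partial^\circ-V_\Gamma\partial_\nu$, and both the outer normal $\nu_\varepsilon$ and the normal velocity $V_\varepsilon$ of $\partial\Omega_\varepsilon(t)$ have to be developed to second order separately at each of the two boundaries. Hypothesis~(b) enters precisely here: the corrections $\phi_2$ and $\phi_3$ depend linearly on $h_0,h_1$ through the two $O(\varepsilon)$ boundary equations, so controlling $R_1$ and $R_2$ in the sup-norm requires up to two spatial and one normal-time derivatives of $h_i$, i.e.\ exactly the regularity $h_i\in C(\overline{S_T})\cap C^{2,1}(S_T)$.
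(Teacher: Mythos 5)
Your plan matches the paper's proof essentially line for line: the paper builds the same quadratic-in-$d$ approximate solution $\rho_\eta^\varepsilon$ (with $\phi_1=-k_d^{-1}V_\Gamma\eta$ and $\phi_2,\phi_3$ assembled from $\zeta_i=\tfrac{1}{g}\{h_i-k_d^{-1}(\partial^\circ g_i)\eta+k_d^{-2}g_iV_\Gamma^2\eta\}$), identifies the limit equation as the solvability condition for the $d^2$-coefficient, bounds the interior and boundary residuals as $O(\varepsilon)$ and $O(\varepsilon^2)$ via the constant-extension lemmas of Section~\ref{S:Prelim}, and feeds the difference into the uniform sup-norm estimate of Theorem~\ref{T:Uni_Heat} exactly as you describe. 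The only slip is in your schematic leading-order interior balance, which omits the $k_d^{-1}V_\Gamma^2\eta$ contribution coming from $-V_\Gamma\partial_r(\text{first-order term})$ and carries the wrong sign on the curvature term $k_dH\phi_1$; both would be corrected automatically in the actual computation and do not affect the strategy.
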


The condition (b) is satisfied if $\eta_0$ and $f$ are sufficiently smooth by a parabolic regularity theory.
Also, the last line of \eqref{E:Error_Gen} can be estimated by the H\"{o}lder norms of $\eta_0$ and $f$, but higher order norms will be required compared to the $C^{2,1}(S_T)$-estimate of $\eta$.

The proofs of Theorem \ref{T:Error_Spe} and \ref{T:Error_Gen} are given in Section \ref{S:Pf_Main}.
We explain the idea of the proofs in the next subsection.

In \eqref{E:Error_Spe} and \eqref{E:Error_Gen}, the volume of $Q_{\varepsilon,T}$ and $\Omega_\varepsilon(0)$ is irrelevant to the sup-norm.
Thus, by Theorems \ref{T:Error_Spe} and \ref{T:Error_Gen}, we can say that a classical solution to \eqref{E:Limit} approximates a classical solution to \eqref{E:Heat_MTD} of order $\varepsilon$ in the sup-norm as in the $L^2$-framework.
We should note that, however, unlike Theorem \ref{T:Error_Gen}, an additional regularity assumption on an $L^2$-weak solution to \eqref{E:Limit} is not required for the $L^2$-error estimate \eqref{E:Error_L2} even if $\nabla_\Gamma g_i\not\equiv0$ for $i=0,1$ (see \cite{Miu17}).
This is due to the fact that we use the strong form of \eqref{E:Heat_MTD} to show \eqref{E:Error_Gen}, while the weak form of \eqref{E:Heat_MTD} is used in the proof of \eqref{E:Error_L2}.
As we explain below, we need to construct a suitable approximate solution to the strong form of \eqref{E:Heat_MTD} involving $\nabla_\Gamma g_i\cdot\nabla_\Gamma\eta$, $i=0,1$ from a classical solution $\eta$ to \eqref{E:Limit} in the proof of \eqref{E:Error_Gen}, since $\eta$ itself does not satisfy the strong form of \eqref{E:Heat_MTD} even approximately.
Hence the condition (b) of Theorem \ref{T:Error_Gen} is required in order to make the approximate solution well-defined at $t=0$ and sufficiently regular in $Q_{\varepsilon,T}$.
In the proof of \eqref{E:Error_L2}, however, we take the average in the thin direction of the weak form of \eqref{E:Heat_MTD} and use the average of a weak solution to \eqref{E:Heat_MTD} as an approximate weak solution to \eqref{E:Limit}.
This method generates the $L^2(Q_{\varepsilon,T})$-norms of a weak solution to \eqref{E:Heat_MTD} and its gradient as additional terms in an $L^2$-error estimate, but they can be bounded by the $L^2(\Omega_\varepsilon(0))$-norm of an initial data (and an appropriate norm of a source term) by a standard energy estimate.
Hence we can avoid additional regularity assumptions on weak solutions to \eqref{E:Heat_MTD} and \eqref{E:Limit} even if $\nabla_\Gamma g_i\not\equiv0$ for $i=0,1$.

By the above explanations, one may guess that the condition (b) of Theorem \ref{T:Error_Gen} can be removed by the use of the average method.
It may be possible to do that, but the average method will generate the $C(Q_{\varepsilon,T})$-norms of the derivatives of a classical solution $\rho^\varepsilon$ to \eqref{E:Heat_MTD} up to the second order (or more) as additional terms in an error estimate instead of the last line of \eqref{E:Error_Gen}.
Then, unlike the norms in the last line of \eqref{E:Error_Gen}, the $C(Q_{\varepsilon,T})$-norms of the derivatives of $\rho^\varepsilon$ depend on $\varepsilon$, and we do not a priori know how these norms depend on $\varepsilon$.
Hence we need to estimate the derivatives of $\rho^\varepsilon$ in $C(Q_{\varepsilon,T})$ explicitly in terms of $\varepsilon$ by the given data $\rho_0^\varepsilon$ and $f^\varepsilon$, but such a task will be very tough and also require an additional regularity assumption on $\rho_0^\varepsilon$ and $f^\varepsilon$ (see e.g. \cite{LaSoUr68,Lie96} in the case of a general domain).
We would like to avoid such a tough work and an assumption on $\rho_0^\varepsilon$ and $f^\varepsilon$, so we do not use the average method in this paper.

\subsection{Idea of the proofs} \label{SS:In_Idea}
The proofs of Theorems \ref{T:Error_Spe} and \ref{T:Error_Gen} are based on a uniform a priori estimate for a classical solution to \eqref{E:Heat_MTD} and a construction of a suitable approximate solution to \eqref{E:Heat_MTD} from a classical solution to \eqref{E:Limit}.
For classical solutions $\rho^\varepsilon$ and $\eta$ to \eqref{E:Heat_MTD} and \eqref{E:Limit}, we give an approximate solution $\rho_\eta^\varepsilon$ to \eqref{E:Heat_MTD} close to $\eta$ so that the difference of $\rho^\varepsilon$ and $\rho_\eta^\varepsilon$ satisfies
\begin{align} \label{E:Int_DfEq}
  \left\{
  \begin{alignedat}{3}
    \partial_t(\rho^\varepsilon-\rho_\eta^\varepsilon)-k_d\Delta(\rho^\varepsilon-\rho_\eta^\varepsilon) &= (f^\varepsilon-\bar{f})-f_\eta^\varepsilon &\quad &\text{in} &\quad &Q_{\varepsilon,T}, \\
    \partial_{\nu_\varepsilon}(\rho^\varepsilon-\rho_\eta^\varepsilon)+k_d^{-1}V_\varepsilon(\rho^\varepsilon-\rho_\eta^\varepsilon) &= -\psi_\eta^\varepsilon &\quad &\text{on} &\quad &\partial_\ell Q_{\varepsilon,T}, \\
    (\rho^\varepsilon-\rho_\eta^\varepsilon)|_{t=0} &= \rho_0^\varepsilon-\rho_\eta^\varepsilon(\cdot,0) &\quad &\text{in} &\quad &\Omega_\varepsilon(0),
  \end{alignedat}
  \right.
\end{align}
where $f_\eta^\varepsilon$ and $\psi_\eta^\varepsilon$ are error terms due to $\rho_\eta^\varepsilon$.
Then we prove the uniform a priori estimate
\begin{align} \label{E:Int_UAP}
  \begin{aligned}
    \|\rho^\varepsilon-\rho_\eta^\varepsilon\|_{C(\overline{Q_{\varepsilon,T}})} &\leq c\Bigl(\|\rho_0^\varepsilon-\rho_\eta^\varepsilon(\cdot,0)\|_{C(\overline{\Omega_\varepsilon(0)})}+\|f^\varepsilon-\bar{f}\|_{C(Q_{\varepsilon,T})}\Bigr) \\
    &\qquad +c\Bigl(\|f_\eta^\varepsilon\|_{C(Q_{\varepsilon,T})}+\frac{1}{\varepsilon}\|\psi_\eta^\varepsilon\|_{C(\partial_\ell Q_{\varepsilon,T})}\Bigr)
  \end{aligned}
\end{align}
for the classical solution $\rho^\varepsilon-\rho_\eta^\varepsilon$ to \eqref{E:Int_DfEq}, where $c>0$ is a constant independent of $\varepsilon$ (see Theorem \ref{T:Uni_Heat}).
Hence we can get \eqref{E:Error_Spe} and \eqref{E:Error_Gen} by choosing $\rho_\eta^\varepsilon$ so that $\rho_\eta^\varepsilon-\bar{\eta}$ and $f_\eta^\varepsilon$ are of order $\varepsilon$ in $\overline{Q_{\varepsilon,T}}$ and $Q_{\varepsilon,T}$ and $\psi_\eta^\varepsilon$ is of order $\varepsilon^2$ on $\partial_\ell Q_{\varepsilon,T}$.

To prove \eqref{E:Int_UAP}, we consider a general parabolic equation
\begin{align} \label{E:Int_Para}
  \left\{
  \begin{alignedat}{3}
    \partial_t\chi^\varepsilon-\sum_{i,j=1}^na_{ij}^\varepsilon\partial_i\partial_j\chi^\varepsilon+\sum_{i=1}^nb_i^\varepsilon\partial_i\chi^\varepsilon+c^\varepsilon\chi^\varepsilon &= f^\varepsilon &\quad &\text{in} &\quad &Q_{\varepsilon,T}, \\
    \partial_{\nu_\varepsilon}\chi^\varepsilon+\beta^\varepsilon\chi^\varepsilon &= \psi^\varepsilon &\quad &\text{on} &\quad &\partial_\ell Q_{\varepsilon,T}, \\
    \chi^\varepsilon|_{t=0} &= \chi_0^\varepsilon &\quad &\text{in} &\quad &\Omega_\varepsilon(0).
  \end{alignedat}
  \right.
\end{align}
By a standard argument based on the maximum principle (see e.g. \cite{LaSoUr68,Lie96}), we show that a uniform a priori estimate similar to \eqref{E:Int_UAP} holds for a classical solution $\chi^\varepsilon$ to \eqref{E:Int_Para} provided that $a_{ij}^\varepsilon$, $b_i^\varepsilon$, and $c^\varepsilon$ are uniformly bounded in $Q_{\varepsilon,T}$ and $\beta^\varepsilon\geq-c\varepsilon$ on $\partial_\ell Q_{\varepsilon,T}$ with some constant $c>0$ independent of $\varepsilon$ (see Theorem \ref{T:Uni_Para}).
Here we note that the condition on $\beta^\varepsilon$ cannot be replaced by the uniform lower bound $\beta^\varepsilon\geq-c$ due to the thickness of $\Omega_\varepsilon(t)$ and the fact that the direction of $\nu_\varepsilon$ on the inner boundary of $\Omega_\varepsilon(t)$ is opposite to that of $\nu_\varepsilon$ on the outer boundary of $\Omega_\varepsilon(t)$ (see Remark \ref{R:Uni_Para}).
Moreover, the factor $\varepsilon^{-1}$ appears in \eqref{E:Int_UAP} because of the condition on $\beta^\varepsilon$, and we cannot remove it since we have a contradiction if \eqref{E:Int_UAP} holds without $\varepsilon^{-1}$ (see Remark \ref{R:PM_UapE}).
In order to apply the result of Theorem \ref{T:Uni_Para} to \eqref{E:Int_DfEq}, we also need to deal with the outer normal velocity $V_\varepsilon$ of $\partial\Omega_\varepsilon(t)$.
When $\varepsilon$ is sufficiently small, $V_\varepsilon$ is close to $-V_\Gamma$ on the inner boundary of $\Omega_\varepsilon(t)$ and to $V_\Gamma$ on the outer boundary of $\Omega_\varepsilon(t)$, where $V_\Gamma$ is the outer normal velocity of $\Gamma(t)$ (see Lemma \ref{L:MTD_ONV}).
Hence $V_\varepsilon$ is of order one with respect to $\varepsilon$ and can take both the positive and negative values on the whole boundary $\partial\Omega_\varepsilon(t)$, and thus $V_\varepsilon$ does not satisfy the condition on $\beta^\varepsilon$ in general.
To overcome this difficulty, we eliminate the zeroth order term $\pm V_\Gamma$ of $V_\varepsilon$ by multiplying $\rho^\varepsilon-\rho_\eta^\varepsilon$ by the exponential of a suitable function involving $V_\Gamma$.
Let us also mention that we do not scale the thickness of $\Omega_\varepsilon(t)$ and use local coordinates of $\Gamma(t)$ in the proofs of the uniform a priori estimates.
Our proofs avoid complicated calculations associated with the change of variables for the curved thin domain $\Omega_\varepsilon(t)$ with complicated geometry, so they seem to be readable and easy to understand.

Another task is to find a suitable approximate solution $\rho_\eta^\varepsilon$ to \eqref{E:Heat_MTD} close to $\eta$.
In order to derive \eqref{E:Error_Spe} and \eqref{E:Error_Gen} from \eqref{E:Int_UAP}, we need to choose $\rho_\eta^\varepsilon$ so that the error terms $f_\eta^\varepsilon$ and $\psi_\eta^\varepsilon$ are of order $\varepsilon$ and $\varepsilon^2$, respectively, i.e. $\rho_\eta^\varepsilon$ should satisfy \eqref{E:Heat_MTD} approximately of order $\varepsilon$ in $Q_{\varepsilon,T}$ and $\varepsilon^2$ on $\partial_\ell Q_{\varepsilon,T}$.
It turns out that $\eta$, the classical solution to \eqref{E:Limit}, itself does not satisfy \eqref{E:Heat_MTD} approximately due to the geometry and motion of $\partial\Omega_\varepsilon(t)$.
Hence we seek a suitable approximate solution by taking an asymptotic expansion
\begin{align*}
  \rho^\varepsilon(x,t) &= \sum_{k=0}^\infty\varepsilon^k\eta_k(\pi(x,t),t,\varepsilon^{-1}d(x,t)), \quad (x,t)\in\overline{Q_{\varepsilon,T}},
\end{align*}
where $\pi(x,t)$ is the closest point of $x$ on $\Gamma(t)$, $d(x,t)$ is the signed distance of $x$ from $\Gamma(t)$, and the functions
\begin{align*}
  \eta_k(y,t,r), \quad (y,t)\in\overline{S_T}, \, r\in[g_0(y,t),g_1(y,t)], \, k=0,1,\dots
\end{align*}
are independent of $\varepsilon$.
In Section \ref{S:Formal}, we substitute the right-hand side of the above expansion for \eqref{E:Heat_MTD} and determine $\eta_k$ for which \eqref{E:Heat_MTD} is satisfied of order $\varepsilon$ in $Q_{\varepsilon,T}$ and $\varepsilon^2$ on $\partial_\ell Q_{\varepsilon,T}$.
Then we find that $\eta_0=\eta$ should be a solution to \eqref{E:Limit} and it is enough to determine $\eta_1$ and $\eta_2$ for our purpose, but $\eta_2$ involves $\nabla_\Gamma g_i\cdot\nabla_\Gamma\eta_0$, $i=0,1$ in order to make the boundary condition of \eqref{E:Heat_MTD} satisfied of order $\varepsilon^2$.
This is the reason why we require the additional regularity assumption (b) on $\eta$ in Theorem \ref{T:Error_Gen}.
We also note that the limit equation \eqref{E:Limit} appears as a necessary condition on $\eta_0$ for the existence of $\eta_2$.

In the actual proofs of Theorems \ref{T:Error_Spe} and \ref{T:Error_Gen} given in Section \ref{S:Pf_Main}, we use the functions $\eta_1$ and $\eta_2$ to define the approximate solution $\rho_\eta^\varepsilon$ by
\begin{align*}
  \rho_\eta^\varepsilon(x,t) = \bar{\eta}(x,t)+\sum_{k=1,2}\varepsilon^k\eta_k(\pi(x,t),t,\varepsilon^{-1}d(x,t)), \quad (x,t)\in\overline{Q_{\varepsilon,T}}.
\end{align*}
This $\rho_\eta^\varepsilon$ is expressed as the sum of functions of the form
\begin{align*}
  \Bigl(d^k\bar{\zeta}\Bigr)(x,t) = d(x,t)^k\bar{\zeta}(x,t), \quad (x,t)\in\overline{Q_{\varepsilon,T}}, \, k=0,1,2,
\end{align*}
where $\zeta$ is a function on $\overline{S_T}$ and $\bar{\zeta}$ is its constant extension in the normal direction of $\Gamma(t)$.
Then we express the derivatives of $d^k\bar{\zeta}$ approximately in terms of functions on $\overline{S_T}$ by using lemmas in Section \ref{S:Prelim}, and apply the resulting expressions, the explicit forms of $\eta_1$ and $\eta_2$, and the fact that $\eta$ satisfies \eqref{E:Limit} to show that $\rho_\eta^\varepsilon$ indeed satisfies \eqref{E:Heat_MTD} approximately of order $\varepsilon$ in $Q_{\varepsilon,T}$ and $\varepsilon^2$ on $\partial_\ell Q_{\varepsilon,T}$.
Here we again note that the proofs of Theorems \ref{T:Error_Spe} and \ref{T:Error_Gen} avoid the scaling of the thickness of $\Omega_\varepsilon(t)$ and the use of local coordinates of $\Gamma(t)$.

\subsection{Literature overview} \label{SS:In_Lite}
Partial differential equations (PDEs) in thin domains appear in various fields like engineering, biology, and fluid mechanics.
Many authors have studied PDEs (especially reaction-diffusion and the Navier--Stokes equations) in flat thin domains around a lower dimensional domain since the pioneering works by Hale and Raugel \cite{HalRau92_DH,HalRau92_RD}.
Moreover, there are several works on a reaction-diffusion equation and its stationary problem in a thin L-shaped domain \cite{HalRau95}, in a flat thin domain with holes \cite{PriRyb01}, in thin tubes around curves or networks \cite{Yan90,Kos00,Kos02}, and in curved thin domains around lower dimensional manifolds \cite{PrRiRy02,PriRyb03}.
Curved thin domains around (hyper)surfaces also appears in the study of the Navier--Stokes equations \cite{TemZia97,Miu20_03,Miu21_02,Miu22_01} and of the asymptotic behavior of the eigenvalues of the Laplacian \cite{Sch96,Kre14,JimKur16,Yac18}.
We also refer to \cite{Rau95} for various examples of thin domains.

In the above cited papers, thin domains and their limit sets are stationary in time.
Pereira and Silva \cite{PerSil13} first studied a reaction-diffusion equation in a moving thin domain which has a moving boundary but whose limit set is a stationary domain.
Elliott and Stinner \cite{EllSti09} considered a moving thin domain around a moving surface in order to approximate a given advection-diffusion equation on a moving surface by a diffuse interface model (see also \cite{ElStStWe11} for a numerical computation of the diffuse interface model).
In \cite{Miu17}, the present author first rigorously derived an unknown limit equation on a moving surface from a given equation in a moving thin domain by the average method in the case of the heat equation.
Also, fluid and nonlinear diffusion equations on moving surfaces are formally derived by the thin-film limit in \cite{Miu18,MiGiLi18}, but the justification has not been done yet because of difficulties coming from the nonlinearity of the equations.

PDEs on moving surfaces also have attracted interest of many researchers recently in view of applications.
There are many works on the mathematical and numerical analysis of linear diffusion equations on moving surfaces like \eqref{E:Limit} (see e.g. \cite{DziEll07,DziEll13_AN,DziEll13_MC,OlReXu14,Vie14,AlElSt15_IFB,EllFri15} and the references cited therein).
Other equations on moving surfaces were also studied such as a Stefan problem \cite{AlpEll15}, a porous medium equation \cite{AlpEll16}, the Cahn--Hilliard equation \cite{EllRan15,CaeEll21,OCoSti16}, and the Hamilton--Jacobi equation \cite{DeElMiSt19}.
The authors of \cite{YaOzSa16} formulated equations of nonlinear elasticity in an evolving ambient space like a moving surface.
Also, the Stokes and Navier--Stokes equations on moving surfaces were proposed in \cite{ArrDeS09} and \cite{KoLiGi17,JaOlRe18}.
These fluid equations are coupled systems of the motion of a surface described by the normal velocity and the evolution of a tangential fluid velocity.
In \cite{SaYaSaMa20,OlReZh22}, numerical methods for the Navier--Stokes equations on a moving surface were proposed.
Also, when the motion of a surface is given, the wellposedness of the tangential Navier--Stokes equations on a moving surface was shown in \cite{OlReZh22}.
Some models of liquid crystals on moving surfaces were proposed and numerically computed in \cite{NiReVo19,NiReVo20}.
In \cite{DecSty22,ElGaKo22,AbBuGa22_QP,AbBuGa22_ST}, a coupled system of a mean curvature flow for a surface and a diffusion equation on the surface was studied.
We also refer to \cite{AlElSt15_PM,AlCaDjEl21} for abstract frameworks for PDEs in evolving function spaces.

\subsection{Organization of this paper} \label{SS:In_Orga}
The rest of this paper is organized as follows.
We fix notations and give basic results on surfaces and thin domains in Section \ref{S:Prelim}.
In Section \ref{S:Bulk} we show a uniform a priori estimate for a classical solution to the heat equation in the moving thin domain.
Section \ref{S:Pf_Main} is devoted to the proofs of Theorems \ref{T:Error_Spe} and \ref{T:Error_Gen}.
In Section \ref{S:Formal} we explain a formal derivation of the limit equation \eqref{E:Limit} and a suitable approximate solution used in the proofs of Theorems \ref{T:Error_Spe} and \ref{T:Error_Gen} based on an asymptotic expansion of the thin domain problem \eqref{E:Heat_MTD}.

\section{Preliminaries} \label{S:Prelim}
In this section we fix notations and give basic results on surfaces and thin domains.
We assume that functions and surfaces appearing below are sufficiently smooth.
Also, we consider a vector in $\mathbb{R}^n$, $n\geq2$ as a column vector, and for a square matrix $A$ we denote by $A^T$ the transpose of $A$ and by $|A| = \sqrt{\mathrm{tr}[A^TA]}$ the Frobenius norm of $A$.

\subsection{Fixed surface} \label{SS:FixSurf}
Let $\Gamma$ be a closed (i.e. compact and without boundary), connected, and oriented smooth hypersurface in $\mathbb{R}^n$, $n\geq2$.
We denote by $\nu$ and $d$ the unit outward normal vector field of $\Gamma$ and the signed distance function from $\Gamma$ increasing in the direction of $\nu$.
Also, we write $\kappa_1,\dots,\kappa_{n-1}$ for the principal curvatures of $\Gamma$.
Then $\nu$ and $\kappa_1,\dots,\kappa_{n-1}$ are smooth and thus bounded on $\Gamma$ by the smoothness of $\Gamma$.
Hence we may take a tubular neighborhood $\overline{N}=\{x\in\mathbb{R}^n \mid -\delta\leq d(x)\leq \delta\}$ of $\Gamma$ with $\delta>0$ such that for each $x\in\overline{N}$ there exists a unique $\pi(x)\in\Gamma$ satisfying
\begin{align} \label{E:Fermi}
  x = \pi(x)+d(x)\nu(\pi(x)), \quad \nabla d(x) = \nabla d(\pi(x)) = \nu(\pi(x))
\end{align}
and that $d$ and $\pi$ are smooth on $\overline{N}$ (see e.g. \cite[Section 14.6]{GilTru01}).
Moreover, taking $\delta>0$ sufficiently small, we may assume that
\begin{align} \label{E:Pr_Curv}
  c_0^{-1} \leq 1-r\kappa_\alpha(y) \leq c_0, \quad y\in\Gamma, \, r\in[-\delta,\delta], \, \alpha=1,\dots,n-1
\end{align}
with some constant $c_0>0$.

Let $I_n$ and $\nu\otimes\nu$ be the $n\times n$ identity matrix and the tensor product of $\nu$ with itself.
We set $P=(P_{ij})_{i,j}=I_n-\nu\otimes\nu$ on $\Gamma$, which is the orthogonal projection onto the tangent plane of $\Gamma$.
For a function $\eta$ on $\Gamma$, we define the tangential gradient and the tangential derivatives of $\eta$ by
\begin{align*}
  \nabla_\Gamma\eta(y) = P(y)\nabla\tilde{\eta}(y), \quad \underline{D}_i\eta(y) = \sum_{i=1}^nP_{ij}(y)\partial_j\tilde{\eta}(y), \quad y\in\Gamma, \, i=1,\dots,n
\end{align*}
so that $\nabla_\Gamma\eta=(\underline{D}_1\eta,\dots,\underline{D}_n\eta)^T$, where $\tilde{\eta}$ is an extension of $\eta$ to $\overline{N}$.
Then
\begin{align} \label{E:TGr_Tang}
  \nu\cdot\nabla_\Gamma\eta = 0, \quad P\nabla_\Gamma\eta = \nabla_\Gamma\eta \quad\text{on}\quad \Gamma
\end{align}
and the values of $\nabla_\Gamma\eta$ and $\underline{D}_i\eta$ are independent of the choice of $\tilde{\eta}$.
In particular, if we take the constant extension $\bar{\eta}=\eta\circ\pi$ of $\eta$ in the normal direction of $\Gamma$, then we have
\begin{align} \label{E:CEGr_Surf}
  \nabla\bar{\eta}(y) = \nabla_\Gamma\eta(y), \quad \partial_i\bar{\eta}(y) = \underline{D}_i\eta(y), \quad y\in\Gamma, \, i=1,\dots,n
\end{align}
by \eqref{E:Fermi} and $d(y)=0$ for $\Gamma$.
In what follows, a function with an overline always stands for the constant extension of a function on $\Gamma$ in the normal direction of $\Gamma$.
We set
\begin{align*}
  \Delta_\Gamma\eta = \sum_{i=1}^n\underline{D}_i\underline{D}_i\eta, \quad |\nabla_\Gamma^2\eta| = \left(\sum_{i,j=1}^n|\underline{D}_i\underline{D}_j\eta|^2\right)^{1/2} \quad\text{on}\quad \Gamma
\end{align*}
and call $\Delta_\Gamma$ the Laplace--Beltrami operator on $\Gamma$.
For a (not necessarily tangential) vector field $u$ on $\Gamma$, we define the surface divergence of $u$ by
\begin{align*}
  \mathrm{div}_\Gamma u = \sum_{i=1}^n\underline{D}_iu_i \quad\text{on}\quad \Gamma,
\end{align*}
where $u=(u_1,\dots,u_n)^T$.
Also, we define
\begin{align*}
  W_{ij} = -\underline{D}_i\nu_j, \quad H = -\mathrm{div}_\Gamma\nu \quad\text{on}\quad \Gamma, \quad i,j=1,\dots,n
\end{align*}
and call $W=(W_{ij})_{i,j}$ and $H$ the Weingarten map (or the shape operator) and the mean curvature of $\Gamma$.
The matrix $W$ is symmetric since $W=-\nabla^2d$ on $\Gamma$ by \eqref{E:Fermi} and \eqref{E:CEGr_Surf}.
Moreover, $W\nu=-\nabla_\Gamma(|\nu|^2/2)=0$ by $|\nu|=1$ on $\Gamma$ and thus $W$ has the eigenvalue zero and $WP=PW=W$ on $\Gamma$.
It is also known (see e.g. \cite{Lee18}) that the other eigenvalues of $W$ are the principal curvatures $\kappa_1,\dots,\kappa_{n-1}$.
Hence
\begin{align*}
  H = \mathrm{tr}[W] = \sum_{\alpha=1}^{n-1}\kappa_\alpha, \quad |W|^2 = \mathrm{tr}[W^2] = \sum_{\alpha=1}^{n-1}\kappa_\alpha^2 \quad\text{on}\quad \Gamma.
\end{align*}
In particular, we have
\begin{align} \label{E:HW_Bound}
  \begin{aligned}
    |H(y)| &\leq (n-1)\max_{\alpha=1,\dots,n-1}\max_{z\in\Gamma}|\kappa_\alpha(z)|, \\
    |W(y)| &\leq (n-1)^{1/2}\max_{\alpha=1,\dots,n-1}\max_{z\in\Gamma}|\kappa_\alpha(z)|
  \end{aligned}
\end{align}
for all $y\in\Gamma$.
Also, it follows from \eqref{E:Pr_Curv} that the matrix
\begin{align*}
  I_n-d(x)\overline{W}(x) = I_n-rW(y)
\end{align*}
is invertible for all $x=y+rn(y)\in\overline{N}$ with $y\in\Gamma$ and $r\in[-\delta,\delta]$.
We set
\begin{align} \label{E:Inv_IdW}
  R(x) = \bigl(R_{ij}(x)\bigr)_{i,j} = \Bigl\{I_n-d(x)\overline{W}(x)\Bigr\}^{-1}, \quad x\in\overline{N}.
\end{align}

Let us give a few lemmas related to the derivatives of the constant extension of a function on $\Gamma$.
In Lemmas \ref{L:R_Norm}--\ref{L:CE_Lap} below, we denote by $c$ a general positive constant depending only on $n$, $\delta$, the constant $c_0$ appearing in \eqref{E:Pr_Curv}, and the quantities
\begin{align*}
  \max_{\alpha=1,\dots,n-1}\max_{y\in\Gamma}|\kappa_\alpha(y)|, \quad \max_{i,j,k=1,\dots,n}\max_{y\in\Gamma}|\underline{D}_iW_{jk}(y)|.
\end{align*}

\begin{lemma} \label{L:R_Norm}
  For all $x\in\overline{N}$ we have
  \begin{align} \label{E:R_Norm}
    |R(x)| \leq c, \quad |I_n-R(x)| \leq c|d(x)|.
  \end{align}
\end{lemma}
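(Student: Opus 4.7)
The plan is to exploit the spectral structure of $W$ on $\Gamma$ and then to derive a clean algebraic identity for $I_n-R$.

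First I would establish the bound $|R(x)|\le c$ by working pointwise. Fix $x=y+r\nu(y)\in\overline{N}$ with $y=\pi(x)$ and $r=d(x)$, so that $R(x)=\bigl(I_n-rW(y)\bigr)^{-1}$. Since $W(y)$ is symmetric with $W\nu=0$ and remaining eigenvalues $\kappa_1(y),\dots,\kappa_{n-1}(y)$, it admits an orthonormal eigenbasis in which $I_n-rW(y)$ is diagonal with entries $1$ and $1-r\kappa_\alpha(y)$ for $\alpha=1,\dots,n-1$. By \eqref{E:Pr_Curv} each factor $1-r\kappa_\alpha(y)$ lies in $[c_0^{-1},c_0]$, so $R(x)$ is well defined and its eigenvalues are $1$ and $(1-r\kappa_\alpha(y))^{-1}$, each of modulus at most $c_0$. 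Computing the Frobenius norm in this basis gives
\begin{equation*}
  |R(x)|^2 = 1+\sum_{\alpha=1}^{n-1}\bigl(1-r\kappa_\alpha(y)\bigr)^{-2} \le 1+(n-1)c_0^2,
\end{equation*}
which is the first inequality of \eqref{E:R_Norm}.

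For the second inequality, I would not estimate $I_n-R$ directly but instead use the defining relation $R(x)\bigl(I_n-d(x)\overline{W}(x)\bigr)=I_n$. Multiplying out and rearranging yields
\begin{equation*}
  I_n-R(x) = -d(x)\,R(x)\,\overline{W}(x),
\end{equation*}
so submultiplicativity of the Frobenius norm gives
\begin{equation*}
  |I_n-R(x)| \le |d(x)|\,|R(x)|\,|\overline{W}(x)|.
\end{equation*}
The factor $|R(x)|$ was bounded in the first step, and $|\overline{W}(x)|=|W(\pi(x))|$ is bounded uniformly on $\Gamma$ by \eqref{E:HW_Bound}. Combining these yields $|I_n-R(x)|\le c|d(x)|$.

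Neither step presents a real obstacle; the only mildly subtle point is recognizing that the natural way to bound $|I_n-R|$ is to pull a factor of $I_n-d\overline{W}$ out of it via the identity $R-I_n=dR\overline{W}$, rather than trying to expand $R$ as a Neumann series (which would also work, since $|d\overline{W}|$ can be made small by shrinking $\delta$, but is heavier than necessary and only gives the bound for $|d|$ small).
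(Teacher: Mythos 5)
Your proof of the first inequality is essentially identical to the paper's: both diagonalize $W(y)$ in an orthonormal eigenbasis, use that $I_n-rW(y)$ then has eigenvalues $1$ and $1-r\kappa_\alpha(y)$, and bound the Frobenius norm via \eqref{E:Pr_Curv}. For the second inequality you take a genuinely different route. The paper stays in the eigenbasis and computes the exact value $|I_n-R(x)|^2=\sum_{\alpha=1}^{n-1}\bigl(r\kappa_\alpha(y)/(1-r\kappa_\alpha(y))\bigr)^2$, then bounds it by \eqref{E:Pr_Curv}. You instead extract the algebraic identity $I_n-R(x)=-d(x)R(x)\overline{W}(x)$ from the defining relation $R\{I_n-d\overline{W}\}=I_n$ and apply submultiplicativity of the Frobenius norm together with your bound on $|R|$ and \eqref{E:HW_Bound} on $|\overline{W}|$. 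Both arguments are correct and of comparable length. Your version avoids repeating the eigenbasis computation and makes explicit the identity $R-I_n=dR\overline{W}$, which is in effect the same manipulation the paper uses later in Lemma \ref{L:CEGr_2nd} when it differentiates $R\{I_n-d\overline{W}\}=I_n$ to compute $\partial_iR$; so your route exposes that connection more visibly. The paper's version gives the exact Frobenius norm rather than an upper bound and keeps the whole lemma within a single computational framework. Your aside about the Neumann series is also correct: it would require $|d\overline{W}|<1$, which holds under \eqref{E:Pr_Curv} but adds an unnecessary smallness hypothesis to the argument.
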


\begin{proof}
  For $x\in\overline{N}$ let $y=\pi(x)\in\Gamma$ and $r=d(x)\in[-\delta,\delta]$ so that
  \begin{align*}
    R(x) = \{I_n-rW(y)\}^{-1}.
  \end{align*}
  Since $W(y)$ is symmetric and has the eigenvalues $\kappa_1(y),\dots,\kappa_{n-1}(y)$ and zero, we can take an orthonormal basis of $\mathbb{R}^n$ consisting of the corresponding eigenvectors of $W(y)$.
  Using this, we easily find that
  \begin{align*}
    |R(x)|^2 = \sum_{\alpha=1}^{n-1}\{1-r\kappa_\alpha(y)\}^{-2}+1, \quad |I_n-R(x)|^2 = \sum_{\alpha=1}^{n-1}\left(\frac{r\kappa_\alpha(y)}{1-r\kappa_\alpha(y)}\right)^2.
  \end{align*}
  Hence we obtain \eqref{E:R_Norm} by \eqref{E:Pr_Curv} and $r=d(x)$.
\end{proof}

\begin{lemma} \label{L:CEGr_TN}
  Let $\eta$ be a function on $\Gamma$ and $\bar{\eta}=\eta\circ\pi$ be its constant extension.
  Then
  \begin{align} \label{E:CEGr_TN}
    \nabla\bar{\eta}(x) = R(x)\overline{\nabla_\Gamma\eta}(x), \quad \bar{\nu}(x)\cdot\nabla\bar{\eta}(x) = 0
  \end{align}
  for all $x\in\overline{N}$.
  Moreover,
  \begin{align}
    |\nabla\bar{\eta}(x)| &\leq c\Bigl|\overline{\nabla_\Gamma\eta}(x)\Bigr|, \label{E:CEGr_Bound} \\
    \Bigl|\nabla\bar{\eta}(x)-\overline{\nabla_\Gamma\eta}(x)\Bigr| &\leq c|d(x)|\Bigl|\overline{\nabla_\Gamma\eta}(x)\Bigr|. \label{E:CEGr_Diff}
  \end{align}
\end{lemma}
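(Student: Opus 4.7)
The plan is to derive the identity $\nabla\bar{\eta} = R\,\overline{\nabla_\Gamma\eta}$ first, and then read off the remaining assertions from it combined with Lemma \ref{L:R_Norm}.

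To this end, I would exploit the Fermi coordinate structure: every $x\in\overline{N}$ is of the form $x=y+r\nu(y)$ with $y=\pi(x)$ and $r=d(x)$, and $\bar{\eta}$ is by construction constant in the normal direction, so $\bar{\eta}(y+r\nu(y))=\eta(y)$. Differentiating this identity with respect to $r$ yields $\nabla\bar{\eta}(x)\cdot\nu(y)=0$, which is the second equation in \eqref{E:CEGr_TN} once we recall $\bar{\nu}(x)=\nu(\pi(x))=\nu(y)$.

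Next, I would differentiate the same identity with respect to $y$ in an arbitrary tangent direction $v\in T_y\Gamma$. Using $\partial_v\nu(y)=-W(y)v$, which follows from $W_{ij}=-\underline{D}_i\nu_j$ and the symmetry of $W$ noted in Section \ref{SS:FixSurf}, this gives
\begin{align*}
  \nabla\bar{\eta}(x)\cdot\bigl(I_n-rW(y)\bigr)v = \nabla_\Gamma\eta(y)\cdot v, \quad v\in T_y\Gamma.
\end{align*}
By symmetry of $W$ the left-hand side equals $\bigl(I_n-rW(y)\bigr)\nabla\bar{\eta}(x)\cdot v$. To promote this tangential equality to a full vector identity I would examine the $\nu$-component of both sides: using $W\nu=0$ together with the first step and $\nu\cdot\nabla_\Gamma\eta=0$, both $\bigl(I_n-rW(y)\bigr)\nabla\bar{\eta}(x)\cdot\nu(y)$ and $\nabla_\Gamma\eta(y)\cdot\nu(y)$ vanish. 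Hence the identity holds for every $v\in\mathbb{R}^n$, so $\bigl(I_n-d(x)\overline{W}(x)\bigr)\nabla\bar{\eta}(x)=\overline{\nabla_\Gamma\eta}(x)$, and inverting via \eqref{E:Inv_IdW} yields the first equation of \eqref{E:CEGr_TN}.

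The two bounds \eqref{E:CEGr_Bound} and \eqref{E:CEGr_Diff} then follow immediately from this representation: the first by $|\nabla\bar{\eta}(x)|\leq|R(x)||\overline{\nabla_\Gamma\eta}(x)|$ together with $|R(x)|\leq c$ from Lemma \ref{L:R_Norm}, and the second by rewriting $\nabla\bar{\eta}-\overline{\nabla_\Gamma\eta}=(R-I_n)\overline{\nabla_\Gamma\eta}$ and applying the bound $|R(x)-I_n|\leq c|d(x)|$ from the same lemma. The only delicate point—and thus the part where care is needed rather than calculation—is the tangent/normal decomposition step that extends the tangential identity into a full $\mathbb{R}^n$-identity; once that is handled cleanly via $W\nu=0$ and the normal constancy of $\bar{\eta}$, the rest is a direct matrix estimate.
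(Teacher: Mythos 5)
Your argument is correct, and it differs from the paper's in a way worth noting. The paper differentiates the relation $\pi(x)=x-d(x)\bar{\nu}(\pi(x))$ with respect to $x$ to obtain the Jacobian identity $\nabla\pi(x)=R(x)\overline{P}(x)$, and then applies the chain rule once to $\bar{\eta}(x)=\bar{\eta}(\pi(x))$ together with \eqref{E:TGr_Tang} and \eqref{E:CEGr_Surf}; the gradient identity $\nabla\bar{\eta}=R\,\overline{\nabla_\Gamma\eta}$ then drops out in one line as a matrix product. You instead work with the inverse parametrization $(y,r)\mapsto y+r\nu(y)$, differentiate $\bar{\eta}(y+r\nu(y))=\eta(y)$ in the normal direction $r$ and in tangent directions $v$ separately, obtain the identity $(I_n-rW(y))\nabla\bar{\eta}(x)\cdot v=\nabla_\Gamma\eta(y)\cdot v$ first for tangent $v$, and then upgrade it to a full vector identity by checking the normal component (which vanishes on both sides via $W\nu=0$ and the first step). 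Both routes rest on the same ingredients (symmetry of $W$, $W\nu=0$, tangentiality of $\nabla_\Gamma\eta$, and invertibility of $I_n-d\overline{W}$), but the paper's is more compact because it computes the projection Jacobian once and for all, while yours is more hands-on and makes the tangent/normal decomposition of the argument explicit, which is arguably easier to follow for a reader not yet comfortable with the formula for $\nabla\pi$. The step you flagged as delicate — extending the tangential identity to all of $\mathbb{R}^n$ — is handled correctly, and the derivation of \eqref{E:CEGr_Bound} and \eqref{E:CEGr_Diff} from Lemma \ref{L:R_Norm} matches the paper exactly.
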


\begin{proof}
  For the mapping $\pi=(\pi_1,\dots,\pi_n)$ given in \eqref{E:Fermi}, we write
  \begin{align*}
    \nabla\pi =
    \begin{pmatrix}
      \partial_1\pi_1 & \cdots & \partial_1\pi_n \\
      \vdots & \ddots & \vdots \\
      \partial_n\pi_1 & \cdots & \partial_n\pi_n
    \end{pmatrix}.
  \end{align*}
  Since $\pi(x)=x-d(x)\bar{\nu}(\pi(x))$ by \eqref{E:Fermi}, we see by \eqref{E:CEGr_Surf} with $y=\pi(x)$ that
  \begin{align*}
    \nabla\pi(x)\Bigl\{I_n-d(x)\overline{W}(x)\Bigr\} = \overline{P}(x), \quad \nabla\pi(x) = \overline{P}(x)R(x) = R(x)\overline{P}(x),
  \end{align*}
  where the last equality is due to $PW=WP$ on $\Gamma$.
  We differentiate $\bar{\eta}(x)=\bar{\eta}(\pi(x))$ and use the above equality, \eqref{E:TGr_Tang}, and \eqref{E:CEGr_Surf} to get the first equality of \eqref{E:CEGr_TN}.
  Also, the second equality holds since $\bar{\eta}$ is the constant extension of $\eta$ in the direction of $\bar{\nu}$.
  The inequalities \eqref{E:CEGr_Bound} and \eqref{E:CEGr_Diff} follow from \eqref{E:R_Norm} and \eqref{E:CEGr_TN}.
\end{proof}

\begin{lemma} \label{L:CEGr_2nd}
  For all $x\in\overline{N}$ and $i=1,\dots,n$ we have
  \begin{align} \label{E:IdW_Der}
    \Bigl|\partial_iR(x)-\bar{\nu}_i(x)\overline{W}(x)\Bigr| \leq c|d(x)|.
  \end{align}
  Also, let $\eta$ be a function on $\Gamma$ and $\bar{\eta}=\eta\circ\pi$ be its constant extension.
  Then
  \begin{align} \label{E:CG2_Bo}
    |\nabla^2\bar{\eta}(x)| \leq c\left(\Bigl|\overline{\nabla_\Gamma\eta}(x)\Bigr|+\Bigl|\overline{\nabla_\Gamma^2\eta}(x)\Bigr|\right)
  \end{align}
  for all $x\in\overline{N}$, where $\nabla^2\bar{\eta}=(\partial_i\partial_j\bar{\eta})_{i,j}$, and
  \begin{multline} \label{E:CG2_Diff}
    \left|\partial_i\partial_j\bar{\eta}(x)-\overline{\underline{D}_i\underline{D}_j\eta}(x)-\bar{\nu}_i(x)\sum_{k=1}^n\overline{W}_{jk}(x)\overline{\underline{D}_k\eta}(x)\right| \\
    \leq c|d(x)|\left(\Bigl|\overline{\nabla_\Gamma\eta}(x)\Bigr|+\Bigl|\overline{\nabla_\Gamma^2\eta}(x)\Bigr|\right)
  \end{multline}
  for all $x\in\overline{N}$ and $i,j=1,\dots,n$.
\end{lemma}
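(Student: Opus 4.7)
The plan is to prove the three estimates in order, using only the formula $R = \{I_n - d\overline{W}\}^{-1}$ from \eqref{E:Inv_IdW}, Lemma \ref{L:R_Norm}, and Lemma \ref{L:CEGr_TN}.

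First, for \eqref{E:IdW_Der}, I would differentiate the defining identity for $R$. Applying the general rule $\partial_i R = -R(\partial_i R^{-1})R$ to $R^{-1} = I_n - d\overline{W}$, and using $\partial_i d = \bar\nu_i$ from \eqref{E:Fermi}, I obtain
\[
  \partial_i R = \bar\nu_i R\overline{W}R + d R (\partial_i \overline{W}) R.
\]
The difference $\partial_i R - \bar\nu_i \overline{W}$ then splits into $\bar\nu_i(R\overline{W}R - \overline{W})$ and $d R (\partial_i \overline{W}) R$. The first piece is $O(|d|)$ by Lemma \ref{L:R_Norm} (since $|R - I_n| \leq c|d|$ and $|R|$, $|\overline{W}|$ are bounded). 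The second piece is $O(|d|)$ as soon as $|\partial_i \overline{W}|$ is bounded, which follows by applying Lemma \ref{L:CEGr_TN} componentwise to each $W_{jk}$ together with the assumed uniform bound on $\underline{D}_l W_{jk}$.

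For \eqref{E:CG2_Bo} and \eqref{E:CG2_Diff}, the starting point is the identity $\partial_j \bar{\eta} = \sum_k R_{jk}\,\overline{\underline{D}_k \eta}$ from Lemma \ref{L:CEGr_TN}. Differentiating once more and rewriting the term $\partial_i \overline{\underline{D}_k \eta}$ by another application of Lemma \ref{L:CEGr_TN} (to the scalar $\underline{D}_k \eta$) gives
\[
  \partial_i \partial_j \bar{\eta} = \sum_k (\partial_i R_{jk})\,\overline{\underline{D}_k \eta} + \sum_{k,l} R_{jk} R_{il}\,\overline{\underline{D}_l \underline{D}_k \eta}.
\]
The bound \eqref{E:CG2_Bo} then follows at once from Lemma \ref{L:R_Norm} together with the uniform bound $|\partial_i R| \leq c$, which is an immediate consequence of \eqref{E:IdW_Der} since $\bar\nu_i \overline{W}$ is bounded. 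For the sharper estimate \eqref{E:CG2_Diff}, I substitute the refined expressions $\partial_i R_{jk} = \bar\nu_i \overline{W}_{jk} + O(|d|)$ from \eqref{E:IdW_Der} and $R_{jk} = \delta_{jk} + O(|d|)$, $R_{il} = \delta_{il} + O(|d|)$ from \eqref{E:R_Norm}. The first sum then contributes the curvature term $\bar\nu_i \sum_k \overline{W}_{jk}\,\overline{\underline{D}_k \eta}$ modulo $c|d|\,|\overline{\nabla_\Gamma \eta}|$, and the second sum collapses to $\overline{\underline{D}_i \underline{D}_j \eta}$ modulo $c|d|\,|\overline{\nabla_\Gamma^2 \eta}|$, giving exactly \eqref{E:CG2_Diff}.

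There is no genuine obstacle beyond careful bookkeeping of the $O(|d|)$ errors through the product expansions. The one subtle point worth flagging is the order of differentiation: it is the \emph{outer} derivative $\partial_i$ that lands on $R_{jk}$, and this is what produces the factor $\bar\nu_i \overline{W}_{jk}$ (rather than $\bar\nu_j \overline{W}_{ik}$) in \eqref{E:CG2_Diff}. The symmetry $\partial_i \partial_j = \partial_j \partial_i$ is then consistent with the apparently asymmetric right-hand side only modulo the $O(|d|)$ remainder, reflecting the standard non-commutativity of tangential derivatives on $\Gamma$.
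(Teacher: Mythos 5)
Your proof is correct and follows essentially the same route as the paper: both derive $\partial_iR=\bar\nu_iR\overline{W}R+dR(\partial_i\overline{W})R$ by differentiating $R\{I_n-d\overline{W}\}=I_n$ (your formula $\partial_iR=-R(\partial_iR^{-1})R$ is the same computation), both apply Lemma \ref{L:CEGr_TN} twice to reach $\partial_i\partial_j\bar\eta=\sum_k(\partial_iR_{jk})\overline{\underline{D}_k\eta}+\sum_{k,l}R_{jk}R_{il}\overline{\underline{D}_l\underline{D}_k\eta}$, and both then read off \eqref{E:CG2_Bo} and \eqref{E:CG2_Diff} from \eqref{E:R_Norm} and \eqref{E:IdW_Der}. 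Your closing remark about the apparent asymmetry of \eqref{E:CG2_Diff} being reconciled by the commutator of tangential derivatives is a sound observation, though the reconciliation is exact on $\Gamma$ itself (where $d=0$), not merely modulo $O(|d|)$.
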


\begin{proof}
  We apply $\partial_i$ to
  \begin{align*}
    R(x)\Bigl\{I_n-d(x)\overline{W}(x)\Bigr\} = I_n, \quad x\in\overline{N}
  \end{align*}
  and use $\nabla d=\bar{\nu}$ in $\overline{N}$ to find that
  \begin{align*}
    \partial_iR(x) = \bar{\nu}_i(x)R(x)\overline{W}(x)R(x)+d(x)R(x)\partial_i\overline{W}(x)R(x).
  \end{align*}
  By this equality, \eqref{E:HW_Bound}, \eqref{E:R_Norm}, \eqref{E:CEGr_Bound}, and the boundedness of $\underline{D}_iW_{jk}$ on $\Gamma$, we obtain \eqref{E:IdW_Der}.
  Also, for a function $\eta$ on $\Gamma$, it follows from \eqref{E:CEGr_TN} that
  \begin{align*}
    \partial_j\bar{\eta}(x) = \sum_{k=1}^nR_{jk}(x)\overline{\underline{D}_k\eta}(x), \quad x\in\overline{N}.
  \end{align*}
  We apply $\partial_i$ to both sides and use \eqref{E:CEGr_TN} to $\underline{D}_k\eta$ to get
  \begin{align*}
    \partial_i\partial_j\bar{\eta}(x) = \sum_{k=1}^n\partial_iR_{jk}(x)\overline{\underline{D}_k\eta}(x)+\sum_{k,l=1}^nR_{jk}(x)R_{il}(x)\overline{\underline{D}_l\underline{D}_k\eta}(x).
  \end{align*}
  Hence we have \eqref{E:CG2_Bo} and \eqref{E:CG2_Diff} by this equality, \eqref{E:R_Norm}, and \eqref{E:IdW_Der}.
\end{proof}

The next lemma plays a fundamental role in the proofs of Theorems \ref{T:Error_Spe} and \ref{T:Error_Gen}.

\begin{lemma} \label{L:CE_Lap}
  Let $\eta$ be a function on $\Gamma$ and $\bar{\eta}=\eta\circ\pi$ be its constant extension.
  Then
  \begin{align}
    \Bigl|\Delta\bar{\eta}(x)-\overline{\Delta_\Gamma\eta}(x)\Bigr| &\leq c|d(x)|\left(\Bigl|\overline{\nabla_\Gamma\eta}(x)\Bigr|+\Bigl|\overline{\nabla_\Gamma^2\eta}(x)\Bigr|\right), \label{E:CE_Lap} \\
    \Bigl|\Delta(d\bar{\eta})(x)+\overline{H}(x)\bar{\eta}(x)\Bigr| &\leq c|d(x)|\left(|\bar{\eta}(x)|+\Bigl|\overline{\nabla_\Gamma\eta}(x)\Bigr|+\Bigl|\overline{\nabla_\Gamma^2\eta}(x)\Bigr|\right), \label{E:CEL_1d} \\
    \Bigl|\Delta(d^2\bar{\eta})(x)-2\bar{\eta}(x)\Bigr| &\leq c|d(x)|\left(|\bar{\eta}(x)|+\Bigl|\overline{\nabla_\Gamma\eta}(x)\Bigr|+\Bigl|\overline{\nabla_\Gamma^2\eta}(x)\Bigr|\right) \label{E:CEL_2d}
  \end{align}
  for all $x\in\overline{N}$.
  Here we write $(d^k\bar{\eta})(x)=d(x)^k\bar{\eta}(x)$ for $x\in\overline{N}$ and $k=1,2$.
\end{lemma}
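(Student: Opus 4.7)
The plan is to derive all three estimates from Lemma~\ref{L:CEGr_2nd} together with the standard product rule, exploiting the two structural facts $\nabla d=\bar\nu$ and $\bar\nu\cdot\nabla\bar\eta=0$ (from \eqref{E:CEGr_TN}) that kill most cross terms. For \eqref{E:CE_Lap} I would simply take the trace in \eqref{E:CG2_Diff}; for the $d$ and $d^2$ cases I would expand $\Delta(d^k\bar\eta)$ by the Leibniz rule and control each remaining piece.

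\textbf{Step 1: proof of \eqref{E:CE_Lap}.} Setting $i=j$ in \eqref{E:CG2_Diff} and summing over $i=1,\dots,n$, I get
\begin{align*}
  \Delta\bar\eta(x)-\overline{\Delta_\Gamma\eta}(x)-\sum_{i,k=1}^n\bar\nu_i(x)\overline W_{ik}(x)\overline{\underline D_k\eta}(x)
\end{align*}
bounded in absolute value by $c|d(x)|(|\overline{\nabla_\Gamma\eta}(x)|+|\overline{\nabla_\Gamma^2\eta}(x)|)$. The sum involving $\overline W$ vanishes, because $W$ is symmetric and $W\nu=0$ on $\Gamma$ (hence $\nu^{T}W=0$), and the extensions preserve this pointwise. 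This yields \eqref{E:CE_Lap}.

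\textbf{Step 2: proof of \eqref{E:CEL_1d}.} Using $\nabla d=\bar\nu$ and the second identity of \eqref{E:CEGr_TN}, the product rule gives
\begin{align*}
  \Delta(d\bar\eta)(x) = \bar\eta(x)\,\Delta d(x)+2\bar\nu(x)\cdot\nabla\bar\eta(x)+d(x)\Delta\bar\eta(x) = \bar\eta(x)\,\Delta d(x)+d(x)\Delta\bar\eta(x).
\end{align*}
Thus it suffices to show $|\Delta d(x)+\overline H(x)|\leq c|d(x)|$, because \eqref{E:CE_Lap} then controls the last term by $c|d(x)|(|\overline{\nabla_\Gamma\eta}(x)|+|\overline{\nabla_\Gamma^2\eta}(x)|)+|d(x)||\overline{\Delta_\Gamma\eta}(x)|$, which is of the desired form. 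Since $\partial_j d=\bar\nu_j$, the first identity of \eqref{E:CEGr_TN} applied to $\eta=\nu_j$ gives $\partial_i\partial_j d=-\sum_k R_{ik}\overline W_{kj}$, so $\nabla^{2}d=-R\overline W$ (note $WP=W$, hence $R\overline W=\overline W R$). Taking the trace and using \eqref{E:R_Norm}, which yields $|R-I_n|\leq c|d|$, gives $\Delta d=-\mathrm{tr}(R\overline W)=-\overline H+O(|d|)$, as required.

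\textbf{Step 3: proof of \eqref{E:CEL_2d}.} By the same product-rule expansion, using $|\nabla d|^2=1$ and $\bar\nu\cdot\nabla\bar\eta=0$,
\begin{align*}
  \Delta(d^{2}\bar\eta)(x)=\bar\eta(x)\Delta(d^{2})(x)+4d(x)\bar\nu(x)\cdot\nabla\bar\eta(x)+d(x)^{2}\Delta\bar\eta(x)=2\bar\eta(x)+2d(x)\bar\eta(x)\Delta d(x)+d(x)^{2}\Delta\bar\eta(x).
\end{align*}
Step~2 bounds $|\Delta d|$ uniformly, and \eqref{E:CE_Lap} bounds $|\Delta\bar\eta|$ by $c(|\overline{\nabla_\Gamma\eta}|+|\overline{\nabla_\Gamma^2\eta}|)$ (plus the already-extracted $\overline{\Delta_\Gamma\eta}$ term absorbed into the same bound). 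Since $|d(x)|\leq\delta$ on $\overline N$, the two remaining terms combine into $c|d(x)|(|\bar\eta(x)|+|\overline{\nabla_\Gamma\eta}(x)|+|\overline{\nabla_\Gamma^2\eta}(x)|)$, giving \eqref{E:CEL_2d}.

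The only nontrivial point is the identity $\nabla^{2}d=-R\overline W$ and the attendant expansion $\Delta d=-\overline H+O(|d|)$; everything else is algebra using Leibniz and the cancellations $W\nu=0$, $|\nabla d|=1$, $\bar\nu\cdot\nabla\bar\eta=0$ already established in Section~\ref{SS:FixSurf}.
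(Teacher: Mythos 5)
Your proof is correct and follows essentially the same route as the paper: take the trace of \eqref{E:CG2_Diff} and use $W\nu=0$ for \eqref{E:CE_Lap}; expand $\Delta(d\bar\eta)$ and $\Delta(d^2\bar\eta)$ by Leibniz, kill cross terms via $\nabla d=\bar\nu$ and $\bar\nu\cdot\nabla\bar\eta=0$, and replace $\Delta d=\operatorname{div}\bar\nu$ by $-\overline H$ up to $O(|d|)$. One cosmetic remark: the paper reaches $|\operatorname{div}\bar\nu+\overline H|\le c|d|$ by applying \eqref{E:CEGr_Diff} to the components of $\nu$, whereas you derive the sharper identity $\nabla^2d=-R\overline W$ and then use $|R-I_n|\le c|d|$; these are equivalent, and your parenthetical justification of $R\overline W=\overline W R$ via $WP=W$ is off-target (it follows simply because $R$ is a rational function of $W$), but it is in any case superfluous since you only take a trace.
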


\begin{proof}
  The inequality \eqref{E:CE_Lap} follows from \eqref{E:CG2_Diff} and
  \begin{align*}
    \sum_{i,k=1}^n\nu_iW_{ik}\underline{D}_k\eta = (W\nu)\cdot\nabla_\Gamma\eta = 0 \quad\text{on}\quad \Gamma
  \end{align*}
  by the symmetry of $W$ and $W\nu=0$ on $\Gamma$.
  Next we see that
  \begin{align*}
    \Delta(d\bar{\eta}) = (\mathrm{div}\,\bar{\nu})\bar{\eta}+2\bar{\nu}\cdot\nabla\bar{\eta}+d\Delta\bar{\eta} = (\mathrm{div}\,\bar{\nu})\bar{\eta}+d\Delta\bar{\eta} \quad\text{in}\quad \overline{N}
  \end{align*}
  by $\nabla d=\bar{\nu}$ in $\overline{N}$ and \eqref{E:CEGr_TN}.
  Moreover,
  \begin{align} \label{Pf_CEL:div_nu}
    \Bigl|\mathrm{div}\,\bar{\nu}+\overline{H}\Bigr| = \Bigl|\mathrm{div}\,\bar{\nu}-\overline{\mathrm{div}_\Gamma\nu}\Bigr| \leq c|d|\Bigl|\overline{W}\Bigr| \leq c|d| \quad\text{in}\quad \overline{N}
  \end{align}
  by $H=-\mathrm{div}_\Gamma\nu$ on $\Gamma$, \eqref{E:CEGr_Diff}, $\underline{D}_i\nu_j=-W_{ij}$ on $\Gamma$, and \eqref{E:HW_Bound}.
  By these relations and \eqref{E:CG2_Bo}, we obtain \eqref{E:CEL_1d}.
  We also observe that
  \begin{align*}
    \Delta(d^2\bar{\eta}) = 2|\bar{\nu}|^2\bar{\eta}+2d\{(\mathrm{div}\,\bar{\nu})\bar{\eta}+2\bar{\nu}\cdot\nabla\bar{\eta}\}+d^2\Delta\bar{\eta} = 2\bar{\eta}+2d(\mathrm{div}\,\bar{\nu})\bar{\eta}+d^2\Delta\bar{\eta}
  \end{align*}
  in $\overline{N}$ by $\nabla d=\bar{\nu}$ in $\overline{N}$, $|\nu|=1$ on $\Gamma$, and \eqref{E:CEGr_TN}.
  We apply \eqref{Pf_CEL:div_nu} to the above equality and then use \eqref{E:HW_Bound}, \eqref{E:CG2_Bo}, and $|d|\leq\delta$ in $\overline{N}$ to get \eqref{E:CEL_2d}.
\end{proof}

\subsection{Moving surface} \label{SS:MovSurf}
For each $t\in[0,T]$, $T>0$, let $\Gamma(t)$ be a given closed, connected, and oriented smooth hypersurface in $\mathbb{R}^n$.
We set $\Gamma_0=\Gamma(0)$ and
\begin{align*}
  S_T = \bigcup_{t\in(0,T]}\Gamma(t)\times\{t\}, \quad \overline{S_T} = (\Gamma_0\times\{0\})\cup S_T,
\end{align*}
and use the same notations as in Section \ref{SS:FixSurf}.

We assume that there exists a smooth mapping $\Phi\colon\Gamma_0\times[0,T]\to\mathbb{R}^n$ such that $\Phi(\cdot,t)$ is a diffeomorphism from $\Gamma_0$ onto $\Gamma(t)$ for each $t\in[0,T]$ with $\Phi(\cdot,0)=\mathrm{Id}$.
Then the unit outward normal vector field $\nu$ and the principal curvatures $\kappa_1,\dots,\kappa_{n-1}$ of $\Gamma(t)$ are smooth and thus bounded as functions on $\overline{S_T}$.
Hence we can take a constant $\delta>0$ independent of $t$ and a tubular neighborhood
\begin{align} \label{E:Tubular}
  \overline{N(t)} = \{x\in\mathbb{R}^n \mid -\delta\leq d(x,t)\leq\delta\}
\end{align}
of $\Gamma(t)$ such that for each $x\in\overline{N(t)}$ there exists a unique $\pi(x,t)\in\Gamma(t)$ satisfying \eqref{E:Fermi} and that $d$ and $\pi$ are smooth on $\overline{N_T}$, where
\begin{align*}
  \overline{N_T} = \bigcup_{t\in[0,T]}\overline{N(t)}\times\{t\}.
\end{align*}
Moreover, \eqref{E:Pr_Curv} holds for $(y,t)\in\overline{S_T}$ instead of $y\in\Gamma$ with a constant $c_0$ independent of $t$, and the functions $\underline{D}_iW_{jk}$ are bounded on $\overline{S_T}$.
Therefore, we can apply Lemmas \ref{L:R_Norm}--\ref{L:CE_Lap} on $\Gamma(t)$ for all $t\in[0,T]$ with a constant $c$ independent of $t$.

We define the (scalar) outer normal velocity of $\Gamma(t)$ by
\begin{align*}
  V_\Gamma(\Phi(Y,t),t) = \partial_t\Phi(Y,t)\cdot\nu(\Phi(Y,t),t), \quad (Y,t) \in \Gamma_0\times[0,T].
\end{align*}
Then $V_\Gamma$ is smooth on $\overline{S_T}$.
Note that the evolution of $\Gamma(t)$ (as a subset of $\mathbb{R}^n$) is determined by the normal velocity field $V_\Gamma\nu$, i.e. if we define a mapping $\Phi_\nu\colon\Gamma_0\times[0,T]\to\mathbb{R}^n$ by
\begin{align} \label{E:N_Flow}
  \Phi_\nu(Y,0) = Y, \quad \partial_t\Phi_\nu(Y,t) = (V_\Gamma\nu)(\Phi_\nu(Y,t),t), \quad (Y,t)\in \Gamma_0\times[0,T],
\end{align}
then $\Phi_\nu(\cdot,t)$ is a diffeomorfism from $\Gamma_0$ onto $\Gamma(t)$ for each $t\in[0,T]$.
Moreover,
\begin{align} \label{E:SDdt_Surf}
  \partial_td(y,t) = -V_\Gamma(y,t), \quad (y,t)\in\overline{S_T},
\end{align}
since the signed distance $d$ from $\Gamma(t)$ increases in the direction of $\nu$.
We can also compute the time derivatives of $d$ and $\pi$ in $\overline{N_T}$ as follows.

\begin{lemma} \label{L:dpi_dt}
  For all $(x,t)\in\overline{N_T}$ we have
  \begin{align}
    \partial_td(x,t) &= -\overline{V_\Gamma}(x,t), \label{E:SDdt_TN} \\
    \partial_t\pi(x,t) &= \overline{V_\Gamma}(x,t)\bar{\nu}(x,t)+d(x,t)R(x,t)\overline{\nabla_\Gamma V_\Gamma}(x,t), \label{E:Pidt}
  \end{align}
  where $R$ is the matrix given by \eqref{E:Inv_IdW}.
\end{lemma}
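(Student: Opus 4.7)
The plan is to exploit the Fermi decomposition $x=\pi(x,t)+d(x,t)\bar{\nu}(x,t)$ from \eqref{E:Fermi} as a single vector identity coupling the two unknowns $\partial_t\pi$ and $\partial_td$. Differentiating it in $t$ for fixed $x$ produces
\[
  0 = \partial_t\pi(x,t) + \partial_td(x,t)\,\bar{\nu}(x,t) + d(x,t)\,\partial_t\bar{\nu}(x,t),
\]
from which I will extract both \eqref{E:SDdt_TN} and \eqref{E:Pidt} by using suitable auxiliary identities and projecting appropriately.

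For \eqref{E:SDdt_TN}, I would first differentiate the identity $d(\pi(x,t),t)\equiv 0$ (valid because $\pi(x,t)\in\Gamma(t)$) and apply $\nabla d=\bar{\nu}$ from \eqref{E:Fermi} together with the surface identity \eqref{E:SDdt_Surf} to conclude $\bar{\nu}(x,t)\cdot\partial_t\pi(x,t) = \overline{V_\Gamma}(x,t)$. Then I take the inner product of the displayed identity above with $\bar{\nu}$: the constraint $|\bar{\nu}|^2\equiv 1$ forces $\bar{\nu}\cdot\partial_t\bar{\nu} = 0$, so what remains is $\overline{V_\Gamma} + \partial_td = 0$, i.e.\ $\partial_td = -\overline{V_\Gamma}$.

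For \eqref{E:Pidt}, the crucial observation is that $\partial_t\bar{\nu}$ can be computed \emph{without} resorting to the chain rule on $\bar{\nu}(x,t)=\nu(\pi(x,t),t)$, which would reintroduce $\partial_t\pi$ and create a circular system. Instead, since $\bar{\nu}=\nabla d$ in $\overline{N_T}$ by \eqref{E:Fermi} and $d$ is smooth, Schwarz's theorem yields $\partial_t\bar{\nu} = \nabla(\partial_td) = -\nabla\overline{V_\Gamma}$ via the scalar formula just established. Substituting this and $\partial_td=-\overline{V_\Gamma}$ back into the displayed identity gives $\partial_t\pi(x,t) = \overline{V_\Gamma}\bar{\nu} + d\,\nabla\overline{V_\Gamma}$, and applying Lemma \ref{L:CEGr_TN} to the function $V_\Gamma(\cdot,t)$ on $\Gamma(t)$ rewrites the ambient gradient as $\nabla\overline{V_\Gamma} = R\,\overline{\nabla_\Gamma V_\Gamma}$, producing \eqref{E:Pidt}. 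The main obstacle is conceptual rather than computational: recognizing that $\bar{\nu}=\nabla d$ bypasses the naive chain-rule calculation and reduces everything to the scalar formula for $\partial_t d$.
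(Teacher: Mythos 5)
Your proposal is correct and follows essentially the same path as the paper's own proof: both start from the Fermi relation $\pi(x,t)=x-d(x,t)\nabla d(x,t)$, combine its $t$-derivative with that of $d(\pi(x,t),t)=0$ together with $|\nabla d|=1$ to get $\partial_td=-\overline{V_\Gamma}$, and then swap $\partial_t$ and $\nabla$ (Schwarz on $\bar\nu=\nabla d$) with Lemma~\ref{L:CEGr_TN} to obtain $\partial_t\pi$. Your reorganization of the computation (dotting the full vector identity with $\bar\nu$ and invoking $\bar\nu\cdot\partial_t\bar\nu=0$ rather than expanding $\partial_t\pi\cdot\nabla d$ directly) is only cosmetic.
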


\begin{proof}
  Since $d(\pi(x,t),t)=0$ by $\pi(x,t)\in\Gamma(t)$, we have
  \begin{align*}
    \partial_td(\pi(x,t),t)+\partial_t\pi(x,t)\cdot\nabla d(\pi(x,t),t) = 0.
  \end{align*}
  Also, since $\pi(x,t)=x-d(x,t)\nabla d(x,t)$ by \eqref{E:Fermi}, it follows that
  \begin{align*}
    \partial_t\pi(x,t) = -\partial_td(x,t)\nabla d(x,t)-d(x,t)\partial_t\nabla d(x,t).
  \end{align*}
  Then we see by $\nabla d(\pi(x,t),t)=\nabla d(x,t)$ and $|\nabla d(x,t)|^2=1$ that
  \begin{align*}
    \partial_t\pi(x,t)\cdot\nabla d(\pi(x,t),t) &= -\partial_td(x,t)|\nabla d(x,t)|^2-\frac{1}{2}d(x,t)\partial_t\bigl(|\nabla d(x,t)|^2\bigr) \\
    &= -\partial_td(x,t).
  \end{align*}
  We deduce from the above equalities and \eqref{E:SDdt_Surf} with $y=\pi(x,t)\in\Gamma(t)$ that
  \begin{align*}
    \partial_td(x,t) = \partial_td(\pi(x,t),t) = -V_\Gamma(\pi(x,t),t) = -\overline{V}_\Gamma(x,t).
  \end{align*}
  Hence \eqref{E:SDdt_TN} holds.
  Moreover, we see by this equality and \eqref{E:CEGr_TN} that
  \begin{align*}
    \partial_t\nabla d(x,t) = \nabla\partial_td(x,t) = -\nabla\Bigl(\overline{V}_\Gamma(x,t)\Bigr) = -R(x,t)\overline{\nabla_\Gamma V_\Gamma}(x,t).
  \end{align*}
  Hence \eqref{E:Pidt} follows from the above equalities and \eqref{E:Fermi}.
\end{proof}

Let $\eta$ be a function on $\overline{S_T}$.
We define the normal time derivative of $\eta$ by
\begin{align} \label{E:NTime_Der}
  \partial^\circ\eta(\Phi_\nu(Y,t),t) = \frac{\partial}{\partial t}\Bigl(\eta(\Phi_\nu(Y,t),t)\Bigr), \quad (Y,t)\in\Gamma_0\times[0,T],
\end{align}
where $\Phi_\nu$ is the mapping given by \eqref{E:N_Flow}.
Note that
\begin{align*}
  \partial^\circ\eta(y,t) = \partial_t\tilde{\eta}(y,t)+(V_\Gamma\nu)(y,t)\cdot\nabla\tilde{\eta}(y,t), \quad (y,t)\in\overline{S_T}
\end{align*}
for any extension $\tilde{\eta}$ of $\eta$ to $\overline{N_T}$.
In particular, setting $\bar{\eta}(x,t)=\eta(\pi(x,t),t)$ for $(x,t)\in\overline{N_T}$, which is the constant extension of $\eta$ in the normal direction of $\Gamma(t)$, we have
\begin{align} \label{E:CEdt_Surf}
  \partial_t\bar{\eta}(y,t) = \partial^\circ\eta(y,t), \quad (y,t)\in\overline{S_T}
\end{align}
by $\nu\cdot\nabla\bar{\eta}=\nu\cdot\nabla_\Gamma\eta=0$ on $\overline{S_T}$ (see \eqref{E:TGr_Tang} and \eqref{E:CEGr_Surf}).

Let us give auxiliary results related to the normal time derivative.

\begin{lemma} \label{L:Nu_NTD}
  For all $(y,t)\in\overline{S_T}$ we have
  \begin{align} \label{E:Nu_NTD}
    \partial^\circ\nu(y,t) = -\nabla_\Gamma V_\Gamma(y,t).
  \end{align}
\end{lemma}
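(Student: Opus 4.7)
The plan is to reduce the computation of $\partial^\circ\nu$ to that of the time derivative of $\nabla d$, by exploiting the identity $\bar{\nu}(x,t)=\nabla d(x,t)$ on the moving tubular neighborhood $\overline{N_T}$, and then restrict the resulting bulk identity to $\Gamma(t)$.

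First I would recall that \eqref{E:Fermi} gives $\nabla d(x,t)=\nu(\pi(x,t),t)=\bar{\nu}(x,t)$ for every $(x,t)\in\overline{N_T}$. Differentiating in $t$ and swapping the order of differentiation yields
\begin{align*}
  \partial_t\bar{\nu}(x,t) = \partial_t\nabla d(x,t) = \nabla\bigl(\partial_t d\bigr)(x,t) = -\nabla\overline{V_\Gamma}(x,t),
\end{align*}
where the last equality uses \eqref{E:SDdt_TN}. Applying \eqref{E:CEGr_TN} to the scalar function $V_\Gamma$ then rewrites this as $\partial_t\bar{\nu}(x,t)=-R(x,t)\overline{\nabla_\Gamma V_\Gamma}(x,t)$ on $\overline{N_T}$.

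Now I would restrict to $(y,t)\in\overline{S_T}$, where $d(y,t)=0$. By the definition \eqref{E:Inv_IdW} of $R$, this gives $R(y,t)=I_n$, so $\partial_t\bar{\nu}(y,t)=-\nabla_\Gamma V_\Gamma(y,t)$ on $\overline{S_T}$. Since $\bar{\nu}$ is, componentwise, the constant extension of $\nu$ in the normal direction of $\Gamma(t)$, the identity \eqref{E:CEdt_Surf} applied to each Cartesian component $\nu_k$ gives $\partial^\circ\nu_k(y,t)=\partial_t\bar{\nu}_k(y,t)$, and assembling these components yields \eqref{E:Nu_NTD}. There is no real obstacle here: the argument is a direct consequence of $\bar{\nu}=\nabla d$ together with the previously established lemmas, and the only point worth mentioning is that \eqref{E:CEdt_Surf}, though stated for scalar functions, passes to vector fields by treating the components one at a time.
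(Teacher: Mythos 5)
Your proof is correct and follows essentially the same route as the paper: differentiate $\bar{\nu}=\nabla d$ in $t$, swap $\partial_t$ and $\nabla$, invoke \eqref{E:SDdt_TN}, and restrict to $\Gamma(t)$ using \eqref{E:CEGr_Surf} (equivalently, your observation that $R=I_n$ when $d=0$) together with \eqref{E:CEdt_Surf} applied componentwise. The only cosmetic difference is that you pass through \eqref{E:CEGr_TN} and specialize, whereas the paper invokes the on-surface identity \eqref{E:CEGr_Surf} directly.
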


\begin{proof}
  Since $\partial_t\bar{\nu}=\partial_t\nabla d=\nabla\partial_td$ in $\overline{N_T}$ by \eqref{E:Fermi}, we apply \eqref{E:SDdt_TN} to this equality, set $x=y\in\Gamma(t)$, and use \eqref{E:CEGr_Surf} and \eqref{E:CEdt_Surf} to get \eqref{E:Nu_NTD}.
\end{proof}

\begin{lemma} \label{L:CEdt_TN}
  Let $\eta$ be a function on $\overline{S_T}$ and $\bar{\eta}$ be its constant extension.
  Then
  \begin{align} \label{E:CEdt_TN}
    \partial_t\bar{\eta}(x,t) = \overline{\partial^\circ\eta}(x,t)+d(x,t)R(x,t)\overline{\nabla_\Gamma V_\Gamma}(x,t)\cdot\overline{\nabla_\Gamma\eta}(x,t)
  \end{align}
  for all $(x,t)\in\overline{N_T}$.
  Moreover,
  \begin{align}
    |\partial_t\bar{\eta}(x,t)| &\leq c\left(\Bigl|\overline{\partial^\circ\eta}(x,t)\Bigr|+\Bigl|\overline{\nabla_\Gamma\eta}(x,t)\Bigr|\right), \label{E:CEdt_Bound} \\
    \Bigl|\partial_t\bar{\eta}(x,t)-\overline{\partial^\circ\eta}(x,t)\Bigr| &\leq c|d(x)|\Bigl|\overline{\nabla_\Gamma\eta}(x)\Bigr|. \label{E:CEdt_Diff}
  \end{align}
\end{lemma}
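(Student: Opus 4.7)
The plan is to differentiate the identity $\bar{\eta}(x,t)=\bar{\eta}(\pi(x,t),t)$ in $t$ using the chain rule, then substitute the formula \eqref{E:Pidt} for $\partial_t\pi$ from Lemma \ref{L:dpi_dt}, and simplify using the tangentiality of $\nabla_\Gamma\eta$. The identity above holds on $\overline{N_T}$ because $\bar{\eta}$ is constant along normals of $\Gamma(t)$, i.e. $x$ and $\pi(x,t)$ lie on the same normal line.

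First I would write, for $(x,t)\in\overline{N_T}$,
\begin{align*}
  \partial_t\bar{\eta}(x,t) = \partial_t\pi(x,t)\cdot\nabla\bar{\eta}(\pi(x,t),t)+(\partial_t\bar{\eta})(\pi(x,t),t),
\end{align*}
where the second term on the right denotes the pure $t$-derivative of $\bar{\eta}\colon\overline{N_T}\to\mathbb{R}$ evaluated at the surface point $(\pi(x,t),t)\in\overline{S_T}$. By \eqref{E:CEdt_Surf} this term equals $\partial^\circ\eta(\pi(x,t),t)=\overline{\partial^\circ\eta}(x,t)$. For the gradient factor, since $y=\pi(x,t)\in\Gamma(t)$ satisfies $d(y,t)=0$, we get $R(y,t)=I_n$, and \eqref{E:CEGr_TN} (equivalently \eqref{E:CEGr_Surf}) yields $\nabla\bar{\eta}(\pi(x,t),t)=\nabla_\Gamma\eta(\pi(x,t),t)=\overline{\nabla_\Gamma\eta}(x,t)$.

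Next I substitute \eqref{E:Pidt}. The normal component $\overline{V_\Gamma}(x,t)\bar{\nu}(x,t)$ of $\partial_t\pi(x,t)$ has vanishing inner product with the tangential vector $\overline{\nabla_\Gamma\eta}(x,t)$ by \eqref{E:TGr_Tang} applied at $\pi(x,t)$, so this contribution cancels out. The remaining component $d(x,t)R(x,t)\overline{\nabla_\Gamma V_\Gamma}(x,t)$ produces exactly the second summand of \eqref{E:CEdt_TN}, proving the identity. The bounds \eqref{E:CEdt_Bound} and \eqref{E:CEdt_Diff} are then immediate: using $|R(x,t)|\leq c$ from Lemma \ref{L:R_Norm} and the boundedness of $\nabla_\Gamma V_\Gamma$ on $\overline{S_T}$ (from smoothness of $V_\Gamma$ and compactness of $\overline{S_T}$), the extra term in \eqref{E:CEdt_TN} is bounded by $c|d(x,t)||\overline{\nabla_\Gamma\eta}(x,t)|$, which gives \eqref{E:CEdt_Diff} directly and \eqref{E:CEdt_Bound} after absorbing $|d|\leq\delta$ and adding $|\overline{\partial^\circ\eta}(x,t)|$.

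I do not expect any substantial obstacle; the only point requiring care is the bookkeeping that distinguishes the two types of $t$-derivative in the chain rule (the parameter derivative inside $\bar{\eta}(\pi(\cdot),\cdot)$ versus the pure $t$-derivative of $\bar{\eta}$ at a fixed spatial point), together with the observation that the normal piece of $\partial_t\pi$ must be killed by tangentiality before \eqref{E:CEdt_TN} emerges in its clean form.
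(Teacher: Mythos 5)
Your proposal is correct and follows essentially the same route as the paper: differentiate the identity $\bar{\eta}(x,t)=\bar{\eta}(\pi(x,t),t)$ in $t$, invoke \eqref{E:CEdt_Surf} and \eqref{E:CEGr_Surf} on the surface, substitute \eqref{E:Pidt}, and use tangentiality \eqref{E:TGr_Tang} to discard the normal piece, with the bounds following from \eqref{E:R_Norm} and the boundedness of $\nabla_\Gamma V_\Gamma$. Your explicit note distinguishing the parameter derivative from the pure $t$-derivative of $\bar{\eta}$ at a fixed spatial point is the right point to be careful about and is implicit in the paper's notation.
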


\begin{proof}
  We differentiate $\bar{\eta}(x,t)=\bar{\eta}(\pi(x,t),t)$ with respect to $t$ and use \eqref{E:CEGr_Surf} and \eqref{E:CEdt_Surf} with $y=\pi(x,t)\in\Gamma(t)$ to find that
  \begin{align*}
    \partial_t\bar{\eta}(x,t) &= \partial_t\bar{\eta}(\pi(x,t),t)+\partial_t\pi(x,t)\cdot\nabla\bar{\eta}(\pi(x,t),t) \\
    &= \partial^\circ\eta(\pi(x,t),t)+\partial_t\pi(x,t)\cdot\nabla_\Gamma\eta(\pi(x,t),t) \\
    &= \overline{\partial^\circ\eta}(x,t)+\partial_t\pi(x,t)\cdot\overline{\nabla_\Gamma\eta}(x,t).
  \end{align*}
  Then we apply \eqref{E:Pidt} and use \eqref{E:TGr_Tang} to get \eqref{E:CEdt_TN}.
  Also, \eqref{E:CEdt_Bound} and \eqref{E:CEdt_Diff} follow from \eqref{E:R_Norm}, \eqref{E:CEdt_TN}, and the smoothness of $V_\Gamma$ on $\overline{S_T}$.
\end{proof}

\subsection{Moving thin domain} \label{SS:Thin}
Let $g_0$ and $g_1$ be smooth functions on $\overline{S_T}$.
We set $g=g_1-g_0$ on $\overline{S_T}$ and assume that there exists a constant $c>0$ such that
\begin{align} \label{E:G_inf}
  g(y,t) \geq c \quad\text{for all}\quad (y,t) \in \overline{S_T}.
\end{align}
For $\varepsilon>0$, we define a moving thin domain $\Omega_\varepsilon(t)$ by
\begin{align*}
  \Omega_\varepsilon(t) = \{y+r\nu(y,t) \mid y\in\Gamma(t), \, \varepsilon g_0(y,t)<r<\varepsilon g_1(y,t)\}, \quad t\in[0,T]
\end{align*}
and its inner and outer boundaries $\Gamma_\varepsilon^0(t)$ and $\Gamma_\varepsilon^1(t)$ by
\begin{align} \label{E:Def_Bdry}
  \Gamma_\varepsilon^i(t) = \{y+\varepsilon g_i(y,t)\nu(y,t) \mid y\in\Gamma(t)\}, \quad t\in[0,T], \, i=0,1.
\end{align}
We denote by $\partial\Omega_\varepsilon(t)=\Gamma_\varepsilon^0(t)\cup\Gamma_\varepsilon^1(t)$ the whole boundary of $\Omega_\varepsilon(t)$ and set
\begin{gather*}
  Q_{\varepsilon,T} = \bigcup_{t\in(0,T]}\Omega_\varepsilon(t)\times\{t\}, \quad \partial_\ell Q_{\varepsilon,T} = \bigcup_{t\in(0,T]}\partial\Omega_\varepsilon(t)\times\{t\}, \\
  \overline{Q_{\varepsilon,T}} = \bigcup_{t\in[0,T]}\overline{\Omega_\varepsilon(t)}\times\{t\} = \Bigl(\overline{\Omega_\varepsilon(0)}\times\{0\}\Bigr)\cup Q_{\varepsilon,T}\cup\partial_\ell Q_{\varepsilon,T}.
\end{gather*}
Since $g_0$ and $g_1$ are smooth and thus bounded on $\overline{S_T}$, we may take a sufficiently small $\varepsilon_0\in(0,1)$ so that $\varepsilon|g_i|\leq\delta$ on $\overline{S_T}$ for all $\varepsilon\in(0,\varepsilon_0)$ and $i=0,1$, where $\delta>0$ is the time-independent constant appearing in \eqref{E:Tubular}.
Hence for all $\varepsilon\in(0,\varepsilon_0)$ and $t\in[0,T]$ we have $\overline{\Omega_\varepsilon(t)}\subset\overline{N(t)}$ and we can use the lemmas in the previous subsections on $\overline{\Omega_\varepsilon(t)}$ with constants independent of $t$ and $\varepsilon$.
In what follows, we assume $\varepsilon\in(0,\varepsilon_0)$ and denote by $c$ a general positive constant independent of $t$ and $\varepsilon$ (but possibly depending on $T$).
Also, we frequently use the inequality $0<\varepsilon<1$ in the sequel without mention.

Let $\nu_\varepsilon$ and $V_\varepsilon$ be the unit outward normal vector field and the (scalar) outer normal velocity of $\partial\Omega_\varepsilon(t)$.
We express them in terms of functions on $\Gamma(t)$ below.
Let
\begin{align} \label{E:TD_AuxVec}
  \tau_\varepsilon^i(y,t) = \{I_n-\varepsilon g_i(y,t)W(y,t)\}^{-1}\nabla_\Gamma g_i(y,t), \quad (y,t)\in\overline{S_T}, \, i=0,1.
\end{align}
Note that, since $W$ is symmetric and $W\nu=0$ on $\overline{S_T}$,
\begin{align} \label{E:TD_AVTan}
  \tau_\varepsilon^i\cdot\nu = \nabla_\Gamma g_i\cdot\{(I_n-\varepsilon g_iW)^{-1}\nu\} = \nabla_\Gamma g_i\cdot\nu = 0 \quad\text{on}\quad \overline{S_T}.
\end{align}
Also, by \eqref{E:R_Norm} with $x=y+\varepsilon g_i(y,t)\nu(y,t)\in\Gamma_\varepsilon^i(t)$ and the smoothness of $g_i$ on $\overline{S_T}$,
\begin{align} \label{E:TD_AVBo}
  |\tau_\varepsilon^i(y,t)| \leq c, \quad |\tau_\varepsilon^i(y,t)-\nabla_\Gamma g_i(y,t)| \leq c\varepsilon, \quad (y,t) \in \overline{S_T}, \, i=0,1
\end{align}
with a constant $c>0$ independent of $\varepsilon$.
As in the previous subsections, we denote by $\bar{\eta}$ the constant extension of a function $\eta$ on $\overline{S_T}$ in the normal direction of $\Gamma(t)$.

\begin{lemma} \label{L:MTD_UON}
  For $i=0,1$, $t\in[0,T]$, and $x\in\Gamma_\varepsilon(t)$, we have
  \begin{align} \label{E:MTD_UON}
    \nu_\varepsilon(x,t) = \frac{(-1)^{i+1}}{\sqrt{1+\varepsilon^2|\bar{\tau}_\varepsilon^i(x,t)|^2}}\{\bar{\nu}(x,t)-\varepsilon\bar{\tau}_\varepsilon^i(x,t)\}.
  \end{align}
\end{lemma}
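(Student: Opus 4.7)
The plan is to realize $\Gamma_\varepsilon^i(t)$ as a level set of a smooth function on $\overline{N(t)}$ and then read off $\nu_\varepsilon$ from the normalized gradient with the correct sign.

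First I would introduce, for each $t\in[0,T]$ and $i=0,1$, the function
\begin{align*}
  \Phi_i(x,t) = d(x,t)-\varepsilon\bar{g}_i(x,t), \quad x\in\overline{N(t)},
\end{align*}
where $\bar{g}_i(x,t)=g_i(\pi(x,t),t)$ is the constant extension of $g_i(\cdot,t)$ in the normal direction of $\Gamma(t)$. By definition \eqref{E:Def_Bdry}, a point $x\in\overline{N(t)}$ lies on $\Gamma_\varepsilon^i(t)$ exactly when $d(x,t)=\varepsilon g_i(\pi(x,t),t)$, i.e.\ when $\Phi_i(x,t)=0$; so $\Gamma_\varepsilon^i(t)=\{\Phi_i(\cdot,t)=0\}$.

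Next I would compute $\nabla\Phi_i$. By \eqref{E:Fermi} we have $\nabla d=\bar{\nu}$, while Lemma \ref{L:CEGr_TN} gives $\nabla\bar{g}_i=R\,\overline{\nabla_\Gamma g_i}$. At a point $x\in\Gamma_\varepsilon^i(t)$ with $y=\pi(x,t)\in\Gamma(t)$ one has $d(x,t)=\varepsilon g_i(y,t)$, so by \eqref{E:Inv_IdW}
\begin{align*}
  R(x,t) = \{I_n-\varepsilon g_i(y,t)W(y,t)\}^{-1},
\end{align*}
and hence $R(x,t)\,\overline{\nabla_\Gamma g_i}(x,t)=\tau_\varepsilon^i(y,t)=\bar{\tau}_\varepsilon^i(x,t)$ by \eqref{E:TD_AuxVec}. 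Therefore
\begin{align*}
  \nabla\Phi_i(x,t) = \bar{\nu}(x,t)-\varepsilon\bar{\tau}_\varepsilon^i(x,t).
\end{align*}
Since $\tau_\varepsilon^i\cdot\nu=0$ on $\overline{S_T}$ by \eqref{E:TD_AVTan}, the extension also satisfies $\bar{\tau}_\varepsilon^i\cdot\bar{\nu}=0$ on $\overline{N(t)}$, and combined with $|\bar{\nu}|=1$ this yields
\begin{align*}
  |\nabla\Phi_i(x,t)|^2 = 1+\varepsilon^2|\bar{\tau}_\varepsilon^i(x,t)|^2.
\end{align*}

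Finally I would fix the orientation. Inside $\Omega_\varepsilon(t)$ we have $\varepsilon g_0<d<\varepsilon g_1$, which translates to $\Phi_1<0$ near $\Gamma_\varepsilon^1(t)$ and $\Phi_0>0$ near $\Gamma_\varepsilon^0(t)$. Thus $\nabla\Phi_1$ points out of $\Omega_\varepsilon(t)$ on $\Gamma_\varepsilon^1(t)$, while $\nabla\Phi_0$ points into $\Omega_\varepsilon(t)$ on $\Gamma_\varepsilon^0(t)$, giving
\begin{align*}
  \nu_\varepsilon = (-1)^{i+1}\frac{\nabla\Phi_i}{|\nabla\Phi_i|} = \frac{(-1)^{i+1}}{\sqrt{1+\varepsilon^2|\bar{\tau}_\varepsilon^i|^2}}\,\bigl(\bar{\nu}-\varepsilon\bar{\tau}_\varepsilon^i\bigr)
\end{align*}
on $\Gamma_\varepsilon^i(t)$, which is \eqref{E:MTD_UON}. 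There is no real obstacle here; the only points requiring care are (i) recognizing that the combination of $R$ with $\overline{\nabla_\Gamma g_i}$ at a point of $\Gamma_\varepsilon^i(t)$ collapses precisely to $\bar{\tau}_\varepsilon^i$, and (ii) tracking the inward/outward orientation to obtain the sign $(-1)^{i+1}$.
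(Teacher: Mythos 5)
Your argument is correct and complete. The paper itself does not supply a proof for this lemma: it refers the reader to \cite[Lemma 3.9]{Miu22_01}, noting only that the argument there (for a fixed surface in $\mathbb{R}^3$) carries over. So there is no in-text proof for a line-by-line comparison. That said, the level-set approach you take is the natural one and almost certainly what the cited reference does.

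All the steps check out. The identification $\Gamma_\varepsilon^i(t)=\{\Phi_i(\cdot,t)=0\}$ with $\Phi_i=d-\varepsilon\bar{g}_i$ follows immediately from \eqref{E:Def_Bdry} and \eqref{E:Fermi}. The gradient formula $\nabla\Phi_i=\bar\nu-\varepsilon R\,\overline{\nabla_\Gamma g_i}$ is exactly $\nabla d=\bar\nu$ from \eqref{E:Fermi} plus \eqref{E:CEGr_TN}. The crucial collapse $R(x,t)\overline{\nabla_\Gamma g_i}(x,t)=\bar\tau_\varepsilon^i(x,t)$ at $x\in\Gamma_\varepsilon^i(t)$ uses precisely the fact that $d(x,t)=\varepsilon g_i(\pi(x,t),t)$ there, so $R$ specializes to the matrix in \eqref{E:TD_AuxVec}; this is the point you correctly flagged as needing care. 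The norm computation uses $|\bar\nu|=1$ and $\bar\tau_\varepsilon^i\cdot\bar\nu=0$, which is indeed the constant extension of \eqref{E:TD_AVTan}. Finally, the sign determination is right: $\Phi_1$ increases in the outward direction across $\Gamma_\varepsilon^1(t)$ (since $d<\varepsilon\bar{g}_1$ inside $\Omega_\varepsilon(t)$), while $\Phi_0$ increases inward across $\Gamma_\varepsilon^0(t)$, producing the factor $(-1)^{i+1}$. In effect, your proof replaces an external citation with a short self-contained computation, which is a small improvement in readability.
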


\begin{proof}
  We refer to \cite[Lemma 3.9]{Miu22_01} for the proof.
  Note that the paper \cite{Miu22_01} deals with a fixed surface in $\mathbb{R}^3$, but the proof given there is applicable to our case.
\end{proof}

\begin{lemma} \label{L:MTD_ONV}
  For $i=0,1$, $t\in[0,T]$, and $x\in\Gamma_\varepsilon^i(t)$, we have
  \begin{align} \label{E:MTD_ONV}
    V_\varepsilon(x,t) = \frac{(-1)^{i+1}}{\sqrt{1+\varepsilon^2|\bar{\tau}_\varepsilon^i(x,t)|^2}}\Bigl(\overline{V_\Gamma}+\varepsilon\,\overline{\partial^\circ g_i}+\varepsilon^2\bar{g}_i\bar{\tau}_\varepsilon^i\cdot\overline{\nabla_\Gamma V_\Gamma}\Bigr)(x,t).
  \end{align}
\end{lemma}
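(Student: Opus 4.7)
The plan is to parametrize $\Gamma_\varepsilon^i(t)$ by following a normal-flow parametrization of the underlying surface $\Gamma(t)$, compute the material velocity of this parametrization, and then dot it with $\nu_\varepsilon$ obtained from Lemma \ref{L:MTD_UON}. Specifically, with $\Phi_\nu\colon\Gamma_0\times[0,T]\to\mathbb{R}^n$ the normal flow given by \eqref{E:N_Flow}, define
\begin{align*}
  \Psi_\varepsilon^i(Y,t) = \Phi_\nu(Y,t)+\varepsilon g_i(\Phi_\nu(Y,t),t)\,\nu(\Phi_\nu(Y,t),t), \quad (Y,t)\in\Gamma_0\times[0,T],
\end{align*}
so that $\Psi_\varepsilon^i(\cdot,t)$ parametrizes $\Gamma_\varepsilon^i(t)$ by \eqref{E:Def_Bdry}. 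Since $V_\varepsilon$ is the scalar outer normal velocity, we have $V_\varepsilon(\Psi_\varepsilon^i(Y,t),t)=\partial_t\Psi_\varepsilon^i(Y,t)\cdot\nu_\varepsilon(\Psi_\varepsilon^i(Y,t),t)$ (any other parametrization would differ by a tangential vector and give the same normal component).

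Next I would compute $\partial_t\Psi_\varepsilon^i$. Writing $y=\Phi_\nu(Y,t)\in\Gamma(t)$, we have $\partial_tY\mapsto y$ satisfies $\partial_ty=V_\Gamma(y,t)\nu(y,t)$ by \eqref{E:N_Flow}, so the chain rule together with the definition \eqref{E:NTime_Der} of $\partial^\circ$ gives $\partial_t[g_i(y,t)]=\partial^\circ g_i(y,t)$, and Lemma \ref{L:Nu_NTD} yields $\partial_t[\nu(y,t)]=-\nabla_\Gamma V_\Gamma(y,t)$. Therefore
\begin{align*}
  \partial_t\Psi_\varepsilon^i(Y,t) = \bigl(V_\Gamma+\varepsilon\,\partial^\circ g_i\bigr)(y,t)\,\nu(y,t)-\varepsilon g_i(y,t)\,\nabla_\Gamma V_\Gamma(y,t).
\end{align*}

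Finally I would take the inner product with $\nu_\varepsilon$, using the expression \eqref{E:MTD_UON} from Lemma \ref{L:MTD_UON} and noting that $\bar{\nu}(x,t)=\nu(y,t)$ and $\bar{\tau}_\varepsilon^i(x,t)=\tau_\varepsilon^i(y,t)$ for $x=\Psi_\varepsilon^i(Y,t)\in\Gamma_\varepsilon^i(t)$ since both are constant extensions in the normal direction. The cross terms $\nu\cdot\tau_\varepsilon^i$ and $\nu\cdot\nabla_\Gamma V_\Gamma$ vanish by \eqref{E:TD_AVTan} and \eqref{E:TGr_Tang} respectively, leaving only the diagonal contributions $(V_\Gamma+\varepsilon\partial^\circ g_i)|\nu|^2=V_\Gamma+\varepsilon\partial^\circ g_i$ and $(-\varepsilon g_i\nabla_\Gamma V_\Gamma)\cdot(-\varepsilon\tau_\varepsilon^i)=\varepsilon^2 g_i\tau_\varepsilon^i\cdot\nabla_\Gamma V_\Gamma$. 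Collecting these and passing to the overline notation yields \eqref{E:MTD_ONV}.

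There is no real obstacle here: once the correct normal-flow lift $\Psi_\varepsilon^i$ is chosen so that the time derivative acts cleanly (producing the normal time derivative $\partial^\circ g_i$ and, via Lemma \ref{L:Nu_NTD}, the tangential term $-\nabla_\Gamma V_\Gamma$), everything reduces to expanding one dot product and using the two tangentiality relations. The only mildly subtle point is recognising that $V_\varepsilon$ can be read off from any smooth parametrization of $\Gamma_\varepsilon^i(t)$ via the normal component of its velocity, so we are free to use $\Psi_\varepsilon^i$ rather than an abstract transport map of $\Omega_\varepsilon(t)$.
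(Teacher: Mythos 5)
Your proposal is correct and takes essentially the same approach as the paper: the paper defines the same lift (calling it $\Phi_\varepsilon^i$, parametrized from $\Gamma_\varepsilon^i(0)$ via $Y=\pi(X,0)$ rather than directly from $\Gamma_0$, which is an inessential notational difference), computes the same time derivative using \eqref{E:N_Flow}, \eqref{E:NTime_Der}, and Lemma~\ref{L:Nu_NTD}, and takes the same inner product with $\nu_\varepsilon$ from Lemma~\ref{L:MTD_UON} using the same two tangentiality relations.
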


\begin{proof}
  Fix $i=0,1$.
  For $X\in\Gamma_\varepsilon^i(0)$ and $t\in[0,T]$, we set $Y=\pi(X,0)\in\Gamma(0)$ and
  \begin{align} \label{Pf_ONV:Map}
    \Phi_\varepsilon^i(X,t) = \Phi_\nu(Y,t)+\varepsilon g_i(\Phi_\nu(Y,t),t)\nu(\Phi_\nu(Y,t),t),
  \end{align}
  where $\Phi_\nu$ is the mapping given by \eqref{E:N_Flow}.
  Then since $\Phi_\nu(\cdot,t)$ is a diffeomorphism from $\Gamma_0$ onto $\Gamma(t)$ for each $t\in[0,T]$, and since the relation \eqref{E:Fermi} holds for all $(x,t)\in\overline{N_T}$ and $\Gamma_\varepsilon^i(t)$ is given by \eqref{E:Def_Bdry}, we observe that $\Phi_\varepsilon^i(\cdot,t)$ is a diffeomorphism from $\Gamma_\varepsilon^i(0)$ onto $\Gamma_\varepsilon^i(t)$ for each $t\in[0,T]$.
  Hence the outer normal velocity of $\Gamma_\varepsilon^i(t)$ is given by
  \begin{align*}
    V_\varepsilon(\Phi_\varepsilon^i(X,t),t) = \nu_\varepsilon(\Phi_\varepsilon^i(X,t),t)\cdot\partial_t\Phi_\varepsilon^i(X,t), \quad (X,t) \in \Gamma_\varepsilon(0)\times[0,T].
  \end{align*}
  Now let $x=\Phi_\varepsilon^i(X,t)\in\Gamma_\varepsilon^i(t)$ and $y=\pi(x,t)=\Phi_\nu(Y,t)\in\Gamma(t)$.
  We differentiate \eqref{Pf_ONV:Map} with respect to $t$ and use \eqref{E:N_Flow}, \eqref{E:NTime_Der}, and \eqref{E:Nu_NTD} to get
  \begin{align} \label{Pf_ONV:dt_Phiei}
    \partial_t\Phi_\varepsilon^i(X,t) = (V_\Gamma\nu)(y,t)+\varepsilon\{\partial^\circ g_i(y,t)\nu(y,t)-g_i(y,t)\nabla_\Gamma V_\Gamma(y,t)\}.
  \end{align}
  Hence, noting that $\bar{\eta}(x,t)=\eta(y,t)$ for a function $\eta$ on $\overline{S_T}$, we take the inner product of \eqref{E:MTD_UON} and \eqref{Pf_ONV:dt_Phiei} and use $|\nu|=1$ and $\tau_\varepsilon^i\cdot\nu=\nabla_\Gamma V_\Gamma\cdot\nu=0$ on $\overline{S_T}$ to obtain \eqref{E:MTD_ONV}.
\end{proof}

\section{Uniform a priori estimate for a solution to the thin domain problem} \label{S:Bulk}
The purpose of this section is to show a uniform a priori estimate for a classical solution to the heat equation in the moving thin domain.
To this end, we first consider an initial-boundary value problem of a parabolic equation
\begin{align} \label{E:Para_MTD}
  \left\{
  \begin{alignedat}{3}
    \partial_t\chi^\varepsilon-\sum_{i,j=1}^na_{ij}^\varepsilon\partial_i\partial_j\chi^\varepsilon+\sum_{i=1}^nb_i^\varepsilon\partial_i\chi^\varepsilon+c^\varepsilon\chi^\varepsilon &= f^\varepsilon &\quad &\text{in} &\quad &Q_{\varepsilon,T}, \\
    \partial_{\nu_\varepsilon}\chi^\varepsilon+\beta^\varepsilon\chi^\varepsilon &= \psi^\varepsilon &\quad &\text{on} &\quad &\partial_\ell Q_{\varepsilon,T}, \\
    \chi^\varepsilon|_{t=0} &= \chi_0^\varepsilon &\quad &\text{in} &\quad &\Omega_\varepsilon(0).
  \end{alignedat}
  \right.
\end{align}
Here $a_{ij}^\varepsilon$, $b_\varepsilon^i$, $c^\varepsilon$, and $f^\varepsilon$ are given functions defined on $Q_{\varepsilon,T}$, and $\beta^\varepsilon$ and $\psi^\varepsilon$ are given functions defined on $\partial_\ell Q_{\varepsilon,T}$, and $\chi_0^\varepsilon$ is a given initial data defined on $\overline{\Omega_\varepsilon(0)}$.
Also, we write $\partial_{\nu_\varepsilon}=\nu_\varepsilon\cdot\nabla$ for the outer normal derivative on $\partial_\ell Q_{\varepsilon,T}$.
We assume that
\begin{align} \label{E:Weak_Para}
  \begin{gathered}
    a_{ij}^\varepsilon(x,t) = a_{ji}^\varepsilon(x,t) \quad\text{for all}\quad (x,t)\in Q_{\varepsilon,T}, \, i,j=1,\dots,n, \\
    \sum_{i,j=1}^na_{ij}^\varepsilon(x,t)\xi_i\xi_j \geq 0 \quad\text{for all}\quad (x,t)\in Q_{\varepsilon,T}, \, \xi=(\xi_1,\dots,\xi_n)^T\in\mathbb{R}^n
  \end{gathered}
\end{align}
and say that $\chi^\varepsilon$ is a classical solution to \eqref{E:Para_MTD} if
\begin{align*}
  \chi^\varepsilon \in C(\overline{Q_{\varepsilon,T}}), \quad \partial_i\chi^\varepsilon \in C(Q_{\varepsilon,T}\cup\partial_\ell Q_{\varepsilon,T}), \quad \partial_t\chi^\varepsilon, \partial_i\partial_j\chi^\varepsilon \in C(Q_{\varepsilon,T})
\end{align*}
for all $i,j=1,\dots,n$ and $\chi^\varepsilon$ satisfies \eqref{E:Para_MTD} at each point of $\overline{Q_{\varepsilon,T}}$.

\begin{theorem} \label{T:Uni_Para}
  Let $\varepsilon\in(0,\varepsilon_0)$ and $\chi^\varepsilon$ be a classical solution to \eqref{E:Para_MTD}.
  Suppose that \eqref{E:Weak_Para} holds and there exist constants $c_1,c_2>0$ independent of $\varepsilon$ such that
  \begin{align} \label{E:UP_Corf}
    \max_{i,j=1,\dots,n}|a_{ij}^\varepsilon(x,t)| \leq c_1, \quad \max_{i=1,\dots,n}|b_i^\varepsilon(x,t)| \leq c_1, \quad |c^\varepsilon(x,t)| \leq c_1
  \end{align}
  for all $(x,t)\in Q_{\varepsilon,T}$ and
  \begin{align} \label{E:UP_Bdry}
    \beta^\varepsilon(x,t) \geq -c_2\varepsilon \quad\text{for all}\quad (x,t) \in \partial_\ell Q_{\varepsilon,T}.
  \end{align}
  Then there exists a constant $c_T>0$ depending on $T$ but independent of $\varepsilon$, $\chi_0^\varepsilon$, $f^\varepsilon$, $\psi^\varepsilon$, and $\chi^\varepsilon$ such that
  \begin{align} \label{E:Uni_Para}
    \sup_{\overline{Q_{\varepsilon,T}}}|\chi^\varepsilon| \leq c_T\left(\sup_{\overline{\Omega_\varepsilon(0)}}|\chi_0^\varepsilon|+\sup_{Q_{\varepsilon,T}}|f^\varepsilon|+\frac{1}{\varepsilon}\sup_{\partial_\ell Q_{\varepsilon,T}}|\psi^\varepsilon|\right)
  \end{align}
  provided that the right-hand side is finite.
\end{theorem}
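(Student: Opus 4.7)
I would prove this estimate by constructing a classical supersolution $w$ based on a geometric barrier and then invoking the parabolic maximum principle for the Robin problem; applying the same argument to $-\chi^\varepsilon$ gives the sup bound on $|\chi^\varepsilon|$.

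The crux is the construction of an auxiliary function $q\in C^2(\overline{Q_{\varepsilon,T}})$ that is of order one, has $C^2$-bounds uniform in $\varepsilon$, and whose outward $\nu_\varepsilon$-derivative on $\partial_\ell Q_{\varepsilon,T}$ is bounded below by $c\varepsilon$. Concretely I would take
\[
q(x,t)=1+A\bigl(d(x,t)-\varepsilon\overline{m}(x,t)\bigr)^2,\qquad m=\frac{g_0+g_1}{2},
\]
with $A>0$ a constant to be chosen large; here $\overline{m}$ is the constant extension of $m$ in the normal direction of $\Gamma(t)$. By Lemmas~\ref{L:R_Norm}--\ref{L:CE_Lap} and the smoothness of $g_0,g_1$ one has $1\leq q\leq 2$ and $|\partial_tq|+|\nabla q|+|\nabla^2q|\leq C$ uniformly in $\varepsilon$. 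On $\Gamma_\varepsilon^i(t)$ we have $d=\varepsilon\overline{g_i}$, whence $d-\varepsilon\overline{m}=(-1)^{i+1}\varepsilon g/2$, and a direct computation using Lemma~\ref{L:MTD_UON} together with $\bar\tau_\varepsilon^i\cdot\bar\nu=0$ (see \eqref{E:TD_AVTan}) and $\nabla\overline{m}\cdot\bar\nu=0$ (Lemma~\ref{L:CEGr_TN}) gives
\[
\partial_{\nu_\varepsilon}q=\frac{A\,\varepsilon\,g(\pi(x,t),t)}{\sqrt{1+\varepsilon^2|\bar\tau_\varepsilon^i|^2}}\bigl(1+O(\varepsilon^2)\bigr)\geq c_q\varepsilon\quad\text{on }\partial_\ell Q_{\varepsilon,T}
\]
for $\varepsilon$ small, with $c_q>0$ independent of $\varepsilon$ thanks to \eqref{E:G_inf}. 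The constant $A$ is then fixed large enough to ensure $c_q>2c_2$.

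Given $q$, I would define the supersolution
\[
w(x,t)=e^{\lambda t}\Bigl(M_0+\tfrac{M_2}{\varepsilon}\Bigr)q(x,t),
\]
with $\lambda$ large enough that $\lambda-c_1-C_q>0$, where $C_q$ is the uniform $L^\infty$-bound on $|\partial_tq|+\sum_{i,j}|a_{ij}^\varepsilon\partial_i\partial_jq|+\sum_i|b_i^\varepsilon\partial_iq|$ (finite by \eqref{E:UP_Corf} and the $C^2$-bounds on $q$), and
\[
M_0=\|\chi_0^\varepsilon\|_{C(\overline{\Omega_\varepsilon(0)})}+\tfrac{1}{\lambda-c_1-C_q}\|f^\varepsilon\|_{C(Q_{\varepsilon,T})},\qquad M_2=\tfrac{1}{c_q-2c_2}\|\psi^\varepsilon\|_{C(\partial_\ell Q_{\varepsilon,T})}.
\]
A direct calculation using $q\geq1$, $q\leq 2$, \eqref{E:UP_Bdry} and the bounds on $q$ shows that $Lw\geq|f^\varepsilon|$ in $Q_{\varepsilon,T}$, $\partial_{\nu_\varepsilon}w+\beta^\varepsilon w\geq|\psi^\varepsilon|$ on $\partial_\ell Q_{\varepsilon,T}$, and $w(\cdot,0)\geq|\chi_0^\varepsilon|$ in $\Omega_\varepsilon(0)$. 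Hence $u=w-\chi^\varepsilon$ satisfies $Lu\geq0$, $\partial_{\nu_\varepsilon}u+\beta^\varepsilon u\geq0$ and $u|_{t=0}\geq0$. To apply the parabolic maximum principle despite $\beta^\varepsilon$ possibly being negative, I would perform the further conjugation $\tilde u=e^{-\gamma q}u$ with $\gamma=c_2/c_q$, which makes the transformed Robin coefficient $\gamma\partial_{\nu_\varepsilon}q+\beta^\varepsilon$ non-negative; the standard maximum principle then yields $\tilde u\geq0$, hence $w\geq\chi^\varepsilon$. Applying the identical argument to $-\chi^\varepsilon$ gives $|\chi^\varepsilon|\leq w\leq 2e^{\lambda T}(M_0+M_2/\varepsilon)$, which is \eqref{E:Uni_Para} with $c_T=2e^{\lambda T}\max\{1,(\lambda-c_1-C_q)^{-1},(c_q-2c_2)^{-1}\}$.

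The main obstacle is precisely the barrier construction. Because Lemma~\ref{L:MTD_UON} shows that $\nu_\varepsilon$ has opposite orientations relative to $\bar\nu$ on the inner and outer boundaries of $\Omega_\varepsilon(t)$, no function monotone in $d$ can have an outward $\nu_\varepsilon$-derivative of consistent sign on $\partial_\ell Q_{\varepsilon,T}$; the quadratic shift by the midline $\varepsilon\overline{m}$ is what turns $q$ into a barrier pointing outward on both components, and the uniform lower bound $g\geq c>0$ from \eqref{E:G_inf} is what produces the quantitative lower bound on $\partial_{\nu_\varepsilon}q$. Moreover, the fact that one cannot do better than $\partial_{\nu_\varepsilon}q=O(\varepsilon)$ while retaining $C^2$-bounds on $q$ uniform in $\varepsilon$ is geometric in nature---the only available thickness is $O(\varepsilon)$---and it is precisely what forces the factor $1/\varepsilon$ on the boundary term in \eqref{E:Uni_Para} and the hypothesis $\beta^\varepsilon\geq-c_2\varepsilon$ rather than a uniform lower bound.
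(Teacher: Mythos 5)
Your proof takes essentially the same approach as the paper's: the key auxiliary function, a quadratic in $d$ with critical point at the midline $d=\varepsilon\overline{m}$ and outward $\nu_\varepsilon$-derivative of exact order $\varepsilon$ on both components of $\partial\Omega_\varepsilon(t)$, coincides (up to a constant multiple and an $O(\varepsilon^2)$ shift) with the paper's $\sigma_\varepsilon=(d-\varepsilon\bar g_0)(d-\varepsilon\bar g_1)=(d-\varepsilon\overline{m})^2-\varepsilon^2\bar g^2/4$, and the subsequent exponential conjugation to absorb the possibly negative Robin coefficient followed by the parabolic maximum principle is the same mechanism. The paper's $\sigma_\varepsilon$ vanishes on $\partial_\ell Q_{\varepsilon,T}$ (so $\psi^\varepsilon$ is unchanged under conjugation) and the paper conjugates the unknown and subtracts a constant rather than constructing an explicit supersolution $w$, but these are presentational differences; you should also note that your final invocation of the ``standard maximum principle'' silently requires an additional $e^{-\mu t}$ conjugation to make the transformed zeroth-order coefficient nonnegative, which the paper handles explicitly via $\widehat C^\varepsilon=C^\varepsilon+c_5+1$ and the factor $e^{-(c_5+1)t}$ in $Z^\varepsilon_\pm$.
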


Note that we do not assume that the coefficients and given data in \eqref{E:Para_MTD} are continuous on their domains (but $\chi_0^\varepsilon=\chi^\varepsilon|_{t=0}$ must be continuous on $\overline{\Omega_\varepsilon(0)}$ by the continuity of $\chi^\varepsilon$).
This is not important in this paper, but it may be useful for other problems.

To prove Theorem \ref{T:Uni_Para}, we follow a standard argument of the proof of an a priori estimate based on the maximum principle (see e.g. \cite{LaSoUr68,Lie96}), but it is necessary to analyze carefully the dependence of coefficients and constants on $\varepsilon$.
We first introduce an auxiliary function and then give the proof of Theorem \ref{T:Uni_Para}.
Recall that we write $\bar{\eta}$ for the constant extension of a function $\eta$ on $\overline{S_T}$ in the normal direction of $\Gamma(t)$.

\begin{lemma} \label{L:Uni_Aux}
  For $(x,t)\in\overline{Q_{\varepsilon,T}}$ we define
  \begin{align} \label{E:UAux_Def}
    \sigma_\varepsilon(x,t) = \{d(x,t)-\varepsilon\bar{g}_0(x,t)\}\{d(x,t)-\varepsilon\bar{g}_1(x,t)\}.
  \end{align}
  Then there exists a constant $c_3>0$ independent of $\varepsilon$ such that
  \begin{align} \label{E:UAux_In}
      |\sigma_\varepsilon(x,t)| \leq c_3\varepsilon^2, \quad |\partial_t\sigma_\varepsilon(x,t)| \leq c_3\varepsilon, \quad |\nabla\sigma_\varepsilon(x,t)| \leq c_3\varepsilon, \quad |\nabla^2\sigma_\varepsilon(x,t)| \leq c_3
  \end{align}
  for all $(x,t)\in\overline{Q_{\varepsilon,T}}$ and
  \begin{align} \label{E:UAux_Bdry}
    \nu_\varepsilon(x,t)\cdot\nabla \sigma_\varepsilon(x,t) \geq c_3\varepsilon \quad\text{for all}\quad (x,t)\in\partial_\ell Q_{\varepsilon,T}.
  \end{align}
\end{lemma}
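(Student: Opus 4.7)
The plan is to exploit the factored form of $\sigma_\varepsilon$ together with the fact that inside $\overline{Q_{\varepsilon,T}}$ the two factors $d-\varepsilon\bar{g}_0$ and $d-\varepsilon\bar{g}_1$ have opposite signs and each vanishes to first order in $\varepsilon$. By the very definition of $\Omega_\varepsilon(t)$, every $(x,t)\in\overline{Q_{\varepsilon,T}}$ satisfies $\varepsilon\bar{g}_0(x,t)\leq d(x,t)\leq\varepsilon\bar{g}_1(x,t)$, so
\[
|d(x,t)-\varepsilon\bar{g}_i(x,t)| \leq \varepsilon\bigl(\bar{g}_1(x,t)-\bar{g}_0(x,t)\bigr) = \varepsilon\bar{g}(x,t)
\]
is bounded by $c\varepsilon$ on $\overline{Q_{\varepsilon,T}}$ thanks to the smoothness of $g_0,g_1$ on $\overline{S_T}$. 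This yields $|\sigma_\varepsilon|\leq c\varepsilon^2$ immediately.

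For the remaining bounds in \eqref{E:UAux_In} I would expand by the product rule and invoke the earlier constant-extension lemmas. Using $\nabla d=\bar{\nu}$ from \eqref{E:Fermi}, $\partial_t d=-\overline{V_\Gamma}$ from \eqref{E:SDdt_TN}, and Lemmas \ref{L:CEGr_TN}, \ref{L:CEGr_2nd}, and \ref{L:CEdt_TN} applied to $g_i$, the quantities $\partial_t(d-\varepsilon\bar{g}_i)$, $\nabla(d-\varepsilon\bar{g}_i)$, and $\nabla^2(d-\varepsilon\bar{g}_i)$ are uniformly bounded on $\overline{N_T}$, independently of $\varepsilon$. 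In $\partial_t\sigma_\varepsilon$ and $\nabla\sigma_\varepsilon$ each resulting summand contains at least one untouched factor $d-\varepsilon\bar{g}_i$ of size $O(\varepsilon)$, which supplies the required power of $\varepsilon$. For $\nabla^2\sigma_\varepsilon$ the worst summands put one derivative on each factor, giving a product of two $O(1)$ quantities, so $|\nabla^2\sigma_\varepsilon|\leq c$; the remaining summands are better by one factor of $\varepsilon$.

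The core of the lemma is the boundary lower bound \eqref{E:UAux_Bdry}. On $\Gamma_\varepsilon^i(t)$ we have $d=\varepsilon\bar{g}_i$, so the $i$-th factor of $\sigma_\varepsilon$ vanishes and the product rule collapses to
\[
\nabla\sigma_\varepsilon = (d-\varepsilon\bar{g}_{1-i})\,\nabla(d-\varepsilon\bar{g}_i) = (-1)^{1-i}\varepsilon\bar{g}\,\nabla(d-\varepsilon\bar{g}_i) \quad\text{on}\quad \Gamma_\varepsilon^i(t).
\]
Taking the inner product with $\nu_\varepsilon$ as given by Lemma \ref{L:MTD_UON}, and using $\nabla d=\bar{\nu}$, $|\bar{\nu}|=1$, and the tangentiality identity \eqref{E:TD_AVTan}, I compute
\[
\nu_\varepsilon\cdot\nabla d = \frac{(-1)^{i+1}}{\sqrt{1+\varepsilon^2|\bar{\tau}_\varepsilon^i|^2}},
\]
while $\varepsilon\,\nu_\varepsilon\cdot\nabla\bar{g}_i=O(\varepsilon)$ by \eqref{E:CEGr_Bound}. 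Multiplying out and observing that the sign factors $(-1)^{1-i}$ and $(-1)^{i+1}$ combine to $+1$, I arrive at
\[
\nu_\varepsilon\cdot\nabla\sigma_\varepsilon = \frac{\varepsilon\bar{g}}{\sqrt{1+\varepsilon^2|\bar{\tau}_\varepsilon^i|^2}} + O(\varepsilon^2) \quad\text{on}\quad \Gamma_\varepsilon^i(t).
\]
The uniform lower bound $\bar{g}\geq c>0$ from \eqref{E:G_inf} together with $\varepsilon<\varepsilon_0$ sufficiently small then yields \eqref{E:UAux_Bdry}.

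The only delicate aspect is the sign bookkeeping in this last step: both $\nu_\varepsilon$ and the surviving factor $d-\varepsilon\bar{g}_{1-i}$ flip sign between the inner boundary ($i=0$) and the outer boundary ($i=1$), and the lemma hinges on these two sign flips cancelling so that the leading term $\varepsilon\bar{g}/\sqrt{1+\varepsilon^2|\bar{\tau}_\varepsilon^i|^2}$ is positive on all of $\partial_\ell Q_{\varepsilon,T}$. Once that cancellation is explicit, the constant $c_3$ may be chosen as, say, one half of $\inf_{\overline{S_T}}\bar{g}$.
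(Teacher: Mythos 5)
Your proof is correct and follows the same route as the paper: factor $\sigma_\varepsilon$, note that one factor vanishes on each component of $\partial\Omega_\varepsilon(t)$, and exploit the sign cancellation between the surviving factor $(-1)^{1-i}\varepsilon\bar{g}$ and $\nu_\varepsilon$ to conclude the normal derivative is positive of order $\varepsilon$. One refinement you missed: rather than splitting $\nu_\varepsilon\cdot\nabla(d-\varepsilon\bar{g}_i)$ into $\nu_\varepsilon\cdot\bar{\nu}$ plus an uncontrolled $O(\varepsilon)$ remainder, note that on $\Gamma_\varepsilon^i(t)$ we have $\nabla\bar{g}_i=R\,\overline{\nabla_\Gamma g_i}=\bar{\tau}_\varepsilon^i$ by \eqref{E:CEGr_TN} and \eqref{E:Inv_IdW}, so $\nabla(d-\varepsilon\bar{g}_i)=\bar{\nu}-\varepsilon\bar{\tau}_\varepsilon^i$ is exactly parallel to $(-1)^{i+1}\nu_\varepsilon$ by \eqref{E:MTD_UON}; this gives the exact identity $\nu_\varepsilon\cdot\nabla\sigma_\varepsilon=\varepsilon\bar{g}\sqrt{1+\varepsilon^2|\bar{\tau}_\varepsilon^i|^2}\geq\varepsilon\bar{g}$, with no remainder term of uncontrolled sign, so \eqref{E:UAux_Bdry} holds for all $\varepsilon\in(0,\varepsilon_0)$ without the additional smallness restriction on $\varepsilon_0$ that your $O(\varepsilon^2)$ error implicitly demands.
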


\begin{proof}
  We see that \eqref{E:UAux_In} holds by \eqref{E:CEGr_Bound}, \eqref{E:CG2_Bo}, \eqref{E:CEdt_Bound}, the smoothness of $d$ on $\overline{N_T}$ and of $g_0$ and $g_1$ on $\overline{S_T}$, and $|d|\leq c\varepsilon$ in $\overline{Q_{\varepsilon,T}}$.
  Also, since
  \begin{align*}
    \nabla \sigma_\varepsilon = (d-\varepsilon\bar{g}_1)\Bigl(\bar{\nu}-\varepsilon R\,\overline{\nabla_\Gamma g_0}\Bigr)+(d-\varepsilon\bar{g}_0)\Bigl(\bar{\nu}-\varepsilon R\,\overline{\nabla_\Gamma g_1}\Bigr) \quad\text{in}\quad \overline{N_T}
  \end{align*}
  by $\nabla d=\bar{\nu}$ in $\overline{N_T}$ and \eqref{E:CEGr_TN}, where $R$ is given by \eqref{E:Inv_IdW}, and since $d(x,t)=\varepsilon\bar{g}_i(x,t)$ for each $x\in\Gamma_\varepsilon^i(t)$, $t\in[0,T]$, and $i=0,1$ by \eqref{E:Def_Bdry}, we have
  \begin{align*}
    \nabla \sigma_\varepsilon(x,t) = (-1)^{i+1}\varepsilon\bar{g}(x,t)\{\bar{\nu}(x,t)-\varepsilon\bar{\tau}_\varepsilon^i(x,t)\}, \quad x\in\Gamma_\varepsilon^i(t), \, t\in[0,T], \, i=0,1,
  \end{align*}
  where $g=g_1-g_0$ and $\tau_\varepsilon^i$ is given by \eqref{E:TD_AuxVec} on $\overline{S_T}$.
  Using this equality, \eqref{E:MTD_UON}, and $\tau_\varepsilon^i\cdot\nu=0$ on $\overline{S_T}$, and then applying \eqref{E:G_inf}, we find that
  \begin{align*}
    \nu_\varepsilon(x,t)\cdot\nabla \sigma_\varepsilon(x,t) = \varepsilon\bar{g}(x,t)\sqrt{1+\varepsilon^2|\bar{\tau}_\varepsilon^i(x,t)|^2} \geq \varepsilon\bar{g}(x,t) \geq c\varepsilon
  \end{align*}
  for all $x\in\Gamma_\varepsilon^i(t)$, $t\in[0,T]$, and $i=0,1$.
  Hence \eqref{E:UAux_Bdry} is valid.
\end{proof}

\begin{proof}[Proof of Theorem \ref{T:Uni_Para}]
  Let $\chi^\varepsilon$ be a classical solution to \eqref{E:Para_MTD} and $\sigma_\varepsilon$ be the function given by \eqref{E:UAux_Def}.
  Also, let $c_1$ and $c_2$ be the constants appearing in \eqref{E:UP_Corf} and \eqref{E:UP_Bdry}, and $c_3$ be the constant given in Lemma \ref{L:Uni_Aux}.
  We define
  \begin{align*}
    c_4 = \frac{c_2+1}{c_3}, \quad \zeta^\varepsilon(x,t) = e^{-c_4\sigma_\varepsilon(x,t)}\chi^\varepsilon(x,t), \quad (x,t) \in \overline{Q_{\varepsilon,T}}.
  \end{align*}
  Then, noting that $\sigma_\varepsilon=0$ on $\partial_\ell Q_{\varepsilon,T}$, we observe that $\zeta^\varepsilon$ satisfies
  \begin{align} \label{Pf_UP:Zeta}
    \left\{
    \begin{alignedat}{3}
      \partial_t\zeta^\varepsilon-\sum_{i,j=1}^na_{ij}^\varepsilon\partial_i\partial_j\zeta^\varepsilon+\sum_{i=1}^nB_i^\varepsilon\partial_i\zeta^\varepsilon+C^\varepsilon\zeta^\varepsilon &= e^{-c_4\sigma_\varepsilon}f^\varepsilon &\quad &\text{in} &\quad &Q_{\varepsilon,T}, \\
      \partial_{\nu_\varepsilon}\zeta^\varepsilon+\gamma^\varepsilon\zeta^\varepsilon &= \psi^\varepsilon &\quad &\text{on} &\quad &\partial_\ell Q_{\varepsilon,T}, \\
      \zeta^\varepsilon|_{t=0} &= e^{-c_4\sigma_\varepsilon(\cdot,0)}\chi_0^\varepsilon &\quad &\text{in} &\quad &\Omega_\varepsilon(0),
    \end{alignedat}
    \right.
  \end{align}
  where the functions $B_\varepsilon^i$ and $C^\varepsilon$ on $Q_{\varepsilon,T}$ and $\gamma^\varepsilon$ on $\partial_\ell Q_{\varepsilon,T}$ are given by
  \begin{align*}
    B_i^\varepsilon &= -2c_4\sum_{j=1}^na_{ij}^\varepsilon\partial_j\sigma_\varepsilon+b_i^\varepsilon, \quad i=1,\dots,n, \\
    C^\varepsilon &= c_4\partial_t\sigma_\varepsilon-c_4\sum_{i,j=1}^na_{ij}^\varepsilon\partial_i\partial_j\sigma_\varepsilon-c_4^2\sum_{i,j=1}^na_{ij}^\varepsilon(\partial_i\sigma_\varepsilon)(\partial_j\sigma_\varepsilon)+c_4\sum_{i=1}^nb_i^\varepsilon\partial_i\sigma_\varepsilon+c^\varepsilon
  \end{align*}
  on $Q_{\varepsilon,T}$ and $\gamma^\varepsilon=c_4(\nu^\varepsilon\cdot\nabla \sigma_\varepsilon)+\beta^\varepsilon$ on $\partial_\ell Q_{\varepsilon,T}$.
  Moreover, $|C^\varepsilon|\leq c_5$ in $Q_{\varepsilon,T}$ by \eqref{E:UP_Corf} and \eqref{E:UAux_In}, where $c_5>0$ is a constant depending only on $c_1$, $c_2$, and $c_3$ and thus independent of $\varepsilon$.
  Also, it follows from \eqref{E:UP_Bdry}, \eqref{E:UAux_Bdry}, and $c_4=(c_2+1)/c_3$ that
  \begin{align} \label{Pf_UP:Ga_Ineq}
    \gamma^\varepsilon \geq c_4\cdot c_3\varepsilon-c_2\varepsilon = \varepsilon > 0 \quad\text{on}\quad \partial_\ell Q_{\varepsilon,T}.
  \end{align}
  Thus, setting
  \begin{align} \label{Pf_UP:C_hat}
    \widehat{C}^\varepsilon(x,t) = C^\varepsilon(x,t)+c_5+1 \geq -|C^\varepsilon(x,t)|+c_5+1 \geq 1
  \end{align}
  for $(x,t)\in Q_{\varepsilon,T}$ and
  \begin{align*}
    Z_\pm^\varepsilon(x,t) = \pm e^{-(c_5+1)t}\zeta^\varepsilon(x,t)-\left(\sup_{\overline{\Omega_\varepsilon(0)}}|e^{-c_4\sigma_\varepsilon(\cdot,0)}\chi_0^\varepsilon|+\sup_{Q_{\varepsilon,T}}|e^{-c_4\sigma_\varepsilon}f^\varepsilon|+\frac{1}{\varepsilon}\sup_{\partial_\ell Q_{\varepsilon,T}}|\psi^\varepsilon|\right)
  \end{align*}
  for $(x,t)\in\overline{Q_{\varepsilon,T}}$, we see that $Z_\pm^\varepsilon$ satisfies
  \begin{align} \label{Pf_UP:Z_Ineq}
    \left\{
    \begin{alignedat}{3}
      \partial_tZ_\pm^\varepsilon-\sum_{i,j=1}^na_{ij}^\varepsilon\partial_i\partial_jZ_\pm^\varepsilon+\sum_{i=1}^nB_i^\varepsilon\partial_iZ_\pm^\varepsilon+\widehat{C}^\varepsilon Z_\pm^\varepsilon &\leq 0 &\quad &\text{in} &\quad &Q_{\varepsilon,T}, \\
      \partial_{\nu_\varepsilon}Z_\pm^\varepsilon+\gamma^\varepsilon Z_\pm^\varepsilon &\leq 0 &\quad &\text{on} &\quad &\partial_\ell Q_{\varepsilon,T}, \\
      Z_\pm^\varepsilon|_{t=0} &\leq 0 &\quad &\text{in} &\quad &\Omega_\varepsilon(0).
    \end{alignedat}
    \right.
  \end{align}
  Note that, due to \eqref{Pf_UP:Ga_Ineq}, we need to multiply the supremum of $|\psi^\varepsilon|$ by $\varepsilon^{-1}$ in the definition of $Z_\pm^\varepsilon$ in order to get the second inequality of \eqref{Pf_UP:Z_Ineq}.
  Then, as in the proof of the maximum principle (see e.g. \cite{LaSoUr68,Lie96}), we can show that the maximum of $Z_\pm^\varepsilon$ on $\overline{Q_{\varepsilon,T}}$ must be nonpositive by using \eqref{E:Weak_Para} and \eqref{Pf_UP:Ga_Ineq}--\eqref{Pf_UP:Z_Ineq} and by noting that $\partial_{\nu_\varepsilon}Z_\pm^\varepsilon(x,t)\geq0$ if $Z_\pm^\varepsilon$ attains its maximum at $(x,t)\in\partial_\ell Q_{\varepsilon,T}$.
  Hence $Z_\pm^\varepsilon\leq 0$, i.e.
  \begin{align*}
    |\zeta^\varepsilon| \leq e^{(c_5+1)t}\left(\sup_{\overline{\Omega_\varepsilon(0)}}|e^{-c_4\sigma_\varepsilon(\cdot,0)}\chi_0^\varepsilon|+\sup_{Q_{\varepsilon,T}}|e^{-c_4\sigma_\varepsilon}f^\varepsilon|+\frac{1}{\varepsilon}\sup_{\partial_\ell Q_{\varepsilon,T}}|\psi^\varepsilon|\right)
  \end{align*}
  in $\overline{Q_{\varepsilon,T}}$.
  By this inequality, $\chi^\varepsilon=e^{c_4\sigma_\varepsilon}\zeta^\varepsilon$ in $\overline{Q_{\varepsilon,T}}$, and \eqref{E:UAux_In}, we conclude that \eqref{E:Uni_Para} holds with $c_T=c_6e^{(c_5+1)T}$ with a constant $c_6>0$ independent of $\varepsilon$.
\end{proof}

\begin{remark} \label{R:Uni_Para}
  By the above proof, one may expect to remove the factor $\varepsilon^{-1}$ in \eqref{E:Uni_Para} if \eqref{E:UP_Bdry} is replaced by $\beta^\varepsilon\geq-c$ on $\partial_\ell Q_{\varepsilon,T}$ with a constant $c>0$ independent of $\varepsilon$, but we must have another constant growing as $\varepsilon\to0$ in that case.
  Indeed, when $\beta^\varepsilon\geq-c$ on $\partial_\ell Q_{\varepsilon,T}$, we need to employ $\varepsilon^{-1}\sigma_\varepsilon$ instead of $\sigma_\varepsilon$ in the proof of Theorem \ref{T:Uni_Para} in order to get $\gamma^\varepsilon\geq 1$ on $\partial_\ell Q_{\varepsilon,T}$ instead of \eqref{Pf_UP:Ga_Ineq} for the coefficient $\gamma^\varepsilon$ in \eqref{Pf_UP:Zeta}.
  However, by taking $\varepsilon^{-1}\sigma_\varepsilon$ we must have $C^\varepsilon\geq-c\varepsilon^{-1}$ in $Q_{\varepsilon,T}$ for the coefficient $C^\varepsilon$ in \eqref{Pf_UP:Zeta} even if we carefully compute the derivatives of $\sigma_\varepsilon$.
  Hence, if we intend to remove $\varepsilon^{-1}$ in \eqref{E:Uni_Para}, then we need to replace $c_T=c_6e^{(c_5+1)T}$ by $c_6e^{(c_5+1)T/\varepsilon}$ which grows as $\varepsilon\to0$ much faster than $\varepsilon^{-1}$.
\end{remark}

Now we consider the heat equation with source terms
\begin{align} \label{E:Heat_Gen}
  \left\{
  \begin{alignedat}{3}
    \partial_t\rho^\varepsilon-k_d\Delta\rho^\varepsilon &= f^\varepsilon &\quad &\text{in} &\quad &Q_{\varepsilon,T}, \\
    \partial_{\nu_\varepsilon}\rho^\varepsilon+k_d^{-1}V_\varepsilon\rho^\varepsilon &= \psi^\varepsilon &\quad &\text{on} &\quad &\partial_\ell Q_{\varepsilon,T}, \\
    \rho^\varepsilon|_{t=0} &= \rho_0^\varepsilon &\quad &\text{in} &\quad &\Omega_\varepsilon(0).
  \end{alignedat}
  \right.
\end{align}
In this case, the outer normal velocity $V_\varepsilon$ of $\partial\Omega_\varepsilon(t)$ is of order one with respect to $\varepsilon$ by \eqref{E:MTD_ONV}.
Thus, in order to apply Theorem \ref{T:Uni_Para}, we need to introduce an auxiliary function to eliminate the zeroth order term of $V_\varepsilon$.

\begin{theorem} \label{T:Uni_Heat}
  Let $\varepsilon\in(0,\varepsilon_0)$ and
  \begin{align*}
    \rho_0^\varepsilon \in C(\overline{\Omega_\varepsilon(0)}), \quad f^\varepsilon \in C(Q_{\varepsilon,T}), \quad \psi^\varepsilon \in C(\partial_\ell Q_{\varepsilon,T}).
  \end{align*}
  Also, let $\rho^\varepsilon$ be a classical solution to \eqref{E:Heat_Gen}.
  Then there exists a constant $c_T>0$ depending on $T$ but independent of $\varepsilon$, $\rho_0^\varepsilon$, $f^\varepsilon$, $\psi^\varepsilon$, and $\rho^\varepsilon$ such that
  \begin{align} \label{E:Uni_Heat}
    \|\rho^\varepsilon\|_{C(\overline{Q_{\varepsilon,T}})} \leq c_T\left(\|\rho_0^\varepsilon\|_{C(\overline{\Omega_\varepsilon(0)})}+\|f^\varepsilon\|_{C(Q_{\varepsilon,T})}+\frac{1}{\varepsilon}\|\psi^\varepsilon\|_{C(\partial_\ell Q_{\varepsilon,T})}\right)
  \end{align}
  provided that the right-hand side is finite.
\end{theorem}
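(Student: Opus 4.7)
The plan is to reduce \eqref{E:Heat_Gen} to a problem of the form \eqref{E:Para_MTD} satisfying the hypotheses of Theorem \ref{T:Uni_Para}. The obstacle is that the boundary coefficient $k_d^{-1}V_\varepsilon$ is only of order one in $\varepsilon$ by Lemma \ref{L:MTD_ONV}, and can be negative on parts of $\partial_\ell Q_{\varepsilon,T}$, so \eqref{E:UP_Bdry} fails directly. The point of the idea sketched in Section \ref{SS:In_Idea} is that the leading term $(-1)^{i+1}\overline{V_\Gamma}$ of $V_\varepsilon$ on $\Gamma_\varepsilon^i(t)$ matches, up to a sign, the leading behavior $(-1)^{i+1}\bar{\nu}$ of $\nu_\varepsilon$ from Lemma \ref{L:MTD_UON}. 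Hence a single scalar function whose $\bar\nu$-derivative equals $-k_d^{-1}\overline{V_\Gamma}$ will cancel this leading term on both boundary components simultaneously.

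Concretely, I would set
\begin{align*}
  \Psi(x,t) = -k_d^{-1}d(x,t)\overline{V_\Gamma}(x,t), \quad (x,t)\in\overline{Q_{\varepsilon,T}},
\end{align*}
and introduce $\tilde{\rho}^\varepsilon = e^{-\Psi}\rho^\varepsilon$. Since $|d|\leq c\varepsilon$ on $\overline{Q_{\varepsilon,T}}$ and $\overline{V_\Gamma}$ is uniformly bounded, $\Psi$ is bounded uniformly in $(x,t)$ and $\varepsilon$, so $e^{\pm\Psi}$ are harmless multipliers. A direct computation shows that $\tilde{\rho}^\varepsilon$ solves a problem of the form \eqref{E:Para_MTD} with $a_{ij}^\varepsilon=k_d\delta_{ij}$, drift $b_i^\varepsilon=-2k_d\,\partial_i\Psi$, zeroth-order term $c^\varepsilon=\partial_t\Psi-k_d(|\nabla\Psi|^2+\Delta\Psi)$, boundary coefficient
\begin{align*}
  \tilde\beta^\varepsilon = \partial_{\nu_\varepsilon}\Psi + k_d^{-1}V_\varepsilon,
\end{align*}
and data $e^{-\Psi}f^\varepsilon$, $e^{-\Psi}\psi^\varepsilon$, $e^{-\Psi(\cdot,0)}\rho_0^\varepsilon$. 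Using $\nabla d=\bar\nu$, $\bar\nu\cdot\overline{\nabla_\Gamma V_\Gamma}=0$ from \eqref{E:CEGr_TN}, and the bounds in Lemmas \ref{L:R_Norm}--\ref{L:CEdt_TN} applied to $V_\Gamma$, one sees immediately that $\partial_i\Psi$, $\partial_t\Psi$, and $\Delta\Psi$ are uniformly bounded on $\overline{Q_{\varepsilon,T}}$, so \eqref{E:UP_Corf} holds.

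The crucial step is the verification that $\tilde\beta^\varepsilon\geq -c\varepsilon$, and this is the main technical obstacle. On $\Gamma_\varepsilon^i(t)$ we have $d=\varepsilon\bar g_i$, so
\begin{align*}
  \nabla\Psi = -k_d^{-1}\bigl(\bar\nu\,\overline{V_\Gamma}+\varepsilon\bar g_i\nabla\overline{V_\Gamma}\bigr).
\end{align*}
Combining this with \eqref{E:MTD_UON}, using $\bar\nu\cdot\nabla\overline{V_\Gamma}=0$ and $\bar\tau_\varepsilon^i\cdot\bar\nu=0$ (from \eqref{E:TD_AVTan}), gives
\begin{align*}
  \partial_{\nu_\varepsilon}\Psi = \frac{(-1)^{i+1}k_d^{-1}}{\sqrt{1+\varepsilon^2|\bar\tau_\varepsilon^i|^2}}\Bigl(-\overline{V_\Gamma}+\varepsilon^2\bar g_i\bar\tau_\varepsilon^i\cdot\nabla\overline{V_\Gamma}\Bigr).
\end{align*}
Adding $k_d^{-1}V_\varepsilon$ from \eqref{E:MTD_ONV}, the $\overline{V_\Gamma}$ terms cancel and one finds
\begin{align*}
  \tilde\beta^\varepsilon = \frac{(-1)^{i+1}k_d^{-1}}{\sqrt{1+\varepsilon^2|\bar\tau_\varepsilon^i|^2}}\Bigl(\varepsilon\,\overline{\partial^\circ g_i}+\varepsilon^2\bar g_i\bar\tau_\varepsilon^i\cdot\bigl(\overline{\nabla_\Gamma V_\Gamma}+\nabla\overline{V_\Gamma}\bigr)\Bigr),
\end{align*}
which is $O(\varepsilon)$ uniformly thanks to the smoothness of $g_i,V_\Gamma$ on $\overline{S_T}$ and \eqref{E:TD_AVBo}. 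In particular $\tilde\beta^\varepsilon\geq -c\varepsilon$, so \eqref{E:UP_Bdry} is satisfied.

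With all hypotheses of Theorem \ref{T:Uni_Para} in force, it yields
\begin{align*}
  \|\tilde\rho^\varepsilon\|_{C(\overline{Q_{\varepsilon,T}})} \leq c_T\Bigl(\|e^{-\Psi(\cdot,0)}\rho_0^\varepsilon\|_{C(\overline{\Omega_\varepsilon(0)})}+\|e^{-\Psi}f^\varepsilon\|_{C(Q_{\varepsilon,T})}+\varepsilon^{-1}\|e^{-\Psi}\psi^\varepsilon\|_{C(\partial_\ell Q_{\varepsilon,T})}\Bigr),
\end{align*}
and \eqref{E:Uni_Heat} follows at once by writing $\rho^\varepsilon=e^{\Psi}\tilde\rho^\varepsilon$ and absorbing the uniform bound on $e^{\pm\Psi}$ into $c_T$. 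The hardest part is the cancellation computation for $\tilde\beta^\varepsilon$ just described; the rest is bookkeeping once $\Psi$ is chosen.
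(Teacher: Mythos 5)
Your proposal is correct and follows essentially the same route as the paper's own proof: you define the same auxiliary function $\Psi=-k_d^{-1}d\,\overline{V_\Gamma}$ (the paper calls it $\lambda$), perform the identical exponential change of dependent variable, verify \eqref{E:UP_Corf} and \eqref{E:UP_Bdry} by the same cancellation computation for the boundary coefficient, and then invoke Theorem \ref{T:Uni_Para}.
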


\begin{proof}
  Let $\rho^\varepsilon$ be a classical solution to \eqref{E:Heat_Gen}.
  We define
  \begin{align*}
    \lambda(x,t) = -k_d^{-1}d(x,t)\overline{V_\Gamma}(x,t), \quad \chi^\varepsilon(x,t) = e^{-\lambda(x,t)}\rho^\varepsilon(x,t)
  \end{align*}
  for $(x,t)\in\overline{Q_{\varepsilon,T}}$.
  Then $\chi^\varepsilon$ satisfies \eqref{E:Para_MTD} with
  \begin{align*}
    a_{ij}^\varepsilon &= k_d\delta_{ij}, \quad b_i^\varepsilon = -2k_d\partial_i\lambda, \quad c^\varepsilon = \partial_t\lambda-k_d(\Delta\lambda+|\nabla\lambda|^2) \quad\text{in}\quad Q_{\varepsilon,T}, \\
    \beta^\varepsilon &= \nu_\varepsilon\cdot\nabla\lambda+k_d^{-1}V_\varepsilon \quad\text{on}\quad \partial_\ell Q_{\varepsilon,T},
  \end{align*}
  and $f^\varepsilon$, $\psi^\varepsilon$, and $\chi_0^\varepsilon$ replaced by $e^{-\lambda}f^\varepsilon$, $e^{-\lambda}\psi^\varepsilon$, and $e^{-\lambda(\cdot,0)}\rho_0^\varepsilon$.
  Here $\delta_{ij}$ is the Kronecker delta.
  Then, since $d$ and $V_\Gamma$ are smooth on $\overline{N_T}$ and $\overline{S_T}$, we see by \eqref{E:CEGr_Bound}, \eqref{E:CG2_Bo}, and \eqref{E:CEdt_Bound} that \eqref{E:UP_Corf} holds with a constant $c_1>0$ independent of $\varepsilon$.
  Also, since
  \begin{align*}
    \nabla\lambda = -k_d^{-1}\overline{V_\Gamma\nu}-k_d^{-1}d\,\nabla\overline{V_\Gamma} \quad\text{in}\quad \overline{Q_{\varepsilon,T}}
  \end{align*}
  by $\nabla d=\bar{\nu}$ in $\overline{N_T}$, and since $\nu_\varepsilon$ is of the form \eqref{E:MTD_UON}, we have
  \begin{align*}
    \nu_\varepsilon\cdot\nabla\lambda = \frac{(-1)^{i+1}k_d^{-1}}{\sqrt{1+\varepsilon^2|\bar{\tau}_\varepsilon^i|^2}}\Bigl(-\overline{V_\Gamma}+\varepsilon^2\bar{g}_i\bar{\tau}_\varepsilon^i\cdot\nabla\overline{V_\Gamma}\Bigr) \quad\text{on}\quad \Gamma_\varepsilon^i(t)
  \end{align*}
  for $t\in(0,T]$ and $i=0,1$.
  Here $\tau_\varepsilon^i$ is given by \eqref{E:TD_AuxVec} and we also used
  \begin{align*}
    |\nu| = 1, \, \tau_\varepsilon^i\cdot\nu = 0 \quad\text{on}\quad \overline{S_T}, \quad \bar{\nu}\cdot\nabla\overline{V_\Gamma} = 0 \quad\text{in}\quad \overline{N_T}, \quad d = \varepsilon\bar{g}_i \quad\text{on}\quad \Gamma_\varepsilon^i(t),
  \end{align*}
  see \eqref{E:TD_AVTan} for the second equality.
  By the above equality and \eqref{E:MTD_ONV}, we get
  \begin{align*}
    \beta_\varepsilon = \frac{(-1)^{i+1}k_d^{-1}}{\sqrt{1+\varepsilon^2|\bar{\tau}_\varepsilon^i|^2}}\left\{\varepsilon\,\overline{\partial^\circ g_i}+\varepsilon^2\bar{g}_i\bar{\tau}_\varepsilon^i\cdot\Bigl(\nabla\overline{V_\Gamma}+\overline{\nabla_\Gamma V_\Gamma}\Bigr)\right\} \quad\text{on}\quad \Gamma_\varepsilon^i(t)
  \end{align*}
  for $t\in(0,T]$ and $i=0,1$.
  Thus, by \eqref{E:CEGr_Bound}, \eqref{E:TD_AVBo}, and the smoothness of $g_0$, $g_1$, and $V_\Gamma$ on $\overline{S_T}$, we find that $|\beta_\varepsilon|\leq c\varepsilon$ on $\partial_\ell Q_{\varepsilon,T}$ with a constant $c>0$ independent of $\varepsilon$.
  Hence \eqref{E:UP_Bdry} is valid and we can apply Theorem \ref{T:Uni_Para} to obtain
  \begin{align*}
    \sup_{\overline{Q_{\varepsilon,T}}}|\chi^\varepsilon| \leq c_T\left(\sup_{\overline{\Omega_\varepsilon(0)}}|e^{-\lambda(\cdot,0)}\rho_0^\varepsilon|+\sup_{Q_{\varepsilon,T}}|e^{-\lambda}f^\varepsilon|+\frac{1}{\varepsilon}\sup_{\partial_\ell Q_{\varepsilon,T}}|e^{-\lambda}\psi^\varepsilon|\right).
  \end{align*}
  Recall that $\chi^\varepsilon$ satisfies \eqref{E:Para_MTD} with $f^\varepsilon$, $\psi^\varepsilon$, and $\chi_0^\varepsilon$ replaced by $e^{-\lambda}f^\varepsilon$, $e^{-\lambda}\psi^\varepsilon$, and $e^{-\lambda(\cdot,0)}\rho_0^\varepsilon$.
  Now we observe that $\rho^\varepsilon=e^\lambda\chi^\varepsilon$ in $\overline{Q_{\varepsilon,T}}$ and $\lambda=-k_d^{-1}d\,\overline{V_\Gamma}$ is bounded on $\overline{N_T}$ independently of $\varepsilon$.
  Therefore, we obtain \eqref{E:Uni_Heat} by the above estimate for $\chi^\varepsilon$.
\end{proof}

\begin{remark} \label{R:Uni_Heat}
  The idea of the proof of Theorem \ref{T:Uni_Heat} also applies to the problem \eqref{E:Para_MTD} under the conditions \eqref{E:UP_Corf} and $\beta^\varepsilon\geq-c$ on $\partial_\ell Q_{\varepsilon,T}$ instead of \eqref{E:UP_Bdry}, provided that there exists a function $\omega^\varepsilon$ on $\overline{Q_{\varepsilon,T}}$ such that $\omega^\varepsilon$, $\partial_t\omega^\varepsilon$, $\nabla\omega^\varepsilon$, and $\nabla^2\omega^\varepsilon$ are uniformly bounded on $\overline{Q_{\varepsilon,T}}$ and that $\beta^\varepsilon-(-1)^{i+1}\omega^\varepsilon\geq-c\varepsilon$ on $\Gamma_\varepsilon^i(t)$ for all $t\in(0,T]$ and $i=0,1$.
  In this case, for a classical solution $\chi^\varepsilon$ to \eqref{E:Para_MTD} we see that $e^{d\omega^\varepsilon}\chi^\varepsilon$ satisfies a problem similar to \eqref{E:Para_MTD} with coefficients in $Q_{\varepsilon,T}$ satisfying \eqref{E:UP_Corf} and with the coefficient on $\partial_\ell Q_{\varepsilon,T}$ given by
  \begin{align*}
    \hat{\beta}^\varepsilon=\beta^\varepsilon-(\nu^\varepsilon\cdot\bar{\nu})\omega^\varepsilon-\varepsilon\bar{g}_i(\nu^\varepsilon\cdot\nabla\omega^\varepsilon) \quad\text{on}\quad \Gamma_\varepsilon^i(t)
  \end{align*}
  for $t\in(0,T]$ and $i=0,1$.
  Then it follows from \eqref{E:TD_AVTan}--\eqref{E:MTD_UON}, the mean value theorem for $(1+s)^{-1/2}$ with $s\in\mathbb{R}$, and the assumption on $\omega^\varepsilon$ that
  \begin{align*}
    \hat{\beta}^\varepsilon &= \beta^\varepsilon-(-1)^{i+1}\omega^\varepsilon+(-1)^{i+1}\omega^\varepsilon\left(1-\frac{1}{\sqrt{1+\varepsilon^2|\bar{\tau}_\varepsilon^i|^2}}\right)-\varepsilon\bar{g}_i(\nu^\varepsilon\cdot\nabla\omega^\varepsilon) \\
    &\geq \beta^\varepsilon-(-1)^{i+1}\omega^\varepsilon-|\omega^\varepsilon|\left|1-\frac{1}{\sqrt{1+\varepsilon^2|\bar{\tau}_\varepsilon^i|^2}}\right|-c\varepsilon|\nabla\omega^\varepsilon| \geq -c\varepsilon
  \end{align*}
  on $\Gamma_\varepsilon^i(t)$ for all $t\in(0,T]$ and $i=0,1$.
  Hence we can apply Theorem \ref{T:Uni_Para} to $e^{d\omega^\varepsilon}\chi^\varepsilon$ and then obtain \eqref{E:Uni_Para} for $\chi^\varepsilon$ by noting that $d\omega^\varepsilon$ is uniformly bounded on $\overline{Q_{\varepsilon,T}}$.
\end{remark}

\section{Proofs of the main results} \label{S:Pf_Main}
In this section we prove Theorems \ref{T:Error_Spe} and \ref{T:Error_Gen}.
We give the proof of Theorem \ref{T:Error_Gen} below, and Theorem \ref{T:Error_Spe} can be obtained by setting $\nabla_\Gamma g_0=\nabla_\Gamma g_1=0$ on $\overline{S_T}$ in the following proof.
As in the previous sections, we denote by $\bar{\eta}$ for the constant extension of a function $\eta$ on $\overline{S_T}$ in the normal direction of $\Gamma(t)$.

Let $\rho^\varepsilon$ and $\eta$ be classical solutions to \eqref{E:Heat_MTD} and \eqref{E:Limit}.
Also, let
\begin{align} \label{E:PM_zeta}
  \zeta_i = \frac{1}{g}\{\nabla_\Gamma g_i\cdot\nabla_\Gamma\eta-k_d^{-1}(\partial^\circ g_i)\eta+k_d^{-2}g_iV_\Gamma^2\eta\}
\end{align}
on $\overline{S_T}$ and $i=0,1$.
For $(x,t)\in\overline{Q_{\varepsilon,T}}$ we define
\begin{align} \label{E:PM_ee12}
  \begin{aligned}
    \eta_2^\varepsilon(x,t) &= \varepsilon d(x,t)\Bigl(\bar{g}_1\bar{\zeta}_0-\bar{g}_0\bar{\zeta}_1\Bigr)(x,t)+\frac{1}{2}d(x,t)^2\Bigl(\bar{\zeta}_0-\bar{\zeta}_1\Bigr)(x,t), \\
    \rho_\eta^\varepsilon(x,t) &= \bar{\eta}(x,t)-k_d^{-1}d(x,t)\Bigl(\overline{V_\Gamma\eta}\Bigr)(x,t)+\eta_2^\varepsilon(x,t).
  \end{aligned}
\end{align}
We also set
\begin{alignat*}{3}
  f_\eta^\varepsilon &= \partial_t\rho_\eta^\varepsilon-k_d\Delta\rho_\eta^\varepsilon-\bar{f} &\quad &\text{on} &\quad &Q_{\varepsilon,T}, \\
  \psi_\eta^\varepsilon &= \partial_{\nu_\varepsilon}\rho_\eta^\varepsilon+k_d^{-1}V_\varepsilon\rho_\eta^\varepsilon &\quad &\text{on} &\quad &\partial_\ell Q_{\varepsilon,T}
\end{alignat*}
so that $\rho^\varepsilon-\rho_\eta^\varepsilon$ satisfies
\begin{align*}
  \left\{
  \begin{alignedat}{3}
    \partial_t(\rho^\varepsilon-\rho_\eta^\varepsilon)-k_d\Delta(\rho^\varepsilon-\rho_\eta^\varepsilon) &= (f^\varepsilon-\bar{f})-f_\eta^\varepsilon &\quad &\text{in} &\quad &Q_{\varepsilon,T}, \\
    \partial_{\nu_\varepsilon}(\rho^\varepsilon-\rho_\eta^\varepsilon)+k_d^{-1}V_\varepsilon(\rho^\varepsilon-\rho_\eta^\varepsilon) &= -\psi_\eta^\varepsilon &\quad &\text{on} &\quad &\partial_\ell Q_{\varepsilon,T}, \\
    (\rho^\varepsilon-\rho_\eta^\varepsilon)|_{t=0} &= \rho_0^\varepsilon-\rho_\eta^\varepsilon(\cdot,0) &\quad &\text{in} &\quad &\Omega_\varepsilon(0).
  \end{alignedat}
  \right.
\end{align*}
Hence it follows from Theorem \ref{T:Uni_Heat} that
\begin{multline*}
  \|\rho^\varepsilon-\rho_\eta^\varepsilon\|_{C(\overline{Q_{\varepsilon,T}})} \leq c_T\Bigl(\|\rho_0^\varepsilon-\rho_\eta^\varepsilon(\cdot,0)\|_{C(\overline{\Omega_\varepsilon(0)})}+\|f^\varepsilon-\bar{f}\|_{C(Q_{\varepsilon,T})}\Bigr) \\
  +c_T\Bigl(\|f_\eta^\varepsilon\|_{C(Q_{\varepsilon,T})}+\frac{1}{\varepsilon}\|\psi_\eta^\varepsilon\|_{C(\partial_\ell Q_{\varepsilon,T})}\Bigr)
\end{multline*}
with a constant $c_T>0$ depending on $T$ but independent of $\varepsilon$, $\rho^\varepsilon$, $\eta$, and the given data.
By this inequality, \eqref{E:PM_ee12}, and $\eta|_{t=0}=\eta_0$ on $\Gamma(0)$, we obtain
\begin{align} \label{E:PM_Diff}
  \begin{aligned}
    \|\rho^\varepsilon-\bar{\eta}\|_{C(\overline{Q_{\varepsilon,T}})} &\leq c_T\Bigl(\|\rho_0^\varepsilon-\bar{\eta}_0\|_{C(\overline{\Omega_\varepsilon(0)})}+\|f^\varepsilon-\bar{f}\|_{C(Q_{\varepsilon,T})}\Bigr) \\
    &\qquad +c_T\left(\left\|d\Bigl(\overline{V_\Gamma\eta}\Bigr)\right\|_{C(\overline{Q_{\varepsilon,T}})}+\|\eta_2^\varepsilon\|_{C(\overline{Q_{\varepsilon,T}})}\right) \\
    &\qquad +c_T\Bigl(\|f_\eta^\varepsilon\|_{C(Q_{\varepsilon,T})}+\frac{1}{\varepsilon}\|\psi_\eta^\varepsilon\|_{C(\partial_\ell Q_{\varepsilon,T})}\Bigr).
  \end{aligned}
\end{align}
Let us further estimate the right-hand side.
In what follows, we use the notation \eqref{E:C21_Norm} and denote by $c$ a general positive constant independent of $\varepsilon$, $\rho^\varepsilon$, $\eta$, and the given data.
We also frequently use the facts that $g_0$, $g_1$, and $V_\Gamma$ are smooth and thus bounded on $\overline{S_T}$ along with their derivatives and that $g$ satisfies \eqref{E:G_inf} without mention.

First we observe by \eqref{E:PM_ee12} and $|d|\leq c\varepsilon$ in $\overline{Q_{\varepsilon,T}}$ that
\begin{align} \label{E:PM_e12B}
  \left\|d\Bigl(\overline{V_\Gamma\eta}\Bigr)\right\|_{C(\overline{Q_{\varepsilon,T}})}+\|\eta_2^\varepsilon\|_{C(\overline{Q_{\varepsilon,T}})} &\leq c\varepsilon\Bigl(\|\eta\|_{C(\overline{S_T})}+\|\zeta_0\|_{C(\overline{S_T})}+\|\zeta_1\|_{C(\overline{S_T})}\Bigr).
\end{align}
Note that $\overline{S_T}$ includes $\Gamma(0)\times\{0\}$.
Next we consider $f_\eta^\varepsilon$.
We see that
\begin{align} \label{E:PM_dtAp}
  \left|\partial_t\rho_\eta^\varepsilon-\overline{\partial^\circ\eta}-k_d^{-1}\Bigl(\overline{V_\Gamma^2\eta}\Bigr)\right| &\leq c\varepsilon\sum_{\xi=\eta,\zeta_0,\zeta_1}\left(|\bar{\xi}|+\Bigl|\overline{\partial^\circ\xi}\Bigr|+\Bigl|\overline{\nabla_\Gamma\xi}\Bigr|\right)
\end{align}
in $Q_{\varepsilon,T}$ by \eqref{E:SDdt_TN}, \eqref{E:CEdt_Bound}, \eqref{E:CEdt_Diff}, and $|d|\leq c\varepsilon$ in $Q_{\varepsilon,T}$.
Also,
\begin{align} \label{E:PM_LapAp}
  \left|\Delta\rho_\eta^\varepsilon-\overline{\Delta_\Gamma\eta}-k_d^{-1}\Bigl(\overline{V_\Gamma H\eta}\Bigr)-\Bigl(\bar{\zeta}_1-\bar{\zeta}_0\Bigr)\right| &\leq c\varepsilon\sum_{\xi=\eta,\zeta_0,\zeta_1}\left(|\bar{\xi}|+\Bigl|\overline{\nabla_\Gamma\xi}\Bigr|+\Bigl|\overline{\nabla_\Gamma^2\xi}\Bigr|\right)
\end{align}
in $Q_{\varepsilon,T}$ by \eqref{E:CE_Lap}--\eqref{E:CEL_2d} and $|d|\leq c\varepsilon$ in $Q_{\varepsilon,T}$.
Moreover, since
\begin{align*}
  \zeta_1-\zeta_0 = \frac{1}{g}\{\nabla_\Gamma g\cdot\nabla_\Gamma\eta-k_d^{-1}(\partial^\circ g)\eta+k_d^{-2}gV_\Gamma^2\eta\} \quad\text{on}\quad S_T
\end{align*}
by \eqref{E:PM_zeta} and $g_1-g_0=g$ on $S_T$, and since $\eta$ satisfies \eqref{E:Limit}, we have
\begin{align} \label{E:PM_Et_Eq}
  \begin{aligned}
    &\partial^\circ\eta+k_d^{-1}V_\Gamma^2\eta-k_d\{\Delta_\Gamma\eta+k_d^{-1}V_\Gamma H\eta+(\zeta_1-\zeta_0)\} \\
    &\qquad = \frac{1}{g}\{g\partial^\circ\eta+(\partial^\circ g)\eta-gV_\Gamma H\eta-k_d\,g\Delta_\Gamma\eta-k_d\nabla_\Gamma g\cdot\nabla_\Gamma\eta\} \\
    &\qquad = \frac{1}{g}\{\partial^\circ(g\eta)-gV_\Gamma H\eta-k_d\,\mathrm{div}_\Gamma(g\nabla_\Gamma\eta)\} = \frac{gf}{g} = f \quad\text{on}\quad S_T.
  \end{aligned}
\end{align}
Thus, by \eqref{E:PM_dtAp}--\eqref{E:PM_Et_Eq} and $f_\eta^\varepsilon=\partial_t\rho_\eta^\varepsilon-k_d\Delta\rho_\eta^\varepsilon-\bar{f}$ in $Q_{\varepsilon,T}$, we find that
\begin{align*}
  |f_\eta^\varepsilon| &\leq c\varepsilon\sum_{\xi=\eta,\zeta_0,\zeta_1}\left(|\bar{\xi}|+\Bigl|\overline{\partial^\circ\xi}\Bigr|+\Bigl|\overline{\nabla_\Gamma\xi}\Bigr|+\Bigl|\overline{\nabla_\Gamma^2\xi}\Bigr|\right)
\end{align*}
in $Q_{\varepsilon,T}$, which implies that
\begin{align} \label{E:PM_feeB}
  \|f_\eta^\varepsilon\|_{C(Q_{\varepsilon,T})} \leq c\varepsilon\Bigl(\|\eta\|_{C^{2,1}(S_T)}+\|\zeta_0\|_{C^{2,1}(S_T)}+\|\zeta_1\|_{C^{2,1}(S_T)}\Bigr).
\end{align}
Note that here $\Gamma(0)\times\{0\}$ is not included.
Now let us estimate $\psi_\eta^\varepsilon$ on $\partial_\ell Q_{\varepsilon,T}$.
Let $i=0,1$ and $t\in(0,T]$.
Since $\nu_\varepsilon$ and $V_\varepsilon$ are of the form \eqref{E:MTD_UON} and \eqref{E:MTD_ONV}, we have
\begin{align} \label{E:PM_Psee}
  \begin{aligned}
    \psi_\eta^\varepsilon &= \nu_\varepsilon\cdot\nabla\rho_\eta^\varepsilon+k_d^{-1}V_\varepsilon\rho_\eta^\varepsilon \\
    &= \frac{(-1)^{i+1}}{\sqrt{1+\varepsilon^2|\bar{\tau}_\varepsilon^i|^2}}\left\{(\bar{\nu}-\varepsilon\bar{\tau}_\varepsilon^i)\cdot\nabla\rho_\eta^\varepsilon+k_d^{-1}\Bigl(\overline{V_\Gamma}+\varepsilon\,\overline{\partial^\circ g_i}+\varepsilon^2\bar{g}_i\bar{\tau}_\varepsilon^i\cdot\overline{\nabla_\Gamma V_\Gamma}\Bigr)\rho_\eta^\varepsilon\right\}
  \end{aligned}
\end{align}
on $\Gamma_\varepsilon^i(t)$, where $\tau_\varepsilon^i$ is given by \eqref{E:TD_AuxVec}.
To estimate the second line, we first note that
\begin{align*}
  \bar{\nu}\cdot\nabla d = |\bar{\nu}|^2 = 1, \quad \bar{\nu}\cdot\nabla\bar{\xi} = 0 \quad\text{in}\quad \overline{N_T}
\end{align*}
for a function $\xi$ on $\overline{S_T}$.
We apply these equalities to $\rho_\eta^\varepsilon$ of the form \eqref{E:PM_ee12} to get
\begin{align*}
  \bar{\nu}\cdot\nabla\rho_\eta^\varepsilon = -k_d^{-1}\Bigl(\overline{V_\Gamma\eta}\Bigr)+\varepsilon\Bigl(\bar{g}_1\bar{\zeta}_0-\bar{g}_0\bar{\zeta}_1\Bigr)+d\Bigl(\bar{\zeta}_1-\bar{\zeta}_0\Bigr) \quad\text{in}\quad \overline{Q_{\varepsilon,T}}.
\end{align*}
Then we see by $d=\varepsilon\bar{g}_i$ on $\Gamma_\varepsilon^i(t)$, $g_1-g_0=g$ on $S_T$, and \eqref{E:PM_zeta} that
\begin{align} \label{E:PM_NuGe}
  \begin{aligned}
    \bar{\nu}\cdot\nabla\rho_\eta^\varepsilon &= -k_d^{-1}\Bigl(\overline{V_\Gamma\eta}\Bigr)+\varepsilon\Bigl(\overline{g\zeta_i}\Bigr) \\
    &= -k_d^{-1}\Bigl(\overline{V_\Gamma\eta}\Bigr)+\varepsilon\left\{\overline{\nabla_\Gamma g_i}\cdot\overline{\nabla_\Gamma\eta}-k_d^{-1}\Bigl(\overline{\partial^\circ g_i}\Bigr)\bar{\eta}+k_d^{-2}\Bigl(\overline{g_iV_\Gamma^2\eta}\Bigr)\right\}
  \end{aligned}
\end{align}
on $\Gamma_\varepsilon^i(t)$.
Moreover, noting that $\bar{\tau}_\varepsilon^i\cdot\nabla d=\bar{\tau}_\varepsilon^i\cdot\bar{\nu}=0$ in $\overline{N_T}$ by \eqref{E:TD_AVTan}, we deduce from \eqref{E:CEGr_Bound} and \eqref{E:CEGr_Diff} with $d=\varepsilon\bar{g}_i$ on $\Gamma_\varepsilon^i(t)$ and \eqref{E:TD_AVBo} that
\begin{align} \label{E:PM_TauGe}
  \Bigl|\bar{\tau}_\varepsilon^i\cdot\nabla\rho_\eta^\varepsilon-\overline{\nabla_\Gamma g_i}\cdot\overline{\nabla_\Gamma\eta}\Bigr| \leq c\varepsilon\sum_{\xi=\eta,\zeta_0,\zeta_1}\left(|\bar{\xi}|+\Bigl|\overline{\nabla_\Gamma\xi}\Bigr|\right)
\end{align}
on $\Gamma_\varepsilon^i(t)$.
We also observe by $d=\varepsilon\bar{g}_i$ on $\Gamma_\varepsilon^i(t)$ that
\begin{align} \label{E:PM_BoZero}
  \left|\Bigl(\overline{V_\Gamma}+\varepsilon\,\overline{\partial^\circ g_i}\Bigr)\rho_\eta^\varepsilon-\overline{V_\Gamma\eta}-\varepsilon\left\{\Bigl(\overline{\partial^\circ g_i}\Bigr)\bar{\eta}-k_d^{-1}\Bigl(\overline{g_iV_\Gamma^2\eta}\Bigr)\right\}\right| \leq c\varepsilon^2(|\bar{\eta}|+|\bar{\zeta}_0|+|\bar{\zeta}_1|)
\end{align}
on $\Gamma_\varepsilon^i(t)$.
Noting that $|\rho_\eta^\varepsilon|\leq c(|\bar{\eta}|+|\bar{\zeta}_0|+|\bar{\zeta}_1|)$ in $\overline{Q_{\varepsilon,T}}$, we apply \eqref{E:TD_AVBo} and \eqref{E:PM_NuGe}--\eqref{E:PM_BoZero} to \eqref{E:PM_Psee} to find that
\begin{align*}
  |\psi_\eta^\varepsilon| &\leq \left|(\bar{\nu}-\varepsilon\bar{\tau}_\varepsilon^i)\cdot\nabla\rho_\eta^\varepsilon+k_d^{-1}\Bigl(\overline{V_\Gamma}+\varepsilon\,\overline{\partial^\circ g_i}+\varepsilon^2\bar{g}_i\bar{\tau}_\varepsilon^i\cdot\overline{\nabla_\Gamma V_\Gamma}\Bigr)\rho_\eta^\varepsilon\right| \\
  &\leq c\varepsilon^2\sum_{\xi=\eta,\zeta_0,\zeta_1}\left(|\bar{\xi}|+\Bigl|\overline{\nabla_\Gamma\xi}\Bigr|\right)
\end{align*}
on $\Gamma_\varepsilon^i(t)$ with $i=0,1$ and $t\in(0,T]$.
Therefore,
\begin{align} \label{E:PM_PseB}
  \|\psi_\eta^\varepsilon\|_{C(\partial_\ell Q_{\varepsilon,T})} \leq c\varepsilon^2\Bigl(\|\eta\|_{C^{2,1}(S_T)}+\|\zeta_0\|_{C^{2,1}(S_T)}+\|\zeta_1\|_{C^{2,1}(S_T)}\Bigr).
\end{align}
Finally, we obtain \eqref{E:Error_Gen} by applying \eqref{E:PM_e12B}, \eqref{E:PM_feeB}, and \eqref{E:PM_PseB} to \eqref{E:PM_Diff} and noting that
\begin{align*}
  \|\eta\|_{C(\overline{S_T})} \leq \|\eta_0\|_{C(\Gamma(0))}+\|\eta\|_{C^{2,1}(S_T)}
\end{align*}
by $\eta|_{t=0}=\eta_0$ on $\Gamma(0)$ and that
\begin{align*}
  \|\zeta_i\|_{C(\overline{S_T})}+\|\zeta_i\|_{C^{2,1}(S_T)} &\leq \|\zeta_i(\cdot,0)\|_{C(\Gamma(0))}+\|\zeta_i\|_{C(S_T)}+\|\zeta_i\|_{C^{2,1}(S_T)} \\
  &\leq c\Bigl(\|\eta_0\|_{C(\Gamma(0))}+\|\eta\|_{C^{2,1}(S_T)}\Bigr) \\
  &\qquad +c\Bigl(\|h_i(\cdot,0)\|_{C(\Gamma(0))}+\|h_i\|_{C^{2,1}(S_T)}\Bigr)
\end{align*}
for $i=0,1$ with $h_i=\nabla_\Gamma g_i\cdot\nabla_\Gamma\eta$ on $\overline{S_T}$ by \eqref{E:PM_zeta}.
The proof of Theorem \ref{T:Error_Gen} is complete.

\begin{remark} \label{R:PM_UapE}
  By the idea of the above proof, we can also show that the factor $\varepsilon^{-1}$ in the uniform a priori estimate \eqref{E:Uni_Heat} cannot be removed.
  Indeed, assume that the inequality
  \begin{align} \label{E:Wrong_Uni}
    \|\rho^\varepsilon\|_{C(\overline{Q_{\varepsilon,T}})} \leq c\left(\|\rho_0^\varepsilon\|_{C(\overline{\Omega_\varepsilon(0)})}+\|f^\varepsilon\|_{C(Q_{\varepsilon,T})}+\|\psi^\varepsilon\|_{C(\partial_\ell Q_{\varepsilon,T})}\right)
  \end{align}
  holds for a classical solution $\rho^\varepsilon$ to \eqref{E:Heat_Gen} with a constant $c>0$ independent of $\varepsilon$.
  For an arbitrary smooth function $\zeta$ on $\overline{S_T}$, we define
  \begin{align*}
    \rho_\zeta^\varepsilon(x,t) = \bar{\zeta}(x,t)-k_d^{-1}d(x,t)\Bigl(\overline{V_\Gamma\zeta}\Bigr)(x,t)+\frac{1}{2}d(x,t)^2\bar{\zeta}_2(x,t), \quad (x,t)\in\overline{Q_{\varepsilon,T}},
  \end{align*}
  where $\zeta_2$ is a function on $\overline{S_T}$ given later (and $\rho_\zeta^\varepsilon$ is in fact independent of $\varepsilon$), and set
  \begin{align*}
    f_\zeta^\varepsilon &= \partial_t\rho_\zeta^\varepsilon-k_d^{-1}\Delta\rho_\zeta^\varepsilon \quad\text{on}\quad Q_{\varepsilon,T}, \quad \psi_\zeta^\varepsilon = \partial_{\nu_\varepsilon}\rho_\zeta^\varepsilon+k_d^{-1}V_\varepsilon\rho_\zeta^\varepsilon \quad\text{on}\quad \partial_\ell Q_{\varepsilon,T}.
  \end{align*}
  Then since $\rho_\zeta^\varepsilon$ satisfies \eqref{E:Heat_Gen} with the above $f_\zeta^\varepsilon$ and $\psi_\zeta^\varepsilon$, we can use \eqref{E:Wrong_Uni} to get
  \begin{align*}
    \|\rho_\zeta^\varepsilon\|_{C(\overline{Q_{\varepsilon,T}})} \leq c\left(\|\rho_\zeta^\varepsilon(\cdot,0)\|_{C(\overline{\Omega_\varepsilon(0)})}+\|f_\zeta^\varepsilon\|_{C(Q_{\varepsilon,T})}+\|\psi_\zeta^\varepsilon\|_{C(\partial_\ell Q_{\varepsilon,T})}\right)
  \end{align*}
  and thus, by the definition of $\rho_\zeta^\varepsilon$, $|d|\leq c\varepsilon$ in $\overline{Q_{\varepsilon,T}}$, and
  \begin{align*}
    \|\bar{\zeta}\|_{C(\overline{Q_{\varepsilon,T}})} = \|\zeta\|_{C(\overline{S_T})}, \quad \|\bar{\zeta}(\cdot,0)\|_{C(\overline{\Omega_\varepsilon(0)})} = \|\zeta(\cdot,0)\|_{C(\Gamma(0))} \leq \|\zeta\|_{C(\overline{S_T})},
  \end{align*}
  we find that
  \begin{align} \label{E:Wr_Zeta}
    \begin{aligned}
      \|\zeta\|_{C(\overline{S_T})} &\leq c\Bigl(\|\zeta(\cdot,0)\|_{C(\Gamma(0))}+\|f_\zeta^\varepsilon\|_{C(Q_{\varepsilon,T})}+\|\psi_\zeta^\varepsilon\|_{C(\partial_\ell Q_{\varepsilon,T})}\Bigr) \\
      &\qquad +c\varepsilon\Bigl(\|\zeta\|_{C(\overline{S_T})}+\|\zeta_2\|_{C(\overline{S_T})}\Bigr).
    \end{aligned}
  \end{align}
  Moreover, since
  \begin{align*}
    \bar{\nu}(x,t)\cdot\nabla\rho_\zeta^\varepsilon(x,t) = -k_d^{-1}\Bigl(\overline{V_\Gamma\zeta}\Bigr)(x,t)+d(x,t)\bar{\zeta}_2(x,t), \quad (x,t)\in\overline{Q_{\varepsilon,T}}
  \end{align*}
  by $\nabla d=\bar{\nu}$, $|\bar{\nu}|^2=1$, and $\bar{\nu}\cdot\nabla\bar{\xi}=0$ in $\overline{N_T}$ for a function $\xi$ on $\overline{S_T}$, we observe by \eqref{E:PM_Psee} with $\eta$ replaced by $\zeta$ and by \eqref{E:TD_AVBo} and $d=\varepsilon\bar{g}_i$ on $\Gamma_\varepsilon^i(t)$ that
  \begin{align} \label{E:Wr_Pz_Bd}
    |\psi_\zeta^\varepsilon| \leq c\varepsilon\sum_{\xi=\zeta,\zeta_2}\left(|\bar{\xi}|+\Bigl|\overline{\nabla_\Gamma\xi}\Bigr|\right) \quad\text{on}\quad \partial_\ell Q_{\varepsilon,T}.
  \end{align}
  Also, as in \eqref{E:PM_dtAp} and \eqref{E:PM_LapAp}, we have
  \begin{align} \label{E:Wr_Fz_Bd}
    |f_\zeta^\varepsilon-\bar{f}_\zeta| \leq c\varepsilon\sum_{\xi=\zeta,\zeta_2}\left(|\bar{\xi}|+\Bigl|\overline{\partial^\circ\xi}\Bigr|+\Bigl|\overline{\nabla_\Gamma\xi}\Bigr|+\Bigl|\overline{\nabla_\Gamma^2\xi}\Bigr|\right) \quad\text{in}\quad Q_{\varepsilon,T}
  \end{align}
  by \eqref{E:CE_Lap}--\eqref{E:CEL_2d}, \eqref{E:SDdt_Surf}, \eqref{E:CEdt_Bound}, \eqref{E:CEdt_Diff}, and $|d|\leq c\varepsilon$ in $\overline{Q_{\varepsilon,T}}$, where
  \begin{align*}
    f_\zeta = \partial^\circ\zeta+k_d^{-1}V_\Gamma^2\zeta-k_d\Delta_\Gamma\zeta-V_\Gamma H\zeta-k_d\zeta_2 \quad\text{on}\quad S_T.
  \end{align*}
  Thus, setting
  \begin{align*}
    \zeta_2 = k_d^{-1}(\partial^\circ\zeta+k_d^{-1}V_\Gamma^2\zeta-k_d\Delta_\Gamma\zeta-V_\Gamma H\zeta) \quad\text{on}\quad \overline{S_T},
  \end{align*}
  we get $f_\zeta=0$ on $S_T$, and it follows from \eqref{E:Wr_Pz_Bd}, \eqref{E:Wr_Fz_Bd}, and $f_\zeta=0$ on $S_T$ that
  \begin{align*}
    \|f_\zeta^\varepsilon\|_{C(Q_{\varepsilon,T})}+\|\psi_\zeta^\varepsilon\|_{C(\partial_\ell Q_{\varepsilon,T})} \leq c\varepsilon\Bigl(\|\zeta\|_{C^{2,1}(S_T)}+\|\zeta_2\|_{C^{2,1}(S_T)}\Bigr).
  \end{align*}
  Now we apply this inequality to \eqref{E:Wr_Zeta} to find that
  \begin{align*}
    \|\zeta\|_{C(\overline{S_T})} \leq c\|\zeta(\cdot,0)\|_{C(\Gamma(0))}+c\varepsilon\sum_{\xi=\zeta,\zeta_2}\Bigl(\|\xi\|_{C(\overline{S_T})}+\|\xi\|_{C^{2,1}(S_T)}\Bigr)
  \end{align*}
  with a constant $c>0$ independent of $\varepsilon$.
  Hence we send $\varepsilon\to0$ to get
  \begin{align*}
    \|\zeta\|_{C(\overline{S_T})} \leq c\|\zeta(\cdot,0)\|_{C(\Gamma(0))}
  \end{align*}
  for any smooth function $\zeta$ on $\overline{S_T}$, but this is a contradiction since we can take $\zeta$ such that $\zeta(\cdot,0)=0$ on $\Gamma(0)$ but $\zeta(\cdot,t)$ does not vanish on $\Gamma(t)$ for some $t\in(0,T]$.
  Therefore, the inequality \eqref{E:Wrong_Uni} does not hold, i.e. we cannot remove $\varepsilon^{-1}$ in \eqref{E:Uni_Heat}.
  In the same way, we can show that any smooth function on $\overline{S_T}$ approximates a classical solution to \eqref{E:Heat_MTD} of order $\varepsilon$ as in \eqref{E:Error_Spe} and \eqref{E:Error_Gen} if \eqref{E:Wrong_Uni} holds, which is again absurd.
  Hence we can also consider that the factor $\varepsilon^{-1}$ in \eqref{E:Uni_Heat} is crucial in order that $\eta$ should be a solution to the limit equation \eqref{E:Limit} in the error estimates \eqref{E:Error_Spe} and \eqref{E:Error_Gen}.
\end{remark}

\section{Formal derivation of a limit equation and an approximate solution} \label{S:Formal}
We explain how to derive formally the limit equation \eqref{E:Limit} and the approximate solution $\rho_\eta^\varepsilon$ of the form \eqref{E:PM_ee12} from the thin domain problem \eqref{E:Heat_MTD}.
Throughout this section, we write $O(\varepsilon^k)$ with $k\geq1$ for any quantity of order at least $\varepsilon^k$.

First we assume that a function $\rho$ on $\overline{Q_{\varepsilon,T}}$ is of the form
\begin{align*}
  \rho(x,t) = \eta(\pi(x,t),t,\varepsilon^{-1}d(x,t)), \quad (x,t) \in \overline{Q_{\varepsilon,T}},
\end{align*}
where $\eta(y,t,r)$ is a function of $(y,t)\in\overline{S_T}$ and $r\in[g_0(y,t),g_1(y,t)]$ independent of $\varepsilon$.
By a slight abuse of the notation, we write
\begin{align*}
  \bar{\eta}(x,t,r) = \eta(\pi(x,t),t,r), \quad (x,t) \in \overline{N_T}, \, r\in[\bar{g}_0(x,t),\bar{g}_1(x,t)]
\end{align*}
so that $\rho(x,t)=\bar{\eta}(x,t,\varepsilon^{-1}d)$ with $d=d(x,t)$.
In what follows, we sometimes suppress the arguments $x$ and $t$ of functions which do not have the third argument $r$.
Also, we consider that the tangential derivatives apply to the argument $y$ of $\eta(y,t,r)$.

Let $(x,t)\in\overline{Q_{\varepsilon,T}}$ and $R$ be given by \eqref{E:Inv_IdW}.
We see by \eqref{E:SDdt_TN} and \eqref{E:CEdt_TN} that
\begin{align*}
  \partial_t\rho &= \partial_t\bar{\eta}(x,t,\varepsilon^{-1}d)+\varepsilon^{-1}(\partial_td)\partial_r\bar{\eta}(x,t,\varepsilon^{-1}d) \\
  &= \overline{\partial^\circ\eta}(x,t,\varepsilon^{-1}d)+d\Bigl(R\,\overline{\nabla_\Gamma V_\Gamma}\Bigr)\cdot\overline{\nabla_\Gamma\eta}(x,t,\varepsilon^{-1}d)-\varepsilon^{-1}\overline{V_\Gamma}\,\partial_r\bar{\eta}(x,t,\varepsilon^{-1}d).
\end{align*}
Then, setting $y=\pi(x,t)$ and $r=\varepsilon^{-1}d(x,t)$ and noting that
\begin{align} \label{E:FD_R}
  R(x,t) = \{I_n-\varepsilon rW(y,t)\}^{-1} = I_n+\varepsilon rW(y,t)+O(\varepsilon^2)
\end{align}
by the Neumann series expression, we have
\begin{align} \label{E:FD_dt}
  \partial_t\rho(x,t) = -\varepsilon^{-1}V_\Gamma(y,t)\partial_r\eta(y,t,r)+\partial^\circ\eta(y,t,r)+O(\varepsilon).
\end{align}
Next we observe by \eqref{E:CEGr_TN} and $\nabla d=\bar{\nu}$ in $\overline{N_T}$ that
\begin{align} \label{E:FD_par_i}
  \partial_i\rho = \sum_{j=1}^nR_{ij}\overline{\underline{D}_j\eta}(x,t,\varepsilon^{-1}d)+\varepsilon^{-1}\bar{\nu}_i\partial_r\bar{\eta}(x,t,\varepsilon^{-1}d)
\end{align}
for $i=1,\dots,n$.
Then we use \eqref{E:FD_R} with $y=\pi(x,t)$ and $r=\varepsilon^{-1}d(x,t)$ to find that
\begin{align} \label{E:FD_Gr}
  \nabla\rho(x,t) &= \varepsilon^{-1}\partial_r\eta(y,t,r)\nu(y,t)+\nabla_\Gamma\eta(y,t,r)+\varepsilon rW(y,t)\nabla_\Gamma\eta(y,t,r)+O(\varepsilon^2).
\end{align}
Also, we apply $\partial_i$ to \eqref{E:FD_par_i} and use \eqref{E:CEGr_TN} and $\nabla d=\bar{\nu}$ in $\overline{N_T}$ to get
\begin{align*}
  \partial_i\partial_i\rho &= \sum_{j=1}^n(\partial_iR_{ij})\overline{\underline{D}_j\eta}(x,t,\varepsilon^{-1}d)+\sum_{j,k=1}^nR_{ij}R_{ik}\overline{\underline{D}_k\underline{D}_j\eta}(x,t,\varepsilon^{-1}d) \\
  &\qquad +\sum_{j=1}^nR_{ij}\left\{\varepsilon^{-1}\bar{\nu}_i\partial_r\Bigl(\overline{\underline{D}_j\eta}\Bigr)(x,t,\varepsilon^{-1}d)\right\} \\
  &\qquad +\varepsilon^{-1}\sum_{j=1}^nR_{ij}\Bigl(\overline{\underline{D}_j\nu_i}\Bigr)\partial_r\bar{\eta}(x,t,\varepsilon^{-1}d)\\
  &\qquad +\varepsilon^{-1}\bar{\nu}_i\sum_{j=1}^nR_{ij}\overline{\underline{D}_j(\partial_r\eta)}(x,t,\varepsilon^{-1}d)+\varepsilon^{-2}\bar{\nu}_i^2\partial_r^2\bar{\eta}(x,t,\varepsilon^{-1}d).
\end{align*}
We take the sum of both sides for $i=1,\dots,n$.
Then, since
\begin{align*}
  R^T\bar{\nu} = R\bar{\nu} = \Bigl\{I_n-d\overline{W}\Bigr\}^{-1}\bar{\nu} = \bar{\nu}, \quad\text{i.e.}\quad \sum_{i=1}^nR_{ij}\bar{\nu}_i = \bar{\nu}_j \quad\text{in}\quad \overline{N_T}
\end{align*}
for $j=1,\dots,n$ by the symmetry of $W$ and $W\nu=0$ on $\overline{S_T}$, since
\begin{align*}
  R_{ij}(x,t) = \delta_{ij}+\varepsilon rW_{ij}(y,t)+O(\varepsilon^2), \quad \partial_iR_{ij}(x,t) = (\nu_iW_{ij})(y,t)+O(\varepsilon)
\end{align*}
for $(x,t)\in\overline{Q_{\varepsilon,T}}$ and $i,j=1,\dots,n$ by \eqref{E:IdW_Der} with $|d|\leq c\varepsilon$ in $\overline{Q_{\varepsilon,T}}$ and by \eqref{E:FD_R}, where $\delta_{ij}$ is the Kronecker delta and $y=\pi(x,t)$ and $r=\varepsilon^{-1}d(x,t)$, and since
\begin{align*}
  |\nu|^2 = 1, \quad \underline{D}_j\nu_i = -W_{ji}, \quad \sum_{i=1}^n\underline{D}_i\nu_i = \mathrm{div}_\Gamma\nu = -H \quad\text{on}\quad \overline{S_T},
\end{align*}
we have
\begin{align*}
  \Delta\rho(x,t) &= \sum_{i,j=1}^n(\nu_iW_{ij})(y,t)\underline{D}_j\eta(y,t,r)+\Delta_\Gamma\eta(y,t,r) \\
  &\qquad +\varepsilon^{-1}\sum_{j=1}^n\nu_j(y,t)\partial_r(\underline{D}_j\eta)(y,t,r)\\
  &\qquad -\varepsilon^{-1}H(y,t)\partial_r\eta(y,t,r)-r\sum_{i,j=1}^n(W_{ij}W_{ji})(y,t)\partial_r\eta(y,t,r) \\
  &\qquad +\varepsilon^{-1}\sum_{j=1}^n\nu_j(y,t)\underline{D}_j(\partial_r\eta)(y,t,r)+\varepsilon^{-2}\partial_r^2\eta(y,t,r)+O(\varepsilon)
\end{align*}
with $y=\pi(y,t)$ and $r=\varepsilon^{-1}d(x,t)$.
Moreover, we see that
\begin{align*}
  \sum_{i,j=1}^n(\nu_iW_{ij})(y,t)\underline{D}_j\eta(y,t,r) &= (W^T\nu)(y,t)\cdot\nabla_\Gamma\eta(y,t,r) = 0, \\
  \sum_{i,j=1}^n(W_{ij}W_{ji})(y,t) &= \sum_{i,j=1}^nW_{ij}(y,t)^2 = |W(y,t)|^2
\end{align*}
by $W^T=W$ and $W\nu=0$ on $\overline{S_T}$, and that
\begin{align*}
  \sum_{j=1}^n\nu_j(y,t)\partial_r(\underline{D}_j\eta)(y,t,r) &= \partial_r\Bigl(\nu(y,t)\cdot\nabla_\Gamma\eta(y,t,r)\Bigr) = 0, \\
  \sum_{j=1}^n\nu_j(y,t)\underline{D}_j(\partial_r\eta)(y,t,r) &= \nu(y,t)\cdot[\nabla_\Gamma(\partial_r\eta)](y,t,r) = 0,
\end{align*}
since $\nu$ is independent of the variable $r$ and $\nu\cdot\nabla_\Gamma=0$.
Therefore,
\begin{align} \label{E:FD_Lap}
  \begin{aligned}
    \Delta\rho(x,t) &= \varepsilon^{-2}\partial_r^2\eta(y,t,r)-\varepsilon^{-1}H(y,t)\partial_r\eta(y,t,r) \\
    &\qquad -r|W(y,t)|^2\partial_r\eta(y,t,r)+\Delta_\Gamma\eta(y,t,r)+O(\varepsilon).
  \end{aligned}
\end{align}
Recall that we take $(x,t)\in\overline{Q_{\varepsilon,T}}$ and set $y=\pi(x,t)$ and $r=\varepsilon^{-1}d(x,t)$.

Now let $\rho^\varepsilon$ be a solution to the thin domain problem \eqref{E:Heat_MTD}.
To derive the limit equation \eqref{E:Limit} and the approximate solution $\rho_\eta^\varepsilon$ of the form \eqref{E:PM_ee12}, we consider an asymptotic expansion of $\rho^\varepsilon$ as in the case of a flat stationary thin domain (see e.g. \cite[Section 15.1]{MaNaPl00_02}) but in a slightly simplified form: we assume that $\rho^\varepsilon$ and $f^\varepsilon$ are of the form
\begin{align} \label{E:FD_Asymp}
  \begin{aligned}
    \rho^\varepsilon(x,t) &= \sum_{k=0}^\infty\varepsilon^k\eta_k(\pi(x,t),t,\varepsilon^{-1}d(x,t)), \\
    f^\varepsilon(x,t) &= f(\pi(x,t),t)+O(\varepsilon),
  \end{aligned}
\end{align}
where the functions $\eta_k(y,t,r)$ are independent of $\varepsilon$ and $f(y,t)$ is a function of $(y,t)\in S_T$ independent of $\varepsilon$.
Our aim is to give $\eta_0$, $\eta_1$, and $\eta_2$ for which the right-hand side of \eqref{E:FD_Asymp} satisfies \eqref{E:Heat_MTD} approximately of order $\varepsilon$ in $Q_{\varepsilon,T}$ and $\varepsilon^2$ on $\partial_\ell Q_{\varepsilon,T}$.
In what follows, we set $y=\pi(x,t)$ for $(x,t)\in\overline{Q_{\varepsilon,T}}$ and suppress the arguments $y$ and $t$ of $\eta_k(y,t,r)$ and functions on $\overline{S_T}$.

First we consider the heat equation in $Q_{\varepsilon,T}$.
We see by \eqref{E:FD_dt} that
\begin{align*}
  \partial_t\rho^\varepsilon(x,t) = -\varepsilon^{-1}V_\Gamma\partial_r\eta_0(r)+\partial^\circ\eta_0(r)-V_\Gamma\partial_r\eta_1(r)+O(\varepsilon)
\end{align*}
for $(x,t)\in Q_{\varepsilon,T}$ and $r=\varepsilon^{-1}d(x,t)\in(g_0,g_1)$.
Also, by \eqref{E:FD_Lap},
\begin{align*}
  \Delta\rho^\varepsilon(x,t) &= \varepsilon^{-2}\partial_r^2\eta_0(r)+\varepsilon^{-1}\{-H\partial_r\eta_0(r)+\partial_r^2\eta_1(r)\} \\
  &\qquad -r|W|^2\partial_r\eta_0(r)+\Delta_\Gamma\eta_0(r)-H\partial_r\eta_1(r)+\partial_r^2\eta_2(r)+O(\varepsilon).
\end{align*}
We substitute these expressions for
\begin{align*}
  \partial_t\rho^\varepsilon(x,t)-k_d\Delta\rho^\varepsilon(x,t) = f^\varepsilon(x,t) = f+O(\varepsilon), \quad (x,t)\in Q_{\varepsilon,T}.
\end{align*}
Then, for $r\in(g_0,g_1)$, we have
\begin{align*}
  &\varepsilon^{-2}\{-k_d\partial_r^2\eta_0(r)\}+\varepsilon^{-1}\{-V_\Gamma\partial_r\eta_0(r)+k_dH\partial_r\eta_0(r)-k_d\partial_r^2\eta_1(r)\} \\
  &\qquad +\partial^\circ\eta_0(r)-V_\Gamma\partial_r\eta_1(r)+k_dr|W|^2\partial_r\eta_0(r) \\
  &\qquad -k_d\Delta_\Gamma\eta_0(r)+k_dH\partial_r\eta_1(r)-k_d\partial_r^2\eta_2(r) = f+O(\varepsilon).
\end{align*}
Since each function in this equation is independent of $\varepsilon$, it follows that
\begin{align}
  -k_d\partial_r^2\eta_0(r) &= 0, \label{E:FD_e0_In} \\
  -V_\Gamma\partial_r\eta_0(r)+k_dH\partial_r\eta_0(r)-k_d\partial_r^2\eta_1(r) &= 0, \label{E:FD_e1_In}
\end{align}
and
\begin{align} \label{E:FD_e2_In}
  \begin{aligned}
    &\partial^\circ\eta_0(r)-V_\Gamma\partial_r\eta_1(r)+k_dr|W|^2\partial_r\eta_0(r) \\
    &\qquad -k_d\Delta_\Gamma\eta_0(r)+k_dH\partial_r\eta_1(r)-k_d\partial_r^2\eta_2(r) = f
  \end{aligned}
\end{align}
for $r\in(g_0,g_1)$.

Next we deal with the boundary condition on $\partial_\ell Q_{\varepsilon,T}$.
For $t\in(0,T]$ and $i=0,1$, let $x\in\Gamma_\varepsilon^i(t)$ and $r=\varepsilon^{-1}d(x,t)=g_i$.
Since $\nu_\varepsilon$ and $V_\varepsilon$ are of the form \eqref{E:MTD_UON} and \eqref{E:MTD_ONV}, the boundary condition is equivalent to
\begin{align} \label{E:FD_Bdry}
  \bigl[(\bar{\nu}-\varepsilon\bar{\tau}_\varepsilon^i)\cdot\nabla\rho^\varepsilon\bigr](x,t)+k_d^{-1}\left[\Bigl(\overline{V_\Gamma}+\varepsilon\,\overline{\partial^\circ g_i}+\varepsilon^2\bar{g}_i\bar{\tau}_\varepsilon^i\cdot\overline{\nabla_\Gamma V_\Gamma}\Bigr)\rho^\varepsilon\right](x,t) = 0.
\end{align}
Here $\tau_\varepsilon^i$ is given by \eqref{E:TD_AuxVec} and thus
\begin{align*}
  \bar{\tau}_\varepsilon^i(x,t) = (I_n-\varepsilon g_iW)^{-1}\nabla_\Gamma g_i = \nabla_\Gamma g_i+\varepsilon g_iW\nabla_\Gamma g_i+O(\varepsilon^2)
\end{align*}
by the Neumann series expression.
Also, we see by \eqref{E:FD_Gr} that
\begin{align*}
  \nabla\rho^\varepsilon(x,t) &= \varepsilon^{-1}\partial_r\eta_0(r)\nu+\nabla_\Gamma\eta_0(r)+\partial_r\eta_1(r)\nu \\
  &\qquad +\varepsilon\{rW\nabla_\Gamma\eta_0(r)+\nabla_\Gamma\eta_1(r)+\partial_r\eta_2(r)\nu\}+O(\varepsilon^2).
\end{align*}
By the above expressions, $|\nu|^2=1$, $W^T\nu=W\nu=0$, and $\nu\cdot\nabla_\Gamma=0$, we get
\begin{align*}
  \bigl[(\bar{\nu}-\varepsilon\bar{\tau}_\varepsilon^i)\cdot\nabla\rho^\varepsilon\bigr](x,t) &= \varepsilon^{-1}\partial_r\eta_0(r)+\partial_r\eta_1(r) \\
  &\qquad +\varepsilon\{\partial_r\eta_2(r)-\nabla_\Gamma g_i\cdot\nabla_\Gamma\eta_0(r)\}+O(\varepsilon^2).
\end{align*}
Note that $\bar{\tau}_\varepsilon^i$ in the left-hand side is multiplied by $\varepsilon$.
We also have
\begin{align*}
  &\left[\Bigl(\overline{V_\Gamma}+\varepsilon\,\overline{\partial^\circ g_i}+\varepsilon^2\bar{g}_i\bar{\tau}_\varepsilon^i\cdot\overline{\nabla_\Gamma V_\Gamma}\Bigr)\rho^\varepsilon\right](x,t) \\
  &\qquad = V_\Gamma\eta_0(r)+\varepsilon\{(\partial^\circ g_i)\eta_0(r)+V_\Gamma\eta_1(r)\}+O(\varepsilon^2),
\end{align*}
and we substitute the above expressions for \eqref{E:FD_Bdry} to obtain
\begin{align*}
  &\varepsilon^{-1}\partial_r\eta_0(r)+\partial_r\eta_1(r)+k_d^{-1}V_\Gamma\eta_0(r) \\
  &\qquad +\varepsilon\{\partial_r\eta_2(r)-\nabla_\Gamma g_i\cdot\nabla_\Gamma\eta_0(r)+k_d^{-1}(\partial^\circ g_i)\eta_0(r)+k_d^{-1}V_\Gamma\eta_1(r)\} = O(\varepsilon^2).
\end{align*}
Since each function in the left-hand side is independent of $\varepsilon$, we find that
\begin{align}
  \partial_r\eta_0(r) &= 0, \label{E:FD_e0_Bo} \\
  \partial_r\eta_1(r)+k_d^{-1}V_\Gamma\eta_0(r) &= 0, \label{E:FD_e1_Bo}
\end{align}
and
\begin{align} \label{E:FD_e2_Bo}
  \partial_r\eta_2(r)-\nabla_\Gamma g_i\cdot\nabla_\Gamma\eta_0(r)+k_d^{-1}(\partial^\circ g_i)\eta_0(r)+k_d^{-1}V_\Gamma\eta_1(r) = 0
\end{align}
for $r=g_i$ with $i=0,1$ (recall that we suppress the arguments $y$ and $t$).

Now let us determine $\eta_0$, $\eta_1$, and $\eta_2$.
By \eqref{E:FD_e0_In} and \eqref{E:FD_e0_Bo}, we see that
\begin{align} \label{E:FD_e0_Eq}
  \partial_r^2\eta_0(r) = 0, \quad r\in(g_0,g_1), \quad \partial_r\eta_0(g_i) = 0, \quad i=0,1.
\end{align}
Hence $\partial_r\eta_0(r)=0$ for $r\in[g_0,g_1]$ and $\eta_0(r)=\eta_0$ is independent of $r$.
Note that at this point we do not know what equation $\eta_0$ should satisfy on $S_T$.
Next we have
\begin{align*}
  \partial_r^2\eta_1(r) = 0, \quad r\in(g_0,g_1), \quad \partial_r\eta_1(g_i) = -k_d^{-1}V_\Gamma\eta_0, \quad i=0,1
\end{align*}
by \eqref{E:FD_e1_In}, \eqref{E:FD_e1_Bo}, and $\eta_0(r)=\eta_0$.
Hence
\begin{align*}
  \partial_r\eta_1(r) = -k_d^{-1}V_\Gamma\eta_0, \quad \eta_1(r) = \eta_1(g_0)-k_d^{-1}(r-g_0)V_\Gamma\eta_0, \quad r\in[g_0,g_1].
\end{align*}
Here we may choose $\eta_1(g_0)$ arbitrarily since we only consider the approximation of \eqref{E:Heat_MTD} of order $\varepsilon$ in $Q_{\varepsilon,T}$ and $\varepsilon^2$ on $\partial_\ell Q_{\varepsilon,T}$.
We take $\eta_1(g_0)=-k_d^{-1}g_0V_\Gamma\eta_0$ so that
\begin{align} \label{E:FD_eta1}
  \eta_1(r) = -k_d^{-1}rV_\Gamma\eta_0, \quad r\in[g_0,g_1].
\end{align}
However, if we require a higher order approximation of \eqref{E:Heat_MTD}, then $\eta_1(g_0)$ should be determined by a higher order asymptotic expansion of \eqref{E:Heat_MTD}.
For $\eta_2$, we have
\begin{align} \label{E:FD_e2_Eq}
  \left\{
  \begin{aligned}
    \partial_r^2\eta_2(r) &= k_d^{-1}\partial^\circ\eta_0+k_d^{-2}V_\Gamma^2\eta_0-\Delta_\Gamma\eta_0-k_d^{-1}V_\Gamma H\eta_0-k_d^{-1}f, \quad r\in(g_0,g_1), \\
    \partial_r\eta_2(g_i) &= \nabla_\Gamma g_i\cdot\nabla_\Gamma\eta_0-k_d^{-1}(\partial^\circ g_i)\eta_0+k_d^{-2}g_iV_\Gamma^2\eta_0, \quad i=0,1
  \end{aligned}
  \right.
\end{align}
by \eqref{E:FD_e2_In}, \eqref{E:FD_e2_Bo}, $\eta_0(r)=\eta_0$, and \eqref{E:FD_eta1}.
In order that $\eta_2$ exists, it is necessary that
\begin{align*}
  \int_{g_0}^{g_1}\partial_r^2\eta_2(r)\,dr = \partial_r\eta_2(g_1)-\partial_r\eta_2(g_0).
\end{align*}
Then, noting that the right-hand side of the first equation of \eqref{E:FD_e2_Eq} is independent of $r$, we substitute \eqref{E:FD_e2_Eq} for the above equality and use $g_1-g_0=g$ to get
\begin{align} \label{E:FD_e2_Cond}
  \begin{aligned}
    &g\{k_d^{-1}\partial^\circ\eta_0+k_d^{-2}V_\Gamma^2\eta_0-\Delta_\Gamma\eta_0-k_d^{-1}V_\Gamma H\eta_0-k_d^{-1}f\} \\
    &\qquad = \nabla_\Gamma g\cdot\nabla_\Gamma\eta_0-k_d^{-1}(\partial^\circ g)\eta_0+k_d^{-2}gV_\Gamma^2\eta_0,
  \end{aligned}
\end{align}
and this equation can be rewritten as
\begin{align} \label{E:FD_e0_Lim}
  \partial^\circ(g\eta_0)-gV_\Gamma H\eta_0-k_d\,\mathrm{div}_\Gamma(g\nabla_\Gamma\eta_0) = gf.
\end{align}
Hence we obtain the limit equation \eqref{E:Limit} as the necessary condition on the zeroth order term $\eta_0$ in order that the second order term $\eta_2$ exists.
Moreover, when \eqref{E:FD_e2_Cond} is satisfied, we can rewrite \eqref{E:FD_e2_Eq} as
\begin{align*}
  \left\{
  \begin{aligned}
    \partial_r^2\eta_2(r) &= \frac{1}{g}\{\nabla_\Gamma g\cdot\nabla_\Gamma\eta_0-k_d^{-1}(\partial^\circ g)\eta_0+k_d^{-2}gV_\Gamma^2\eta_0\}, \quad r\in(g_0,g_1), \\
    \partial_r\eta_2(g_i) &= \nabla_\Gamma g_i\cdot\nabla_\Gamma\eta_0-k_d^{-1}(\partial^\circ g_i)\eta_0+k_d^{-2}g_iV_\Gamma^2\eta_0, \quad i=0,1.
  \end{aligned}
  \right.
\end{align*}
Then, setting
\begin{align*}
  \zeta_i = \frac{1}{g}\{\nabla_\Gamma g_i\cdot\nabla_\Gamma\eta_0-k_d^{-1}(\partial^\circ g_i)\eta_0+k_d^{-2}g_iV_\Gamma^2\eta_0\}, \quad i=0,1,
\end{align*}
we see that the above problem is of the form
\begin{align*}
  \partial_r^2\eta_2(r) = \zeta_1-\zeta_0, \quad r\in(g_0,g_1), \quad \partial_r\eta_2(g_i) = g\zeta_i, \quad i=0,1.
\end{align*}
Hence $\partial_r\eta_2(r)=g\zeta_0+(r-g_0)(\zeta_1-\zeta_0)$ for $r\in[g_0,g_1]$ and
\begin{align*}
  \eta_2(r) &= \eta_2(g_0)+(r-g_0)g\zeta_0+\frac{1}{2}(r-g_0)^2(\zeta_1-\zeta_0) \\
  &= \left\{\eta_2(g_0)-g_0g\zeta_0+\frac{1}{2}g_0^2(\zeta_1-\zeta_0)\right\}+r\{g\zeta_0-g_0(\zeta_1-\zeta_0)\}+\frac{1}{2}r^2(\zeta_1-\zeta_0)
\end{align*}
for $r\in[g_0,g_1]$.
Moreover, as in the case of $\eta_1(g_0)$, we may take
\begin{align*}
  \eta_2(g_0) = g_0g\zeta_0-\frac{1}{2}g_0^2(\zeta_1-\zeta_0).
\end{align*}
Hence, noting that $g\zeta_0-g_0(\zeta_1-\zeta_0)=g_1\zeta_0-g_0\zeta_1$, we obtain
\begin{align} \label{E:FD_eta2}
  \eta_2(r) = r(g_1\zeta_0-g_0\zeta_1)+\frac{1}{2}r^2(\zeta_1-\zeta_0), \quad r\in[g_0,g_1].
\end{align}
Finally, we conclude by \eqref{E:FD_Asymp}, \eqref{E:FD_e0_Eq}, \eqref{E:FD_eta1}, \eqref{E:FD_e0_Lim}, and \eqref{E:FD_eta2} that if we set
\begin{align*}
  \rho_\eta^\varepsilon(x,t) = \sum_{k=0}^2\varepsilon^k\eta_k(\pi(x,t),r,\varepsilon^{-1}d(x,t)), \quad (x,t)\in\overline{Q_{\varepsilon,T}},
\end{align*}
where $\eta_0(y,t,r)=\eta_0(y,t)$ is independent of $r$ and satisfies \eqref{E:Limit}, then $\rho_\eta^\varepsilon$ is of the form \eqref{E:PM_ee12} and satisfies \eqref{E:Heat_MTD} approximately of order $\varepsilon$ in $Q_{\varepsilon,T}$ and $\varepsilon^2$ on $\partial_\ell Q_{\varepsilon,T}$.

\bibliographystyle{abbrv}
\bibliography{HMTD_C0}

\end{document}